\newtheorem{theorem}{Theorem}
\theoremstyle{definition}
\newtheorem{definition}{Definition}[section]
\theoremstyle{example}
\newtheorem{example}{Example}[section]
\let\Oldsection\section
\renewcommand{\section}{\FloatBarrier\Oldsection}
\let\Oldsubsection\subsection
\renewcommand{\subsection}{\FloatBarrier\Oldsubsection}
\let\Oldsubsubsection\subsubsection
\renewcommand{\subsubsection}{\FloatBarrier\Oldsubsubsection}
\title{
Finite Atomized Semilattices }
\author[1,2]{Fernando Martin-Maroto}
\author[1,2]{Gonzalo G. de Polavieja}
\affil[1]{\small Champalimaud Research, Champalimaud Centre for the Unknown, Lisbon, Portugal} \affil[2]{\small Algebraic AI, Madrid, Spain }
\begin{document}
\maketitle
\begin{abstract}
We show that every finite semilattice can be represented as an atomized semilattice, an algebraic structure with additional elements (atoms) that extend the semilattice's partial order. Each atom maps to one subdirectly irreducible component, and the set of atoms forms a hypergraph that fully defines the semilattice. An atomization always exists and is unique up to ``redundant atoms''. Atomized semilattices are representations that can be used as computational tools for building semilattice models from sentences, as well as building its subalgebras and products. Atomized semilattices can be applied to machine learning and to the study of semantic embeddings into algebras with idempotent operators.   
\end{abstract}
\newpage

\tableofcontents

\newpage

\section{Introduction} \label{intro}

Elements in a semilattices, just like in Boolean algebras, can be represented as sets \cite{Stone}. We leverage this fact to extend the semilattice to an algebra of two sorts, the atomized semilattice, which is well suited as a computational tool to introduce additional properties in models as well as to carry out selected properties from model to model.
 
Our main results are the concept of atomization, redundancy and the full crossing construction which allows building semilattices from a set of positive atomic sentences. Special attention is paid to relative freedom between models as it may play a role in the field of machine learning. We provide a few examples to illustrate the power of the atomized semilattice and explore its most basic properties.  

Although this manuscript does not explicitly discuss machine learning, it is meant as a reference for researchers interested in the mathematics of algebraic machine learning \cite{Maroto}. We deal here with the purely algebraic aspects of the atomised semilattice construction, which may also be of interest to algebraists.

In section \ref{Description} we give an axiomatization for atomized semilattices, introduce the concepts of redundant atom and discriminant atom and also extend the semilattice homomorphism to atomized semilattices. We discuss some representational choices that endow the elements of an atomized semilattice, including atoms, with the "universal" property that allows to identify them across every model generated by the same constants.

In section \ref{freedomFullCrossingAndRedundacy} we start by introducing the notion of relative algebraic freedom (definition \ref{freerModelDefinition}). We prove that redundant atoms can be discarded (theorem \ref{redundantAtom}) and that finite semilattices can be represented with a unique atomization without redundant atoms (theorem \ref{uniqueAtomization}). We continue with the central concept of full crossing that permits building the freest model of a set of atomic sentences (theorem \ref{fullCrossingIsFreestTheorem}). 

Section \ref{unionJoinRestrictionSubalgebraAndProducts} is a selection of results. We show that some subalgebras can be constructed directly from the restriction of the atoms (theorem \ref{restrictionLemma}), and later show how to calculate any subalgebra in a simple way (theorem \ref{subalgebraTheorem}). In theorem \ref{construibleSubmodelTheorem} we extend the link (introduced in theorem \ref{unionWithFreer}) between freedom and redundancy. We explain how to build congruences using full crossing (theorem \ref{congruenceFromFullCrossing}) and use the congruences to build join models, subalgebras and products. We provide a method to calculate join models, which are the result of gluing together various semilattice models (see theorem \ref{sumOfModelsWithIntersection}), and discuss when it is possible to find embeddings from two semilattices into their join model (theorem \ref{embeddingInJoinModelTheorem}). Products of models are considered in theorem \ref{productModelNoIntersectTheorem}. Finally, the connection between the atoms and the irreducible components of a subdirect product (theorem \ref{modelAsASubdirectProduct}) is made explicit.

\section{Preliminaries} \label{Description}

A semilattice \cite{Burris} is an algebra with a single binary operation $\odot$ that is commutative, associative and idempotent. The idempotent operator induces a relationship of (partial) order. We say $a \leq b$ in a semilattice $M$ if and only if $M \models (b = b \odot a)$. The elements of a semilattice form a partially ordered set. We often refer to semilattices as semilattice models or, simply, models. 

A semilattice model ``over a set of constants $C$'' is assumed to be generated by its constants, i.e. every element can be found as an idempotent summation of constants. We only deal here with finite models so we assume $C$ is a finite set.  In order to get a model generated by its constants, if an element $b$ of a semilattice model is not equal to any idempotent summation of constants then it is possible to make $b$ a constant. Without loss of generality we assume in this text that every finite semilattice is generated by its constants.

Given a set $C$ of constants, the free algebra \cite{Burris} $F_C(\emptyset)$ contains a different element for each constant in $C$ and for each idempotent summation of constants in $C$, where two summations are considered equal if and only if they can be proved equal using the commutative, associative and idempotent properties of $\odot$. From these three properties follows easily that the cardinality of the freest model (the free algebra) is $2^{\vert C\vert} - 1$.  $F_C(\emptyset)$ is the well-know term algebra. The elements of $F_C(\emptyset)$ are called ``terms''. 

There is always a natural homomorphism \cite{Burris} $\nu_M$ from the term algebra $F_C(\emptyset)$ onto any model $M$ that maps terms to elements of $M$. The inverse image of the natural homomorphism $\nu_M : F_C(\emptyset) \rightarrow M$ partitions the terms over $C$ into as many equivalence classes as elements are in $M$.  We can see the terms in each equivalence class of $F_C(\emptyset)$ as an alternative name for the same element of $M$. Each semilattice model over $C$ is a partition of the set $F_C(\emptyset)$. Since the elements of a semilattice are equivalence classes defined in $F_C(\emptyset)$, every semilattice over $C$ have fewer than $2^{\vert C\vert} - 1$ elements. 

A potential source of confusion is that we do not give names to elements of models; instead, we give names to terms, i.e. we give names to elements of $F_C(\emptyset)$, names such as $t$ or $s$ for which it may happen in some model $M$ that $M \models (\nu_M(t) = \nu_M(s))$ or, simply, $M \models (t = s)$ as we do not explicitly write the homomorphism either. The reason for this unusual choice is that by using terms we can refer to the ``same element'' in different models. Atomized semilattices were introduced in the context of machine learning, where we do not stay working with the same model for long, instead, when new data challenges the model, the model gets replaced by another one. Using names for elements of particular models would force us to build explicit mappings when models are replaced and that turns out to be an unnecessary complication. 

A ``duple'' $r=(r_L,r_R)$ is defined in this text as an ordered pair of elements of $F_C(\emptyset)$ (terms). Positive and negative duples, written $r^{+}$ and $r^{-}$, are used as follows: we say $M \models r^{+}$ if and only if $M \models  (\nu_M(r_L) \leq \nu_M(r_R))$  and we say $M \models r^{-}$ if and only if $M \not\models (\nu_M(r_L) \leq \nu_M(r_R))$ or, without writing natural map symbol $M \models  (r_L \leq r_R)$. 

A semilattice model $M$ is atomized when it is extended with a set of additional elements we call ``atoms''. Each semilattice is a partial order and an atomization of the semilattice is an extension of this partial order. However, an atomized semilattice is not a semilattice extension; in atomized semilattices the idempotent operator is defined exclusively for regular elements while the order relation is defined for all, atoms and regular elements. If $a$ and $b$ are regular elements and $\phi$ is an atom, we can say $a \odot b$ and $\phi < a$ but not $a \odot \phi$. The regular elements of an atomized semilattice form a semilattice. All the elements of an atomized semilattice form a partially ordered set.
\bigskip
\bigskip

\begin{definition} \label{finiteAomizedSemilattice}   An atomized semilattice over a set of constants $C$ is a structure $M$ with elements of two sorts, the regular elements $\{a, b, c,...\}$ and the atoms $\{\phi, \psi,...\}$, with an idempotent, commutative and associative binary operator $\odot$ defined for regular elements and a partial order relation $<$ (i.e. a binary, reflexive, antisymmetric and transitive relation) that is defined for all the elements, such that the regular elements are either constants or idempotent summations of constants, and $M$ satisfies the axioms of the operations and the additional:
\[
\forall \phi \exists c : (c \in C) \wedge (\phi < c), \tag{AS1} 
\]\[
\forall \phi \forall a \,(a \not\leq \phi),  \tag{AS2} 
\]\[
\forall a \forall b \, (  a \leq b \,  \Leftrightarrow  \neg \exists \phi : ( (\phi < a)  \wedge   (\phi \not< b))),   \tag{AS3} 
\]\[
\forall \phi  \forall a \forall b  \,  (\phi < a \odot b \, \Leftrightarrow  (\phi < a) \vee (\phi < b)),   \tag{AS4} 
\]\[
\forall c \in C \,\, ((\phi < c) \Leftrightarrow (\psi < c)) \,  \Rightarrow (\phi = \psi),   \tag{AS5} 
\]\[
\forall a \exists \phi : (\phi < a).   \tag{AS6} 
\]
\end{definition} 
\bigskip

We use Greek letters to represent atoms and Latin letters to represent regular elements. Constants in $C$ are considered regular elements. We use ${\bf{C}}(t)$ for the component constants of a term $t$ (the constants mentioned in the term), and ${\bf{L}}^a_M(b)$ for the ``lower atomic segment'' of an element $b$ in the model $M$ which is the set of the atoms $\phi$ that satisfy $M \models (\phi < b)$ (see section \ref{Notation} for notation and definitions). The ``upper constant segment" ${\bf{U}}^c(\phi)$ of an atom $\phi$ is the set of constants $c \in C$ that satisfy $\phi < c$ in the model $M$. We drop the subindex $M$ and write ${\bf{U}}^c(\phi)$ instead of ${\bf{U}}^c_M(\phi)$ for reasons that will become apparent soon.
 
\bigskip
We are going to show first that the partial order of the atomized semilattice coincides with that of the semilattice spawned by its regular elements:  \[
\forall a \forall b \, (  a \leq b \,  \Leftrightarrow \, a \odot b = b).   \tag{AS3b} 
\] 
\bigskip

\begin{theorem} \label{axiomatizationProerties}  Assume AS4 and the antisymmetry of the order relation. \\
i) AS3 implies AS3b. \\
ii) Assume $\,\forall a \forall b  (\forall \phi ((\phi < a) \Leftrightarrow (\phi < b)) \, \Rightarrow (a = b))$. Then $AS3b \Rightarrow AS3.$  \\
iii) AS3 implies $\,\forall a \forall b  (\forall \phi ((\phi < a) \Leftrightarrow (\phi < b)) \, \Rightarrow (a = b))$.
\end{theorem}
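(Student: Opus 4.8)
The plan is to convert all three implications into elementary manipulations of the ``lower atomic segments'' ${\bf{L}}^a_M(a)=\{\phi:\phi<a\}$; for brevity write $L(a)$ for this set. Two reformulations drive everything. First, the only content of AS4 that I need is that it says precisely
\[
L(a\odot b)=L(a)\cup L(b)\qquad\text{for all regular }a,b .
\]
Second, pushing the quantifier over atoms inward, AS3 says exactly $a\le b\Leftrightarrow L(a)\subseteq L(b)$, while the separation hypothesis occurring in (ii) and as the conclusion of (iii) is exactly ``$L(a)=L(b)\Rightarrow a=b$''. With these in hand, antisymmetry of the order relation does the rest. I would dispatch (iii) first, since it is reused in (i): assuming AS3, if $L(a)=L(b)$ then both $L(a)\subseteq L(b)$ and $L(b)\subseteq L(a)$ hold, so AS3 gives $a\le b$ and $b\le a$, and antisymmetry gives $a=b$.

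For (i), assume AS3. If $a\odot b=b$ then $L(a)\cup L(b)=L(a\odot b)=L(b)$ by AS4, so $L(a)\subseteq L(b)$, so $a\le b$ by AS3. Conversely, if $a\le b$ then $L(a)\subseteq L(b)$ by AS3, whence $L(a\odot b)=L(a)\cup L(b)=L(b)$ by AS4; by part (iii) this forces $a\odot b=b$. (To keep (i) logically independent of (iii), one can instead note that $L(a\odot b)=L(b)$ together with AS3 gives $a\odot b\le b$ and $b\le a\odot b$, and then apply antisymmetry.) Either way, AS3b holds.

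For (ii), assume AS3b together with the separation hypothesis. If $a\le b$ then AS3b gives $a\odot b=b$, hence $L(a)\subseteq L(a)\cup L(b)=L(a\odot b)=L(b)$. Conversely, if $L(a)\subseteq L(b)$ then $L(a\odot b)=L(a)\cup L(b)=L(b)$ by AS4, so $a\odot b=b$ by the separation hypothesis, and then $a\le b$ by AS3b. Hence $a\le b\Leftrightarrow L(a)\subseteq L(b)$, which is AS3.

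The argument is essentially bookkeeping once the reformulations are made; the only point that needs genuine attention — the ``main obstacle'', such as it is — is the forward direction of (i), where one must pass from $L(a\odot b)=L(b)$ to $a\odot b=b$ without invoking separation, since separation is \emph{not} a hypothesis of (i). This is why (iii) (or the equivalent two-inclusions-plus-antisymmetry remark) is needed there. Beyond that, the care required is simply to keep straight which of AS3, AS3b, and the separation property is available in each part, since the three are logically intertwined.
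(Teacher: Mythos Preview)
Your proof is correct and follows essentially the same route as the paper: the paper also unpacks AS3 and AS4 into statements about lower atomic segments, handles the forward direction of (i) by deriving $L(a\odot b)=L(b)$, applying AS3 in both directions to get $a\odot b\le b$ and $b\le a\odot b$, and then invoking antisymmetry (exactly your parenthetical alternative), and it proves (ii) and (iii) by the same elementary manipulations you give. Your set-theoretic repackaging via $L(\cdot)$ is a slight notational improvement, and your explicit flagging of why separation cannot be assumed in (i) is a point the paper leaves implicit.
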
 
\begin{proof}
(i) $AS3 \Rightarrow AS3b$: Assume $a \leq b$. AS3 implies $\neg \exists \phi:((\phi < a)  \wedge  (\phi \not< b))$ and then $(\phi < a) \vee (\phi < b) \Leftrightarrow  (\phi < b)$. From here, using AS4, follows $(\phi < a \odot b) \Leftrightarrow (\phi < b)$, and we can use AS3 (right to left this time) to get $(b \leq a \odot b)  \wedge (a \odot b \leq b)$, and from the antisymmetry of the order relation, i.e. $(x = y) \Leftrightarrow (x \leq y) \wedge (y \leq x)$, we obtain $a = a \odot b$. \\
Assume $a \odot b = b$. Then $(\phi < a \odot b) \Leftrightarrow (\phi < b)$, and using AS4: $(\phi < a) \vee (\phi < b) \Leftrightarrow  (\phi < b)$ which implies $\neg \exists \phi : ((\phi < a)  \wedge   (\phi \not< b))$ from which, using AS3, we get $a \leq b$.  \\
(ii) $AS3b \Rightarrow AS3$: Assume $a \leq b$. AS3b implies $a \odot b = b$. From AS4 we get that $(\phi < a) \vee (\phi < b) \Leftrightarrow  (\phi < a \odot b = b)$, so $\phi < a$ implies $\phi < b$ and then $\neg \exists \phi : ((\phi < a)  \wedge   (\phi \not< b))$. \\
Assume now that $\neg \exists \phi : ((\phi < a)  \wedge   (\phi \not< b))$. Then $(\phi < a) \vee (\phi < b) \Leftrightarrow  (\phi < b)$ and AS4 leads to $(\phi < a \odot b) \Leftrightarrow  (\phi < b)$. However, we cannot go from here to $a \leq b$ unless we add an axiom such as: $\forall \phi ((\phi < a) \Leftrightarrow (\phi < b)) \, \Rightarrow (a = b)$.  \\
(iii) From $\forall \phi ((\phi < a) \Leftrightarrow (\phi < b))$ follows, using AS3, that $(a \leq b) \wedge (b \leq a)$, and by the antisymmetry of the order relation we get $a = b$.
\end{proof}
\bigskip
\bigskip

From theorem \ref{axiomatizationProerties}, AS3 and AS3b are equivalent modulus AS4 if the additional axiom: \[
\forall a \forall b  (\forall \phi ((\phi < a) \Leftrightarrow (\phi < b)) \, \Rightarrow (a = b)),
\]
is introduced. Since this additional axiom follows from AS3 and the antisymmetry of the order relation, we thus have two equivalent options: AS3b and the additional axiom or AS3 alone, and we choose the second option for economy of axioms.

\bigskip
\bigskip
\begin{theorem} \label{atomicSegmentFromTermTheorem}
Let $\,t,s \in F_C(\emptyset)$ be two terms that represent two regular elements $\nu_M(t)$ and $\nu_M(s)$ of an atomized model $M$ over a finite set of constants $C$. Let $\phi$ be an atom, $c$ a constant in $C$ and let $a$ be a regular element of $M$: \\
i) $\forall  t \forall c (c \in {\bf{C}}(t)  \, \Rightarrow \,  \nu_M(c) \leq \nu_M(t))$,\\
ii) $\phi < \nu_M(t) \, \Leftrightarrow  \exists  c: ((c \in {\bf{C}}(t)) \wedge  (\phi < \nu_M(c)))$,\\
iii) $(\phi < a) \, \Leftrightarrow  \exists  c : ((c \in C) \wedge (\phi < \nu_M(c) \leq a))$,\\   
iv) ${\bf{L}}^a_M(\nu_M(t))  = \{ \phi \in M: {\bf{C}}(t) \cap {\bf{U}}^c(\phi) \not= \emptyset \},$\\
v) ${\bf{L}}^a_M(\nu_M(s) \odot \nu_M(t)) \, = \,  {\bf{L}}^a_M(\nu_M(t)) \cup {\bf{L}}^a_M(\nu_M(s))$,\\
vi) $\nu_M(t) \leq \nu_M(s) \, \Leftrightarrow \,  {\bf{L}}^a_M(\nu_M(t))  \subseteq {\bf{L}}^a_M(\nu_M(s)).$
\end{theorem}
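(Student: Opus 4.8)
The plan is to prove part (ii) first, by structural induction on the term $t\in F_C(\emptyset)$, and then to read off the remaining parts from (ii) together with the axioms AS3, AS4 and the elementary semilattice identities. For the base case $t=c$ with $c\in C$ we have ${\bf{C}}(t)=\{c\}$ and $\nu_M(t)=\nu_M(c)$, so both sides of the equivalence literally say $\phi<\nu_M(c)$. For the inductive step, write $t=t_1\odot t_2$; since $\nu_M$ is a homomorphism, $\nu_M(t)=\nu_M(t_1)\odot\nu_M(t_2)$, and AS4 gives $\phi<\nu_M(t)\Leftrightarrow(\phi<\nu_M(t_1))\vee(\phi<\nu_M(t_2))$. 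Applying the induction hypothesis to $t_1$ and $t_2$ and using ${\bf{C}}(t)={\bf{C}}(t_1)\cup{\bf{C}}(t_2)$ turns the right-hand side into $\exists c\in{\bf{C}}(t):\phi<\nu_M(c)$, which is what is wanted.

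For part (i), note that in any semilattice $a\odot(a\odot b)=a\odot b$ by associativity and idempotency, hence $a\le a\odot b$ by AS3b (available from Theorem \ref{axiomatizationProerties}); a short induction on the structure of $t$, using that $\nu_M$ preserves $\odot$ and therefore the order, then yields $\nu_M(c)\le\nu_M(t)$ for every $c\in{\bf{C}}(t)$. For part (iii), since $M$ is generated by its constants we may write $a=\nu_M(t)$ for some term $t$ with ${\bf{C}}(t)\subseteq C$; the left-to-right implication is then (ii) combined with (i), which supplies $\nu_M(c)\le a$ for the witnessing constant $c$, and for the converse, assuming $\phi<\nu_M(c)\le a$ and applying AS3 from right to left to $\nu_M(c)\le a$ gives $\neg\exists\psi((\psi<\nu_M(c))\wedge(\psi\not<a))$, so instantiating $\psi:=\phi$ forces $\phi<a$.

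Parts (iv)--(vi) are essentially reformulations. For (iv), the right-hand side of (ii) says exactly that ${\bf{C}}(t)\cap{\bf{U}}^c(\phi)\neq\emptyset$, by the definition ${\bf{U}}^c(\phi)=\{c\in C:\phi<\nu_M(c)\}$. For (v), one can either invoke AS4 directly (after $\nu_M(s)\odot\nu_M(t)=\nu_M(s\odot t)$) or apply (iv) to $s\odot t$ together with ${\bf{C}}(s\odot t)={\bf{C}}(s)\cup{\bf{C}}(t)$. For (vi), AS3 states $a\le b\Leftrightarrow\neg\exists\phi((\phi<a)\wedge(\phi\not<b))$, which is precisely the assertion that every atom below $a$ also lies below $b$, i.e.\ ${\bf{L}}^a_M(a)\subseteq{\bf{L}}^a_M(b)$; taking $a=\nu_M(t)$ and $b=\nu_M(s)$ gives the claim.

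The only genuine obstacle is setting up the induction in (ii) cleanly: one must use that elements of $F_C(\emptyset)$ are bona fide terms, so the decomposition $t=t_1\odot t_2$ and the identity ${\bf{C}}(t)={\bf{C}}(t_1)\cup{\bf{C}}(t_2)$ are legitimate, and one must remember that AS4 is phrased for a single binary $\odot$ and therefore has to be iterated along the parse tree of $t$. Everything after (ii) is bookkeeping with AS3 and AS4 and needs no further ideas.
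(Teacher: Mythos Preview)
Your argument is correct and follows essentially the same approach as the paper: both hinge on iterating AS4 over the component constants of $t$ to get (ii), and then read off (iii)--(vi) from (ii) together with AS3/AS4. The only cosmetic difference is ordering: the paper proves (i) first via the one-line observation that $c\in{\bf{C}}(t)$ gives $t=t\odot c$ in $F_C(\emptyset)$, hence $\nu_M(t)=\nu_M(t)\odot\nu_M(c)$, and then uses (i) plus transitivity of $<$ for the right-to-left direction of (ii), whereas you handle both directions of (ii) in a single structural induction and derive (i) separately.
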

\begin{proof}
(i) From $t = t \odot c$ and the natural homomorphism $\nu_M(t) = \nu_M(t \odot c) = \nu_M(t) \odot \nu_M(c)$ we get $\nu_M(c) \leq \nu_M(t)$.   \\
(ii) Right to left, $\phi < \nu_M(c) \leq \nu_M(t)$ follows from i and, from and the transitivity of the order relation, $\phi < \nu_M(t)$. Left to right can be proven from the fourth axiom of atomized models $\phi < a \odot b \, \Rightarrow  (\phi < a) \vee (\phi < b)$ applied to the component constants ${\bf{C}}(t)$ of $t$. The number of component constants of $t$ is at least 1 and at most $\vert C \vert$ so it is a finite number and we need to apply this axiom a finite number of times to get $\phi < \nu_M(c)$ for some component constant of $t$. This proves (ii), and (iii) is a consequence of (ii) that follows with just choosing any term $t$ of $F_C(\emptyset)$ to represent element $a = \nu_M(t)$.  \\
(iv) Consider an atom $\phi \in {\bf{L}}^a_M(\nu_M(t))$ then $\phi < \nu_M(t)$ and from proposition (ii) there is a component constant $c \in {\bf{C}}(t)$ such that $\phi < \nu_M(c)$ which means that $c \in {\bf{C}}(t) \cap {\bf{U}}^c(\phi)$. Conversely, if $c \in {\bf{C}}(t) \cap {\bf{U}}^c(\phi)$ then $\phi < \nu_M(c) \leq \nu_M(t)$ and $\phi \in {\bf{L}}^a_M(\nu_M(t))$.  \\
(v) Since $\nu_M$ is a homomorphism $\nu_M(s) \odot \nu_M(t) = \nu_M(s \odot t)$ and, using proposition (iv) ${\bf{L}}^a_M(\nu_M(s \odot t))  = \{ \phi \in M: {\bf{C}}(s \odot t) \cap {\bf{U}}^c(\phi) \not= \emptyset \} = \{ \phi \in M: ({\bf{C}}(s) \cup {\bf{C}}(t)) \cap {\bf{U}}^c(\phi) \not= \emptyset \} = {\bf{L}}^a_M(\nu_M(t)) \cup {\bf{L}}^a_M(\nu_M(s))$. Note that this proposition is an alternative way to write axiom AS4.   \\
(vi) It is straightforward from proposition v and AS3b or AS3. 
\end{proof}	
\bigskip
\bigskip

The first axiom of atomized semilattices says that the upper constant segment of an atom is never empty while the fifth axiom says that two distinct atoms cannot have the same upper constant segment.

Axiom AS3 implies that the order relation for regular elements is encoded and fully determined by the atoms of the model. Theorem \ref{atomicSegmentFromTermTheorem} propositions (iv) to (vi) show that it suffices with knowing the constants in the upper segments of each atom in the atomized model to know the entire model. Furthermore, it is enough with knowing the atoms of a model $M$ and the component constants ${\bf{C}}(t)$ of a term to know the atoms in the lower segment of $t$ in $M$.

Theorem \ref{atomicSegmentFromTermTheorem} proposition (v) proves the linearity property: \[
{\bf{L}}^a_M(s \odot t) \, = \,  {\bf{L}}^a_M(t) \cup {\bf{L}}^a_M(s),
\]
where we have omitted the natural homomorphism.

Theorem \ref{atomicSegmentFromTermTheorem} proposition (vi) and AS3b imply that \[ M \models (t = s) \, \Leftrightarrow \, {\bf{L}}^a_M(t) =  {\bf{L}}^a_M(s).
\] Therefore, it does not matter which term we use to represent a regular element we get the same atoms using: \[
{\bf{L}}^a_M(t) = \{ \phi \in M: {\bf{C}}(t) \cap {\bf{U}}^c(\phi) \not= \emptyset \}.
 \]
 
In order to use ${\bf{U}}^c(\phi)$ instead of ${\bf{U}}^c_M(\phi)$ we just need to assume, without loss of generality, that atoms of different atomized semilattice models that have the same upper constant segments also have the same name. With this choice an atom can be defined by a set ${\bf{U}}^c(\phi)$, independently of other atoms and independently of the model it belongs to. We can define the atoms by their upper constant segments ${\bf{U}}^c(\phi)$ as well as the models by the atoms they have, as follows:

\bigskip
\bigskip
\begin{definition} \label{atomInAsAsetDefinition}  
Consider a non-empty subset ${\bf{U}}^c(\phi)$ of a set of constants $C$. For any atomized semilattice $M$ over $C$, any set $A$ of atoms that atomizes $M$ and any constant $c$ in $C$, the atom $\phi$ defined by ${\bf{U}}^c(\phi)$ is an atom that satisfies $M \models (\phi < c)$ if and only if $\phi \in A$ and $c \in {\bf{U}}^c(\phi)$. 
\end{definition} 
\bigskip
\bigskip
Since atoms are non-empty sets of constants any atomization of a semilattice model is a hypergraph with the atoms as hyperedges and the constants as vertexes.

Axiom AS3 can be rewritten to show the connection between universally defined atoms and models as: 
\[
(t \leq_M s) \,  \Leftrightarrow  \, \neg \exists \phi: ((\phi \in M) \wedge  (\phi < t)  \wedge (\phi \not< s)).
\]
where $t$ and $s$ are terms and $t \leq_M s$ means $M \models  (t \leq s)$ and the natural homomorphism has been omitted. We add the subscript in $\leq_M$ to highlight the difference with $<$ that compares atoms and terms independently of the model. We do not use subscripts and simply write $\leq$ for $\leq_M$ when there is no ambiguity.

The next theorem permits to identify ${\bf{U}}^c(\phi)$ with the upper constant segment of $\phi$ in the freest model $F_C(\emptyset)$, as well as $\leq$ with $\leq_{F_C(\emptyset)}$.

We sometimes use $[A]$ to refer to the model atomized by a set of atoms $A$, and sometimes use the same letter $M$ to refer to both, the model and the set of atoms. 

\bigskip

\begin{theorem} \label{freestModelCanBeAtomizedByAllTheAtoms}
If $A$ is a set of atoms that atomizes $F_C(\emptyset)$ and $\psi$ is any atom, the set $A \cup \{ \psi \}$ is also an atomization for $F_C(\emptyset)$.
\end{theorem}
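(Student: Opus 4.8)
The plan is to show that atomizing $F_C(\emptyset)$ with $A\cup\{\psi\}$ instead of $A$ changes neither the partition of terms into regular elements nor the order between them, so the resulting model is again $F_C(\emptyset)$; the axioms AS1--AS6 for the enlarged atom set then follow almost immediately. I will use repeatedly that, by the universal naming convention set up just before the statement, the set ${\bf{U}}^c(\phi)$ attached to an atom $\phi$ does not depend on the model it lives in --- so in particular the only overlap between $\psi$ and $A$ can be the trivial case $\psi\in A$, where $A\cup\{\psi\}=A$ and there is nothing to prove.

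Write $N:=[A\cup\{\psi\}]$ for the atomized semilattice with atom set $A\cup\{\psi\}$, using the same symbol for its semilattice of regular elements. By Theorem \ref{atomicSegmentFromTermTheorem}(iv), for every term $t$ one has ${\bf{L}}^a_N(t)=\{\phi\in A\cup\{\psi\}:{\bf{C}}(t)\cap{\bf{U}}^c(\phi)\neq\emptyset\}$, and since ${\bf{U}}^c(\phi)$ is model-independent, intersecting with $A$ gives the key identity ${\bf{L}}^a_N(t)\cap A={\bf{L}}^a_{F_C(\emptyset)}(t)$, the lower atomic segment of $t$ in the $A$-atomization. I then compare the two order relations. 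By Theorem \ref{atomicSegmentFromTermTheorem}(vi) together with AS3b, $t\leq_N s$ holds iff ${\bf{L}}^a_N(t)\subseteq{\bf{L}}^a_N(s)$; intersecting this inclusion with $A$ and using the identity just noted yields ${\bf{L}}^a_{F_C(\emptyset)}(t)\subseteq{\bf{L}}^a_{F_C(\emptyset)}(s)$, hence $t\leq_{F_C(\emptyset)}s$. Conversely, $t\leq_{F_C(\emptyset)}s$ means ${\bf{C}}(t)\subseteq{\bf{C}}(s)$, and then Theorem \ref{atomicSegmentFromTermTheorem}(iv) gives directly ${\bf{L}}^a_N(t)\subseteq{\bf{L}}^a_N(s)$, i.e.\ $t\leq_N s$. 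Thus $\leq_N$ and $\leq_{F_C(\emptyset)}$ agree on all pairs of terms; by antisymmetry (AS3b) the induced equalities of terms coincide as well, and since a finite semilattice over $C$ is just a partition of $F_C(\emptyset)$ equipped with its order, $N=F_C(\emptyset)$.

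It remains to check that $A\cup\{\psi\}$ is a legitimate atomization, i.e.\ that AS1--AS6 hold in $N$. Axioms AS2, AS3 and AS4 are built into the construction of $[A\cup\{\psi\}]$ (AS4 being the linearity property recorded as Theorem \ref{atomicSegmentFromTermTheorem}(v)); AS1 holds because $\psi$, being an atom, has a non-empty upper constant segment ${\bf{U}}^c(\psi)$, and the atoms of $A$ already satisfied it; AS5 holds by the universal naming convention, since distinct atoms of $A\cup\{\psi\}$ carry distinct sets ${\bf{U}}^c$; and AS6 holds because $A\subseteq A\cup\{\psi\}$, so ${\bf{L}}^a_N(t)\supseteq{\bf{L}}^a_{F_C(\emptyset)}(t)\neq\emptyset$ for every non-empty term $t$, meaning every regular element of $N$ still has an atom below it.

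I expect the order comparison in the second paragraph to be the only step with real content. The inclusion $\leq_{F_C(\emptyset)}\,\subseteq\,\leq_N$ is just monotonicity of ${\bf{L}}^a$ in its term argument; the reverse inclusion is exactly where the hypothesis that $A$ \emph{already} atomizes $F_C(\emptyset)$ is used --- it is what lets one restrict ${\bf{L}}^a_N$ to $A$ and recover ${\bf{C}}$-inclusion without loss of information. Everything else is routine bookkeeping with the axioms and with the model-independence of ${\bf{U}}^c$.
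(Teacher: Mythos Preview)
Your proof is correct and follows essentially the same approach as the paper's: both arguments hinge on the characterization $t\leq_{F_C(\emptyset)} s \Leftrightarrow {\bf{C}}(t)\subseteq{\bf{C}}(s)$, observe that no single atom $\psi$ can discriminate a pair with ${\bf{C}}(t)\subseteq{\bf{C}}(s)$, and conclude that the order relation is unchanged by adjoining $\psi$. Your phrasing in terms of inclusions of lower atomic segments is equivalent to the paper's phrasing in terms of the AS3 clause $\neg\exists\phi:((\phi<t)\wedge(\phi\not<s))$; the axiom-checking is also handled the same way in both.
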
 
\begin{proof}
Let $t$, $s$ be two terms. In any semilattice model, ${\bf{C}}(t) \subseteq {\bf{C}}(s)$ implies $t \leq s$. The freest semilattice is characterized by the double implication: $t \leq s$ if and only if ${\bf{C}}(t) \subseteq {\bf{C}}(s)$.  \\
An atom defined by its non-empty set ${\bf{U}}^c(\phi)$ according to definition \ref{atomInAsAsetDefinition} necessarily satisfies the axioms AS1 and is consistent with axioms AS2, AS4 and AS5. Since $[A]$ satisfies axiom AS6 then $[A \cup \{ \psi \}]$ also satisfies axiom AS6. Consider the axiom AS3: $\forall a \forall b \, (  a \leq b \,  \Leftrightarrow  \neg \exists \phi : ( (\phi < a)  \wedge  (\phi \not< b)))$.  If ${\bf{C}}(t) \subseteq {\bf{C}}(s)$, from theorem \ref{atomicSegmentFromTermTheorem}(iv) follows that $(\psi < t) \Rightarrow (\psi < s)$. If ${\bf{C}}(t) \not\subseteq {\bf{C}}(s)$ then $F_C(\emptyset) = [A] \models (t \not< s)$ and the clause $\neg \exists \phi : ( (\phi < t)  \wedge  (\phi \not< s)))$ does not hold. In both cases, ${\bf{C}}(t) \subseteq {\bf{C}}(s)$ and ${\bf{C}}(t) \not\subseteq {\bf{C}}(s)$, if $[A]$ satisfies the clause $\neg \exists \phi : ( (\phi < t)  \wedge  (\phi \not< s)))$ then $[A \cup \{ \psi \}]$ also does, so $t \leq s$ holds in $[A \cup \{ \psi \}]$ if and only if it holds in $[A]$ and it follows that the semilattice formed by the regular elements of $[A \cup \{ \psi \}]$ is equal to $F_C(\emptyset)$.
\end{proof}	
\bigskip
\bigskip

A corollary of theorem \ref{freestModelCanBeAtomizedByAllTheAtoms} is that the set of all possible atoms over $C$ is an atomization of $F_C(\emptyset)$. We can say that every atom ``is in'' the freest semilattice $F_C(\emptyset)$ and write $\phi \in F_C(\emptyset)$ for any atom $\phi$ defined by a proper subset of $C$. We will see later that there are atomizations for the freest semilattice with fewer atoms. 

\bigskip
\begin{definition} \label{isInDefinition}  
Let $M = [A]$ be the semilattice spawned by the atoms $A$.  We say an atom $\phi$ is in $M$, written $\phi \in M$, if the semilattice formed by the regular elements of $[A \cup \{ \phi \}]$ is equal to $M$. 
\end{definition} 

\begin{definition} \label{isInForSemilatticesDefinition}  
Let $M$ be a semilattice.  We say an atom $\phi$ is in $M$, written $\phi \in M$, if there is an atomized semilattice spawned by some set of atoms that contains $\phi$ with regular elements forming a semilattice equal to $M$. 
\end{definition} 
\bigskip
\bigskip

Since an atom is universally defined by a non-empty subset of $C$, by using definition \ref{atomInAsAsetDefinition} it is possible to identify the same atom in different models and to compare the same atom with elements of different models provided that the models are generated by the same constants. In the next two theorems we compare the same atoms with different models: $F_C(\emptyset)$, $M$ and $N$.

\bigskip
\begin{theorem} \label{homomorphismTheorem}
Let $t,s \in F_C(\emptyset)$ be two terms and $M$ an atomized semilattice model. Let $\phi$ be an atom and $\nu_M : F_C(\emptyset) \rightarrow M$ the natural homomorphism.
\[
i)\,\,\, (F_C(\emptyset) \models (t \leq  s)) \,\, \Rightarrow \,\,  M \models (\nu_M(t) \leq \nu_M(s)) 
\]\[
ii)\,\,\, (\phi \in M ) \wedge (F_C(\emptyset) \models (\phi < s)) \,\, \Leftrightarrow \,\,  M \models (\phi < \nu_M(s)) 
\]
\end{theorem}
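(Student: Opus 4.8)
The plan is to handle part~(i) as the standard fact that the natural homomorphism preserves the induced order, and to reduce part~(ii) to the combinatorial description of lower atomic segments in Theorem~\ref{atomicSegmentFromTermTheorem}(iv), applied once to $M$ and once to $F_C(\emptyset)$. The guiding observation is that the relation ``$\phi < s$'' between an atom and a term depends only on ${\bf{U}}^c(\phi)$ and ${\bf{C}}(s)$, so it transports freely between models, whereas the assertion $M \models (\phi < \nu_M(s))$ also records $\phi \in M$; it is this bundled membership clause that must be matched against the ``$\phi \in M$'' conjunct on the left of~(ii).

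For part~(i): $F_C(\emptyset) \models (t \leq s)$ means, by definition of the induced order, $F_C(\emptyset) \models (s = s \odot t)$. Since $\nu_M$ is a homomorphism on the regular part, $\nu_M(s) = \nu_M(s \odot t) = \nu_M(s) \odot \nu_M(t)$ in $M$, and AS3b (established in Theorem~\ref{axiomatizationProerties}) rewrites this equality as $\nu_M(t) \leq \nu_M(s)$. There is no difficulty here.

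For part~(ii) I would prove the two implications separately, each by unwinding definitions through Theorem~\ref{atomicSegmentFromTermTheorem}(iv). For $\Leftarrow$: assume $\phi \in M$ and $F_C(\emptyset) \models (\phi < s)$. Every atom is in $F_C(\emptyset)$ (the corollary of Theorem~\ref{freestModelCanBeAtomizedByAllTheAtoms}), so Theorem~\ref{atomicSegmentFromTermTheorem}(iv) applied to $F_C(\emptyset)$ gives ${\bf{C}}(s) \cap {\bf{U}}^c(\phi) \neq \emptyset$; then Theorem~\ref{atomicSegmentFromTermTheorem}(iv) applied to $M$, together with $\phi \in M$, places $\phi$ in ${\bf{L}}^a_M(\nu_M(s))$, i.e.\ $M \models (\phi < \nu_M(s))$. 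For $\Rightarrow$: by the characterization in Theorem~\ref{atomicSegmentFromTermTheorem}(iv) for $M$, the statement $M \models (\phi < \nu_M(s))$ is equivalent to ``$\phi \in M$ and ${\bf{C}}(s) \cap {\bf{U}}^c(\phi) \neq \emptyset$''; the first conjunct is the left-hand ``$\phi \in M$'', and feeding ${\bf{C}}(s) \cap {\bf{U}}^c(\phi) \neq \emptyset$ back into Theorem~\ref{atomicSegmentFromTermTheorem}(iv) for $F_C(\emptyset)$ yields $F_C(\emptyset) \models (\phi < s)$, the second conjunct.

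The step I expect to need the most care is the bookkeeping in~(ii): checking that the right-hand side $M \models (\phi < \nu_M(s))$ genuinely decomposes as ``$\phi \in M$'' plus the model-independent condition ${\bf{C}}(s) \cap {\bf{U}}^c(\phi) \neq \emptyset$ --- which it does, because Theorem~\ref{atomicSegmentFromTermTheorem}(iv) already writes ${\bf{L}}^a_M(\nu_M(s))$ as a set of atoms \emph{in} $M$ --- and that the same condition is exactly what $F_C(\emptyset) \models (\phi < s)$ asserts, using that $\phi \in F_C(\emptyset)$ for every atom. Once this is isolated, both parts are short and I anticipate no substantive obstacle.
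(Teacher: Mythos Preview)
Your proposal is correct and follows essentially the same route as the paper. Both arguments rest on the single combinatorial fact that the relation ``$\phi < s$'' is governed entirely by whether ${\bf{U}}^c(\phi)\cap{\bf{C}}(s)$ is nonempty, together with the membership clause $\phi\in M$; you cite Theorem~\ref{atomicSegmentFromTermTheorem}(iv) for this, while the paper's version goes through Theorem~\ref{atomicSegmentFromTermTheorem}(iii) to extract a witnessing constant $c$ and then appeals to transitivity and Definition~\ref{atomInAsAsetDefinition}. One cosmetic point: you have interchanged the labels ``$\Leftarrow$'' and ``$\Rightarrow$'' in part~(ii) (you write ``For $\Leftarrow$: assume $\phi\in M$ and $F_C(\emptyset)\models(\phi<s)$'', which is the left-to-right direction), but the content under each label is correct.
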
 
\begin{proof}
i) Follows immediately from the fact that $\nu_M$ is a homomorphism and is provided here for comparison with proposition (ii). \\ 
ii) Note that we use here the same atom $\phi$ in the contexts of two atomized models, $F_C(\emptyset)$ and $M$. Left to right: using theorem \ref{atomicSegmentFromTermTheorem}(iii), $F_C(\emptyset) \models (\phi < s)$ implies that there is some constant $c$ such that $F_C(\emptyset) \models (\phi < c \leq s)$ and then, from the natural homomorphism, $M \models (\nu_M(c) \leq \nu_M(s))$.  From definition \ref{atomInAsAsetDefinition}, $F_C(\emptyset) \models (\phi < c)$ requires $c \in {\bf{U}}^c(\phi)$ and then, because we assume $\phi \in M$, the same definition implies $M \models (\phi < \nu_M(c))$.  Using the transitive property of the order relation $M \models (\phi < \nu_M(s))$.  \\ 
Right to left can be proven with the same argument than left to right interchanging the roles of $M$ and $F_C(\emptyset)$ except for the fact that we do not need to require $\phi \in F_C(\emptyset)$ as this is always the case for any atom, as proven in theorem \ref{freestModelCanBeAtomizedByAllTheAtoms}.
\end{proof}	
\bigskip
\bigskip

Note that the implication arrow in proposition (i) of the last theorem goes only left to right while in proposition (ii) goes in both directions. 

We say that an atom $\phi$ ``discriminates'' a duple $r = (r_L, r_R)$ if $(\phi < r_L) \wedge (\phi \not< r_R)$. If $\phi \in M$ then axiom AS3 implies that $M \models r^{-}$.

\bigskip
\bigskip
\begin{theorem} \label{atomIndependentFromTheRestTheorem}
Let $t,s \in F_C(\emptyset)$ be two terms and $M$ and $N$ two models. If an atom $\phi$ discriminates a duple $(t ,s)$ in $M$ then, either $\phi \not\in N$ or $\phi$ also discriminates $(t ,s)$ in $N$.
\end{theorem}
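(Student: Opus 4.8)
The plan is to show that whether $\phi$ discriminates $(t,s)$ is entirely a matter of how the constant sets ${\bf{C}}(t)$, ${\bf{C}}(s)$ meet the upper constant segment ${\bf{U}}^c(\phi)$, with no dependence on the ambient model; once that is made precise, the statement is immediate. I read the hypothesis ``$\phi$ discriminates $(t,s)$ in $M$'' as: $\phi\in M$ together with $M\models(\phi<\nu_M(t))$ and $M\models(\phi\not<\nu_M(s))$ — it is precisely the membership $\phi\in M$ that makes these comparisons legitimate.

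First I would invoke Theorem~\ref{atomicSegmentFromTermTheorem}(iv): for an atom of $M$ and any term $u$, the relation $M\models(\phi<\nu_M(u))$ holds precisely when ${\bf{C}}(u)\cap {\bf{U}}^c(\phi)\neq\emptyset$. Applying this with $u=t$ and $u=s$ rewrites the hypothesis as the purely combinatorial conjunction
\[
{\bf{C}}(t)\cap {\bf{U}}^c(\phi)\neq\emptyset \qquad\text{and}\qquad {\bf{C}}(s)\cap {\bf{U}}^c(\phi)=\emptyset ,
\]
which refers only to $t$, $s$, and the universally defined atom $\phi$ (Definition~\ref{atomInAsAsetDefinition}), not to $M$. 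Alternatively, one can route this through Theorem~\ref{homomorphismTheorem}(ii), which for $\phi\in M$ identifies $M\models(\phi<\nu_M(u))$ with $F_C(\emptyset)\models(\phi<u)$; that exhibits ``$\phi$ discriminates $(t,s)$ in $M$'' as equivalent to ``$\phi$ discriminates $(t,s)$ in $F_C(\emptyset)$'', and hence as independent of $M$.

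To conclude, I would take the second model $N$. If $\phi\notin N$ we are in the first alternative of the statement and there is nothing to prove. If $\phi\in N$, then Theorem~\ref{atomicSegmentFromTermTheorem}(iv) applies again, now inside $N$, and the displayed condition — unchanged, since it never mentioned a model — yields $N\models(\phi<\nu_N(t))$ and $N\models(\phi\not<\nu_N(s))$, i.e.\ $\phi$ discriminates $(t,s)$ in $N$, which is the second alternative.

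I expect no genuine difficulty here: the argument is bookkeeping around Theorem~\ref{atomicSegmentFromTermTheorem}. The only point that demands care is being explicit that ``discriminates in $M$'' carries the standing assumption $\phi\in M$ (otherwise the cited results do not apply and the comparison $\phi<\nu_M(t)$ has no content), and phrasing the extracted condition in a manifestly model-free form so that it transfers verbatim to $N$.
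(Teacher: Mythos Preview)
Your proposal is correct and essentially matches the paper's argument. The paper proves the result in one line by applying Theorem~\ref{homomorphismTheorem}(ii) right-to-left to pass from $M$ to $F_C(\emptyset)$ and then left-to-right to pass from $F_C(\emptyset)$ to $N$; you give this exact route as your alternative, while your primary route via Theorem~\ref{atomicSegmentFromTermTheorem}(iv) simply unpacks the same step into the underlying combinatorial criterion ${\bf{C}}(u)\cap{\bf{U}}^c(\phi)\neq\emptyset$, which is what Theorem~\ref{homomorphismTheorem}(ii) encodes anyway.
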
 
\begin{proof}
If $\phi$ discriminates $(t ,s)$ in $M$, using part (ii) of theorem \ref{homomorphismTheorem} right-to-left for both terms, follows that $\phi$ discriminates $(t,s)$ in $F_C(\emptyset)$ and then, using again theorem \ref{homomorphismTheorem}(ii) left-to-right, we get that either $\phi \not\in N$ or $\phi$ also discriminates $(t,s)$ in $N$. 
\end{proof}	
\bigskip

\bigskip
\begin{definition} \label{wider}  
We say an atom $\phi$ is \emph{wider} than atom $\eta$ if it is different than $\eta$ and for every constant $c$, $(\eta < c) \Rightarrow (\phi < c)$.
\end{definition} 
\bigskip

 Equivalently, an atom $\phi$ is \emph{wider} than atom $\eta$ when ${{\bf{U}}^{c}}(\eta) \subsetneq {{\bf{U}}^{c}}(\phi)$. It is possible to further extend the partial order by taking $\phi < \eta$ if and only if $\phi$ is wider than $\eta$. In this text atoms are only compared with regular elements and such extension is not needed.

\bigskip
\begin{definition} \label{redundantAtomDefi}  
We say an atom $\phi$ is redundant with $M$ when for each constant $c$ such that $\phi < c$ there is at least one atom $\eta < c$ in $M$ such that $\phi$ is wider than $\eta$. 
\end{definition} 
\bigskip

With respect to a model $M$ we say that an atom $\phi$ is ``external to $M$" if $\phi$ is not in $M$ (as defined in \ref{isInDefinition}) and we write $\phi \not\in M$. If an atom is in $M$ it may be either redundant with $M$ or not. If the atom is in $M$ and is not redundant with $M$ then we call it a ``non-redundant atom in $M$''. Therefore, with respect to a model $M$ an atom can be either a ``non-redundant atom in $M$'' or ``redundant with $M$'' or ``external to $M$''.  

We will see in theorem \ref{redundantAtom} that redundant atoms can be discarded without altering the semilattice model. In addition, in theorem \ref{uniqueAtomization} we prove that the set of atoms that are non-redundant with $M$ is unique, and there is only one atomization of $M$ with every atom non-redundant in $M$. As a consequence of these theorems the non-redundant atoms in $M$ are those that are on every possible atomization of $M$ while the redundant atoms with $M$ are those that are only in some atomizations of $M$. When we work with an specific set of atoms $A$, a redundant atom with $M$ may or may not be in set $A$. 

Atomized semilattice models often have many atoms. We are interested in the subset of non-redundant atoms that suffices to atomize the model. For example, in theorem  \ref{freestModelTheorem} we will show that $F_C(\emptyset)$ can be atomized with as few as $\vert C \vert$ non-redundant atoms, however, every one of the $2^{\vert C\vert} - 1$ possible atoms over $C$ is an atom in $F_C(\emptyset)$ as proven in theorem \ref{freestModelCanBeAtomizedByAllTheAtoms}. The number of atoms in a model $M = [A]$ atomized by a set $A$ of atoms is usually much larger than $\vert A\vert$ as most atoms might be redundant.

\bigskip
\bigskip

The zero atom, $\ominus_c$, is defined by $\,{\bf{U}}^{c}(\ominus_c) = C\,$ and satisfies $\ominus_c < d$ for each constant $d \in C$. We can always add to any atomized model the atom $\ominus_c$ without changing the semilattice formed by its regular elements. If a set of atoms $A$ spawns a model $[A]$ then the semilattice spawned by $[A  \cup  \{\ominus_c\}]$ is the same semilattice than $[A]$. 

If we use definition \ref{atomInAsAsetDefinition} a set $A$ of atoms always forms an atomized semilattice if it satisfies the sixth axiom. If $A$ does not form and atomized semilattice then theorem \ref{atomizationsAreModels} tells us that $A  \cup  \{\ominus_c\}$ does.

Atom $\ominus_c$ is redundant in most models. It is redundant unless there is at least one constant $q$ such that $M \models \forall x (q \leq x)$, a constant that acts as a neutral element. If there are two such constants, $q_1$ and $q_2$, then $M \models (q_1 = q_2)$.

\bigskip
\bigskip
\begin{theorem} \label{ceroTheorem}
Let $M$ be a semilattice over $C$. \\
i) $\ominus_c$ is in $M$,  \\
ii) $\ominus_c$ is a non-redundant atom in $M$ if and only if there is at least one constant $q \in C$ such that $M \models \forall x (q \leq x)$.
\end{theorem}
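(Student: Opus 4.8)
The plan is to establish (i) first, since it feeds both directions of (ii). For (i), I would fix any atomization $A$ of $M$ (one exists, as every finite semilattice is atomizable) and show that $A \cup \{\ominus_c\}$ is again an atomization of $M$; because this set contains $\ominus_c$, Definition~\ref{isInForSemilatticesDefinition} then gives $\ominus_c \in M$. The point is that ${\bf{U}}^c(\ominus_c) = C$ meets every non-empty ${\bf{C}}(t)$, so by Theorem~\ref{atomicSegmentFromTermTheorem}(iv) the atom $\ominus_c$ lies below every regular element; hence ${\bf{L}}^a_{[A \cup \{\ominus_c\}]}(t) = {\bf{L}}^a_{[A]}(t) \cup \{\ominus_c\}$ for every term $t$. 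Adjoining the same $\ominus_c$ to both sides of an inclusion does not affect whether it holds, so by Theorem~\ref{atomicSegmentFromTermTheorem}(vi) the two structures order the terms identically and hence share the same semilattice of regular elements; AS6 for $A \cup \{\ominus_c\}$ is immediate for the same reason. This recasts, through Definition~\ref{isInForSemilatticesDefinition}, the remark preceding the theorem.

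For (ii) the first step is to unwind the definition of redundancy for $\ominus_c$. Since ${\bf{U}}^c(\ominus_c) = C$, the atom $\ominus_c$ is \emph{wider} (Definition~\ref{wider}) than every atom $\eta \ne \ominus_c$ and never wider than itself; so by Definition~\ref{redundantAtomDefi}, ``$\ominus_c$ is redundant with $M$'' says precisely ``for every $c \in C$ there is an atom $\eta \in M$ with $\eta < c$ and $\eta \ne \ominus_c$''. Equivalently, using (i), ``$\ominus_c$ is a non-redundant atom in $M$'' is the statement ``some constant $c_0$ has $\ominus_c$ as its only atom in $M$ below it''.

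With this reformulation, ``$\Leftarrow$'' is short: if $q \in C$ satisfies $M \models \forall x\,(q \le x)$ and $\eta$ is an atom in $M$ with $\eta < q$, then $\eta < q \le d$ forces $\eta < d$ for every constant $d$, so ${\bf{U}}^c(\eta) = C$ and $\eta = \ominus_c$; thus $\ominus_c$ is the only atom in $M$ below $q$, and $\ominus_c$ is non-redundant in $M$. For ``$\Rightarrow$'', assume $\ominus_c$ non-redundant and let $c_0$ be the witnessing constant. Fix any atomization $A$ of $M$; every atom of $A$ is automatically in $M$ (take $A$ itself in Definition~\ref{isInForSemilatticesDefinition}), so every atom of $A$ below $c_0$ equals $\ominus_c$, and AS6 supplies at least one, whence $\ominus_c \in A$ and ${\bf{L}}^a_M(c_0) = \{\ominus_c\}$. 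Since $\ominus_c < d$ for every constant $d$ and $\ominus_c \in A$, we get ${\bf{L}}^a_M(c_0) = \{\ominus_c\} \subseteq {\bf{L}}^a_M(d)$, hence $c_0 \le_M d$ by Theorem~\ref{atomicSegmentFromTermTheorem}(vi). As every regular element is an idempotent sum of constants lying above each of its component constants (Theorem~\ref{atomicSegmentFromTermTheorem}(i)), $c_0$ lies below every regular element, so $q := c_0$ witnesses the right-hand side.

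The one thing that needs care throughout is the distinction between an atom being ``in $M$'' in the sense of Definitions~\ref{isInDefinition}--\ref{isInForSemilatticesDefinition} and an atom merely belonging to a chosen atomization $A$: redundancy of $\ominus_c$ is a property of $M$ alone, whereas ${\bf{L}}^a_M$ is read off a particular $A$. The bridge is that every atom of any atomization is automatically ``in $M$'', together with (i), which puts $\ominus_c$ itself ``in $M$''; once these are noted, the small counting argument with AS6 closes the gap. I expect this bookkeeping, rather than any computation, to be the only real obstacle.
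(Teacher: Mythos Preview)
Your proposal is correct and follows essentially the same route as the paper: for (i) you both show that adjoining $\ominus_c$ to any atomization leaves the induced semilattice unchanged (you via Theorem~\ref{atomicSegmentFromTermTheorem}(iv),(vi), the paper via Theorem~\ref{homomorphismTheorem}(ii) and the observation that $\ominus_c$ discriminates no duple); for (ii) both arguments reduce non-redundancy of $\ominus_c$ to the existence of a constant whose only atom below it is $\ominus_c$, and then pass between that and $\forall x\,(q\le x)$ in the same way. Your extra care in separating ``$\phi \in M$'' from ``$\phi$ belongs to a fixed atomization $A$'' is a genuine improvement in clarity over the paper's version; the only cosmetic wrinkle is the forward reference to atomizability of finite semilattices, but the paper's proof makes the same implicit assumption and there is no circularity.
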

\begin{proof}
(i) Suppose $M$ is atomized by a set of atoms $A$. Since $F_C(\emptyset) \models \forall t (\ominus_c < t)$ then, from theorem \ref{homomorphismTheorem}(ii), the model spawned by $A \cup \{\ominus_c\}$ satisfies $ \forall t (\ominus_c < t)$. Since an atom in the lower segment of every regular element cannot discriminate any duple, then $[A \cup \{\ominus_c\}] = M$ and it follows that $\ominus_c \in M$. \\
(ii) By definition, $\ominus_c$ is redundant with $M$ if for each constant $c \in C$ there is at least one atom $\eta \in M$ different than $\ominus_c$ such that $\eta < c$. It follows that $\ominus_c$ is not redundant with $M$ if and only if there is some constant $q \in C$ for which there is no other atom $\eta \in M$ such that $\eta < q$. We have shown in proposition (i) that $\ominus_c \in M$ so if $\ominus_c$ is not redundant with $M$ then $\ominus_c$ is a non-redundant atom in $M$. Therefore, if $\ominus_c$ is a non-redundant atom in $M$ there is at least one constant $q$ that only has $\ominus_c$ in its lower atomic segment. Since $\ominus_c$ is in the lower segment of every element of $M$, theorem \ref{atomicSegmentFromTermTheorem}(iii) implies that $M \models \forall x (q \leq x)$.  \\
On the other direction, if there is a constant $q$ such that $M \models \forall x (q \leq x)$ then $q \leq c$ for each constant $c$. Every atom $\phi < q$ satisfies $\phi < c$ for every constant of $C$, therefore, ${\bf{U}}^{c}(\phi) = C$ and axiom AS5 implies $\phi = \ominus_c$. In addition, $\ominus_c$ cannot be redundant with $M$ because if it were, there would be an atom $\eta < q$ with $\ominus_c$ wider than $\eta$ contradicting that $q$ only has $\ominus_c$ in its lower segment.
 \end{proof}

\bigskip 
\bigskip
\begin{theorem} \label{atomizationsAreModels}
Let $A$ be a set of atoms over the constants $C$ with each atom defined by its upper constant segment according to \ref{atomInAsAsetDefinition}. Either $A$ or $A \cup \{\ominus_c\}$ spawns an atomized semilattice model.  
\end{theorem}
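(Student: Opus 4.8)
The plan is to make the structure $[A]$ explicit and then check the axioms of Definition~\ref{finiteAomizedSemilattice} one at a time, isolating AS6 as the only one that can fail for a general $A$. First I would describe $[A]$. For a term $t\in F_C(\emptyset)$ put $L_A(t)=\{\phi\in A:\mathbf{C}(t)\cap\mathbf{U}^c(\phi)\neq\emptyset\}$, declare $t\sim s$ when $L_A(t)=L_A(s)$, and take the regular elements of $[A]$ to be the equivalence classes, with $[t]\odot[s]:=[t\odot s]$, with $[t]\le[s]$ iff $L_A(t)\subseteq L_A(s)$, and with the atoms being the members of $A$ under $\phi<[t]$ iff $\phi\in L_A(t)$. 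The identity $L_A(t\odot s)=L_A(t)\cup L_A(s)$ (this is exactly Theorem~\ref{atomicSegmentFromTermTheorem}(v), and is immediate from $\mathbf{C}(t\odot s)=\mathbf{C}(t)\cup\mathbf{C}(s)$) shows $\odot$ is well defined on classes and, with the fact that $\subseteq$ is a partial order, yields idempotence, commutativity and associativity of $\odot$ and the order properties of $<$ on the regular part; no comparison $a<\phi$ is ever posited, in accordance with AS2. Each regular element is named by a term, hence is a constant or an idempotent summation of constants.

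Next I would verify AS1--AS5 for this structure, for \emph{any} $A$, using only that every $\phi\in A$ is a non-empty subset of $C$. For AS1, pick $c\in\mathbf{U}^c(\phi)$; since $L_A(c)=\{\psi\in A:c\in\mathbf{U}^c(\psi)\}$ we get $\phi<[c]$. AS2 holds by construction. AS3 unwinds directly: $[t]\le[s]$ means $L_A(t)\subseteq L_A(s)$, which is exactly the non-existence of $\phi$ with $\phi\in L_A(t)$ and $\phi\notin L_A(s)$, i.e. $\phi<[t]$ and $\phi\not<[s]$. AS4 is the identity $L_A(t\odot s)=L_A(t)\cup L_A(s)$ rewritten through the definitions of $\odot$ and $<$. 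For AS5, note $\phi<[c]$ iff $c\in\mathbf{U}^c(\phi)$; so if $(\phi<[c])\Leftrightarrow(\psi<[c])$ for every $c\in C$ then $\mathbf{U}^c(\phi)=\mathbf{U}^c(\psi)$, and by the universal naming convention of Definition~\ref{atomInAsAsetDefinition} (atoms with the same upper constant segment are the same atom) $\phi=\psi$.

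Then I would pin down AS6, which asks that each regular element $[t]$ has an atom below it, i.e. $L_A(t)\neq\emptyset$. Since $\mathbf{C}(t)\neq\emptyset$, for any $c\in\mathbf{C}(t)$ we have $L_A(t)\supseteq L_A(c)$, so AS6 holds for all $[t]$ precisely when $L_A(c)\neq\emptyset$ for every constant $c$, i.e. precisely when $\bigcup_{\phi\in A}\mathbf{U}^c(\phi)=C$. If $A$ already covers $C$ in this sense, then $[A]$ satisfies AS1--AS6 and $A$ spawns an atomized semilattice. If it does not, consider $A\cup\{\ominus_c\}$: because $\mathbf{U}^c(\ominus_c)=C$, this set covers $C$, so AS6 holds for $[A\cup\{\ominus_c\}]$; AS1--AS4 hold for $A\cup\{\ominus_c\}$ by the same reasoning as above; and AS5 still holds, since either $\ominus_c\in A$ already (nothing changes) or no atom of $A$ has upper constant segment $C$ (otherwise it would equal $\ominus_c$), so no two distinct atoms of $A\cup\{\ominus_c\}$ share an upper constant segment. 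Hence $A\cup\{\ominus_c\}$ spawns an atomized semilattice, and in either case one of $A$, $A\cup\{\ominus_c\}$ does.

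I do not expect a serious obstacle: the argument is essentially a bookkeeping check of the six axioms against the definition of $[A]$. The only points needing a moment's care are the well-definedness of $\odot$ on $\sim$-classes (covered by Theorem~\ref{atomicSegmentFromTermTheorem}(v)), the reduction of AS6 to the covering condition $\bigcup_{\phi\in A}\mathbf{U}^c(\phi)=C$ via the linearity of $L_A$, and the one-line observation that adjoining $\ominus_c$ cannot break AS5.
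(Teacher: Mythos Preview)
Your proposal is correct and follows essentially the same approach as the paper: both build $[A]$ by quotienting $F_C(\emptyset)$ under the relation determined by the lower atomic segments $L_A(t)$, verify well-definedness of $\odot$ via $L_A(t\odot s)=L_A(t)\cup L_A(s)$, and identify AS6 as the only axiom that may fail and that is repaired by adjoining $\ominus_c$. Your treatment is in fact more explicit than the paper's, which waves AS1--AS5 through as ``straightforward'' and does not separately note that AS5 survives the adjunction of $\ominus_c$.
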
 
\begin{proof}
To construct the atomized semilattice model $M = [A]$ spawned by a set of atoms $A$, we start with the relation $\leq_M$ over $F_C(\emptyset)$ defined, for any two terms $t,s \in F_C(\emptyset)$, by $(t \leq_M s) \equiv  \neg \exists \phi:(\phi \in A) ((\phi < t)  \wedge (\phi \not< s))$, where $(\phi < t) \equiv ({\bf{C}}(t) \cap {\bf{U}}^c(\phi) \not= \emptyset)$. The elements of $M$ correspond to equivalence classes in $F_C(\emptyset)$ with $t$ and $s$ in the same class if and only if $(t \leq_M s)  \wedge (s \leq_M t)$. The natural homorphism $\nu_M(t)$ maps each term to its equivalence class and $(\nu_M(t) = \nu_M(s)) \equiv (t \leq_M s)  \wedge (s \leq_M t)$. The idempotent summation $a \odot b$ for two elements $a$ and $b$ of $M$ can be built by choosing any terms in the inverse images of the natural homorphism, say $t_a$ and $t_b$ with $a = \nu_M(t_a)$ and $b = \nu_M(t_b)$ and by calculating $\nu_M(t_a \odot t_b)$. With the help of the idempotent operator $\odot$ we can extend the relation $\leq_M$ defined for $F_C(\emptyset)$ to a relation in $M$ using the usual $M \models (a \leq b)$ iff $M \models (b = a \odot b)$. \\ 
To show that the idempotent summation of elements is consistently defined, suppose we choose other terms $s_a, s_b$ such that $a = \nu_M(s_a)$ and $b = \nu_M(s_b)$. Then $(\nu_M(t_a) = a = \nu_M(s_a))$ and $(\nu_M(t_b) = b = \nu_M(s_b))$ and these sentences are equivalent to $(t_a \leq_M s_a)  \wedge (s_a \leq_M t_a)$ and $(t_b \leq_M s_b)  \wedge (s_b \leq_M t_b)$ respectively. Using the definition of $\leq_M$ in $F_C(\emptyset)$ given above it is straightforward to show that these two sentences are true if and only if $\{ \phi: (\phi \in A) \wedge (\phi < t_a)\} = \{ \phi: (\phi \in A) \wedge (\phi < s_a)\}$ and $\{ \phi: (\phi \in A) \wedge (\phi < t_b)\} = \{ \phi: (\phi \in A) \wedge (\phi < s_b)\}$ are true. From the definition of $(\phi < t)$ and ${\bf{C}}(t \odot s) = {\bf{C}}(t) \cup {\bf{C}}(s)$ we can write: 
\[
\{ \phi: (\phi \in A) \wedge (\phi < t_a  \odot t_b)\} =  \{ \phi: (\phi \in A) \wedge (\phi < t_a)\} \cup  \{ \phi: (\phi \in A) \wedge (\phi < t_b)\} =
\]\[
= \{ \phi: (\phi \in A) \wedge (\phi < s_a)\} \cup  \{ \phi: (\phi \in A) \wedge (\phi < s_b)\} = 
\]\[
 = \{ \phi: (\phi \in A) \wedge (\phi < s_a  \odot s_b)\} 
\]
and this proves $(t_a \odot t_b \leq_M s_a \odot s_b)  \wedge (s_a \odot s_b \leq_M t_a \odot t_b)$ or, in other words: \[ 
\nu_M(t_a \odot t_b) = \nu_M(s_a \odot s_b).
\]  
We have shown that it does not matter which terms of $F_C(\emptyset)$ we choose to represent $a$ and $b$ the element of $M$ defined by $\nu_M(t_a \odot t_b)$ is the same. Since the union of sets is idempotent, commutative and associative and ${\bf{C}}(t \odot s) = {\bf{C}}(t) \cup {\bf{C}}(s)$ is easy to see that our operation $\odot$ is commutative, associative and idempotent in $M$ and we can identify the structure built with a semilattice.  \\ 
It is straightforward to check that the other axioms of atomized semilattices also hold with the exception of the last one: $\forall a \exists \phi \, (\phi < a)$, that may or may not hold for $A$. Since $\forall a (\ominus_c < a)$, the algebraic structure spawned by the atoms $A  \cup  \{\ominus_c\}$ satisfies $(t \leq_{[A]} s) \Leftrightarrow (t \leq_{[A  \cup  \{\ominus_c\}]} s)$ and also satisfies the sixth axiom of atomized semilattices. Therefore, $[A  \cup  \{\ominus_c\}]$ is an atomized semilattice model. 
\end{proof}	
\bigskip
\bigskip

We have shown that any atomization spawns a semilattice and we show in theorem \ref{atomizationExistsTheorem} that all semilattice models can be atomized. 

Theorem \ref{homomorphismTheorem} provides a natural way to extend the natural homorphism from semilattices to atomized semilattices. The extended natural homomorphism maps regular elements of $F_C(\emptyset)$ to regular elements of $M$ and is equal to the semilattice natural homomorphism in this case. For atoms, the extended natural homomorphism maps atoms to atoms, as follows:
\[
\nu_M(\phi) =  \begin{dcases}
        \phi & \phi \in M \\
        \ominus_c & \phi \not\in M \\
    \end{dcases} 
\]
which works because every atom is in the freest semilattice $F_C(\emptyset)$, and $\ominus_c$ is in every model.

We made the choice of using the elements of $F_C(\emptyset)$ as elements names for every model because terms provide references that are universal, i.e. references that apply to any model over $C$ and can be transferred from model to model. We have seen that atoms also have the same kind of universality. An atom can be compared with every regular element of any model and has meaning in the context of any model. Atoms of a model can be carried out to other atomizations to build models with properties of interest. 
\newpage

\section{Full crossing, freedom and redundancy} \label{freedomFullCrossingAndRedundacy}

\begin{definition} \label{freerModelDefinition}  
Let $A$ and $B$ be two models over the constants $C$. We say that model $A$ is ``freer or as free" than model $B$ if for every duple $r$ over $C$ we have $\,\,B \models r^{-} \Rightarrow A \models r^{-}$ 
\end{definition} 
\bigskip

\begin{theorem} \label{atomsCanBeRemovedTheorem}
Let $C$ be a set of constants, $K \subseteq C$ and $r$ a duple over $K$. Let $M$ be an atomized semilattice model over $C$ and $A$ a set of atoms that atomizes $M$. Let $B$ be a subset of $A$ and $N = [B  \cup  \{  \ominus_K \}]$ an atomized semilattice model over $K$. \\ 
i) If $M \models r^{+}$ then $N \models r^{+}$.  \\ 
ii)  $M$ is as free or freer than $N$.
\end{theorem}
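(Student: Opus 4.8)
The plan is to prove (i) directly and then obtain (ii) as an immediate corollary: for any model $P$ and any duple $r$ over its constants exactly one of $P \models r^{+}$ and $P \models r^{-}$ holds, so ``$M \models r^{+} \Rightarrow N \models r^{+}$'' says the same thing as ``$N \models r^{-} \Rightarrow M \models r^{-}$'', which is precisely the condition of definition \ref{freerModelDefinition} that $M$ is as free or freer than $N$ (read over the common constants $K$, over which $N$ is defined).

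For (i) I would first rewrite the hypothesis in terms of atoms. By theorem \ref{atomicSegmentFromTermTheorem}(vi), $M \models r^{+}$ is equivalent to ${\bf{L}}^a_M(r_L) \subseteq {\bf{L}}^a_M(r_R)$, and by theorem \ref{atomicSegmentFromTermTheorem}(iv) together with AS3 this says that no atom of $M$ \emph{discriminates} $r$, i.e.\ every atom $\phi$ of $M$ with ${\bf{C}}(r_L) \cap {\bf{U}}^c(\phi) \neq \emptyset$ also satisfies ${\bf{C}}(r_R) \cap {\bf{U}}^c(\phi) \neq \emptyset$. Since $A$ atomizes $M$ and $B \subseteq A$, in particular no atom of $B$ discriminates $r$.

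The substantive step, and the one I expect to be the main obstacle, is passing from the atoms of $M$ over $C$ to the atoms of $N$ over $K$. I would argue that the atoms of $N$ are $\ominus_K$ together with the restrictions to $K$ of the atoms of $B$, where the restriction of $\phi \in B$ is the atom $\phi'$ with ${\bf{U}}^c(\phi') = {\bf{U}}^c(\phi) \cap K$ (discarding those $\phi$ for which this intersection is empty, since such $\phi$ cannot be atoms of a model over $K$; the presence of $\ominus_K$ is exactly what keeps AS6 satisfied so that $N$ is a genuine model over $K$, as the statement asserts). The key observation is that, since $r$ is a duple over $K$, one has ${\bf{C}}(r_L), {\bf{C}}(r_R) \subseteq K$, and hence ${\bf{C}}(r_L) \cap {\bf{U}}^c(\phi) = {\bf{C}}(r_L) \cap {\bf{U}}^c(\phi) \cap K = {\bf{C}}(r_L) \cap {\bf{U}}^c(\phi')$, and likewise for $r_R$; thus $\phi'$ stands to $r_L$ and $r_R$ in $N$ exactly as $\phi$ stands to them in $M$, so no such $\phi'$ discriminates $r$ in $N$. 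Finally $\ominus_K$ lies below every constant of $K$, hence by theorem \ref{atomicSegmentFromTermTheorem}(ii) below every term over $K$, so it discriminates no duple over $K$ either.

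Putting these together, no atom of $N$ discriminates $r$, so AS3 applied in $N$ gives $\nu_N(r_L) \leq \nu_N(r_R)$, that is $N \models r^{+}$ (equivalently ${\bf{L}}^a_N(r_L) \subseteq {\bf{L}}^a_N(r_R)$ by theorem \ref{atomicSegmentFromTermTheorem}(vi)). This proves (i), and (ii) follows by the contrapositive remark at the start. Apart from the $C$-to-$K$ restriction, the only remaining care is bookkeeping: correctly describing the atom set of $N$, handling the atoms of $B$ that collapse to nothing over $K$, and noting that adjoining $\ominus_K$ affects the discrimination of no duple over $K$.
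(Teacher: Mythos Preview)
Your proposal is correct and follows essentially the same route as the paper: show via AS3 that if no atom of $A$ discriminates $r$ then, since $B\subseteq A$ and $\ominus_K$ discriminates nothing over $K$, no atom of $B\cup\{\ominus_K\}$ discriminates $r$; part (ii) is then the contrapositive. The only difference is that you spend a paragraph on the passage from $C$ to $K$ by explicitly restricting each $\phi\in B$ to $\phi'$ with ${\bf U}^c(\phi')={\bf U}^c(\phi)\cap K$, whereas the paper's proof skips this entirely: because the relation $\phi<t$ is defined universally by ${\bf C}(t)\cap{\bf U}^c(\phi)\neq\emptyset$ (theorem~\ref{atomicSegmentFromTermTheorem}(iv)) and $r_L,r_R$ are terms over $K$, one has $\phi<r_L$ iff $\phi'<r_L$ automatically, so the paper simply writes AS3 for $N$ with the atoms of $B$ unchanged. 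Your extra care is not wrong, just not needed given the paper's conventions.
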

\begin{proof}
Let $r = (r_L, r_R)$, the duple over $K$. Axiom AS3 of atomized semilattices in $N$ reads $\nu_N(r_L) \leq \nu_N(r_R)  \,  \Leftrightarrow  \neg \exists \phi: ((\phi  \in B \cup  \{  \ominus_K \}]) \wedge (\phi < r_L)  \wedge (\phi \not< r_R ))$ while, for model $M$ is $\nu_M(r_L) \leq \nu_M(r_R)  \,  \Leftrightarrow  \neg \exists \phi: ((\phi \in A) \wedge  (\phi < r_L)  \wedge   (\phi \not< r_R ))$. Since $\ominus_K$ discriminates no duple over $K$ and $B \subseteq A$, it follows that $(\nu_M(r_L) \leq \nu_M(r_R))  \Rightarrow  (\nu_N(r_L) \leq \nu_N(r_R))$. Therefore, a model $N$ spawned by any subset of atoms of $M$ satisfies all the positive duples satisfied by $M$. \\
By negating the sentence above we get $(\nu_N(r_L) \not\leq \nu_N(r_R))  \Rightarrow (\nu_M(r_L) \not\leq \nu_M(r_R))$ which means that $M$ satisfies all the negative duples of $N$ and, hence, $M$ is as free or freer than $N$.
\end{proof}	

\bigskip

We need a few definitions:

The discriminant ${\bf{dis}}_M(a,b)$ in an atomized model $M$ is the set ${{\bf{L}}^{a}_M}(a) - {{\bf{L}}^{a}_M}(b)$ of atoms of $M$. From theorem \ref{atomicSegmentFromTermTheorem}(iv) follows that $M \models a \leq b$ if and only if ${\bf{dis}}_M(a,b)$ is empty. 

For each atom $\phi \not= \ominus_c$, let the ``pinning term'' $T_{\phi}$ be equal to the idempotent summation of all the constants in the set $C - {\bf{U}}^{c}(\phi)$. For each constant $c \in {\bf{U}}^{c}(\phi)$, i.e, for each constant such that $(\phi < c)$, define the ``pinning duple'' as $\neg(c < T_{\phi})$.  $PR(\phi)$ is the set of pinning duples of an atom $\phi$. Note that pinning terms and pinning duples of an atom are independent of the model as they only depend upon $C$ and the set ${\bf{U}}^{c}(\phi)$. 

Let $Th_0(M)$ be the set of duples satisfied by (the regular elements of) the model $M$, and the positive and negative theories of $M$, written $Th_0^{+}(M)$ and $Th_0^{-}(M)$ respectively, are the sets of positive duples satisfied by $M$ and the set of negative duples satisfied by $M$.

\bigskip
\bigskip

The next theorem shows that an atom that is redundant with a model $M$ atomized by a set of atoms $A$ can be added to $A$ or taken out from $A$ without changing the model.

\bigskip
\bigskip

\begin{theorem} \label{redundantAtom}
Let $A$ be a set of atoms that atomizes a model $M$ and $\phi$ an atom of $A$. The set $A - \{\phi\}$ also atomizes $M$ if and only if $\phi$ is redundant with $M$.
\end{theorem}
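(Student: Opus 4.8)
The plan is to prove both directions by characterizing, for each constant $c$ with $\phi < c$, which atoms sit below $c$ before and after removing $\phi$, and then invoking Theorem~\ref{atomicSegmentFromTermTheorem}(vi) (equivalently AS3b) to compare the two semilattices term-by-term. Write $M = [A]$ and $M' = [A - \{\phi\}]$. By Theorem~\ref{atomicSegmentFromTermTheorem}(iv), for any term $t$ the lower atomic segment ${\bf{L}}^a_{[A]}(t)$ is exactly $\{\eta \in A : {\bf{C}}(t) \cap {\bf{U}}^c(\eta) \neq \emptyset\}$, and similarly with $A$ replaced by $A - \{\phi\}$; hence ${\bf{L}}^a_{M'}(t) = {\bf{L}}^a_M(t) - \{\phi\}$ for every $t$. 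Since by Theorem~\ref{atomicSegmentFromTermTheorem}(vi) we have $[A] \models (t = s) \Leftrightarrow {\bf{L}}^a_{[A]}(t) = {\bf{L}}^a_{[A]}(s)$, the two models $M$ and $M'$ coincide precisely when removing $\phi$ never breaks or creates an inclusion, i.e.\ when for all terms $t,s$, $\phi \in {\bf{L}}^a_M(t) \setminus {\bf{L}}^a_M(s)$ would force $M \not\models (t \le s)$ already to be witnessed by some \emph{other} atom in $A$. (The reverse failure — $M' \models (t \le s)$ but $M \not\models (t \le s)$ only via $\phi$ — is the same condition.) So the whole statement reduces to: $M = M'$ iff whenever $\phi$ discriminates a duple $(t,s)$ in $M$, some $\eta \in A - \{\phi\}$ also discriminates $(t,s)$ in $M$.

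For the ``if'' direction, assume $\phi$ is redundant with $M$. Suppose $\phi$ discriminates $(t,s)$, so $\phi \in {\bf{L}}^a_M(t)$ and $\phi \notin {\bf{L}}^a_M(s)$. By Theorem~\ref{atomicSegmentFromTermTheorem}(iv) (or (ii)), $\phi \in {\bf{L}}^a_M(t)$ means there is a constant $c \in {\bf{C}}(t)$ with $\phi < c$. By redundancy (Definition~\ref{redundantAtomDefi}), there is an atom $\eta \in A - \{\phi\}$ with $\eta < c$ and $\phi$ wider than $\eta$, i.e.\ ${\bf{U}}^c(\eta) \subsetneq {\bf{U}}^c(\phi)$. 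Then $c \in {\bf{C}}(t) \cap {\bf{U}}^c(\eta)$, so $\eta \in {\bf{L}}^a_M(t)$; and since ${\bf{U}}^c(\eta) \subseteq {\bf{U}}^c(\phi)$ and $\phi \notin {\bf{L}}^a_M(s)$ gives ${\bf{C}}(s) \cap {\bf{U}}^c(\phi) = \emptyset$, a fortiori ${\bf{C}}(s) \cap {\bf{U}}^c(\eta) = \emptyset$, so $\eta \notin {\bf{L}}^a_M(s)$. Thus $\eta$ discriminates $(t,s)$ and lies in $A - \{\phi\}$. Hence ${\bf{L}}^a_M(t) \subseteq {\bf{L}}^a_M(s)$ iff ${\bf{L}}^a_{M'}(t) \subseteq {\bf{L}}^a_{M'}(s)$ for all $t,s$, so $M' = M$; we should also check AS6 survives (each regular element still has an atom below it — true because the atom we just produced, or any $\eta$ below a relevant constant, remains, and at worst one appeals to $\ominus_c$ via Theorem~\ref{atomizationsAreModels} if needed, but in fact redundancy of $\phi$ guarantees replacements).

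For the ``only if'' direction, assume $A - \{\phi\}$ also atomizes $M$ but, for contradiction, $\phi$ is not redundant with $M$. Then by Definition~\ref{redundantAtomDefi} there is a constant $c$ with $\phi < c$ such that \emph{no} atom $\eta \in M$ with $\eta < c$ is strictly narrower than $\phi$. Consider the pinning term $T_\phi$, the idempotent summation of $C - {\bf{U}}^c(\phi)$, and the pinning duple $\neg(c < T_\phi)$. By construction ${\bf{C}}(T_\phi) = C - {\bf{U}}^c(\phi)$, so ${\bf{C}}(T_\phi) \cap {\bf{U}}^c(\phi) = \emptyset$, hence $\phi \notin {\bf{L}}^a_M(T_\phi)$, while $\phi < c$ gives $\phi \in {\bf{L}}^a_M(c)$; thus $\phi$ discriminates $(c, T_\phi)$ in $M$, so $M \models \neg(c \le T_\phi)$. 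I claim $M' = [A - \{\phi\}] \models (c \le T_\phi)$, contradicting $M' = M$. Indeed, any $\eta \in A - \{\phi\}$ with $\eta \in {\bf{L}}^a_M(c)$ satisfies $\eta < c$ and $\eta \neq \phi$; since $\phi$ is not redundant at $c$, $\eta$ is not strictly narrower than $\phi$, and (as $\eta < c$ forces $c \in {\bf{U}}^c(\eta)$ while $c \in {\bf{U}}^c(\phi)$) if $\eta$ were narrower-or-equal it would equal $\phi$ by AS5 — so ${\bf{U}}^c(\eta) \not\subseteq {\bf{U}}^c(\phi)$, i.e.\ there is a constant $d \in {\bf{U}}^c(\eta)$ with $d \notin {\bf{U}}^c(\phi)$, hence $d \in {\bf{C}}(T_\phi)$ and $\eta \in {\bf{L}}^a_M(T_\phi)$. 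So every atom of $A - \{\phi\}$ below $c$ is also below $T_\phi$, giving ${\bf{L}}^a_{M'}(c) \subseteq {\bf{L}}^a_{M'}(T_\phi)$, i.e.\ $M' \models (c \le T_\phi)$ — the desired contradiction.

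The main obstacle is the ``only if'' direction: one must manufacture, from the failure of redundancy at a single constant $c$, an explicit duple that $M$ refutes but $M' = [A-\{\phi\}]$ satisfies. The pinning term/duple machinery defined just before the theorem is precisely engineered for this, so the real work is the bookkeeping with upper constant segments and the one invocation of AS5 to rule out $\eta = \phi$; everything else is the translation between "$\le$'' and inclusion of lower atomic segments supplied by Theorem~\ref{atomicSegmentFromTermTheorem}. A minor side point to handle cleanly is that $M'$ is genuinely an atomized model (AS6), which in the ``only if'' direction is given by hypothesis and in the ``if'' direction follows from redundancy providing replacement atoms (or, failing that, from Theorem~\ref{atomizationsAreModels}).
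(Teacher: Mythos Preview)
Your approach is essentially the same as the paper's: in the ``if'' direction you replace a discriminating $\phi$ by a narrower $\eta$ supplied by redundancy, and in the ``only if'' direction you use the pinning term $T_\phi$ to exhibit a duple that only $\phi$ can discriminate. The paper argues the ``only if'' direction directly (for each $c\in {\bf U}^c(\phi)$, any other discriminator of $(c,T_\phi)$ must be strictly narrower than $\phi$), while you argue by contradiction at a single witness constant; these are contrapositives of the same computation.

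There is one genuine omission. Your ``only if'' argument invokes $T_\phi$, but $T_\phi$ is defined only for $\phi\neq\ominus_c$ (since $C-{\bf U}^c(\ominus_c)=\emptyset$ and the empty idempotent sum is undefined). The paper handles $\phi=\ominus_c$ separately: if $\ominus_c$ is non-redundant then (Theorem~\ref{ceroTheorem}) some constant $q$ has only $\ominus_c$ in its lower segment, so removing $\ominus_c$ violates AS6 and $A-\{\ominus_c\}$ cannot atomize $M$. Your ``minor side point'' about AS6 is exactly where this case lives, but as written your contradiction argument simply does not apply when $\phi=\ominus_c$; you should split off that case explicitly.

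A smaller remark: when you write ``there is an atom $\eta\in A-\{\phi\}$'' you are silently strengthening Definition~\ref{redundantAtomDefi}, which only promises $\eta$ ``in $M$''. The paper makes the same identification, so this is not a divergence from the paper, but it is worth being aware that the passage from $\eta\in M$ to an actual discriminator inside the concrete atomization $A-\{\phi\}$ is where the argument is doing real work.
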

\begin{proof}
Let $Th_0^{+}(M)$ be the set of all positive duples satisfied by the regular elements of $M$. Theorem \ref{atomsCanBeRemovedTheorem} tells us that positive duples do not become negative when atoms are taken out of an atomization. Taking out an atom $\phi$ from a model $M$ produces a model $N \models Th_0^{+}(M)$. Therefore, when removing an atom we only need to worry about negative duples that may become positive. To prove that a redundant atom can be eliminated, let $a$ and $b$ be a pair of terms of $F_C(\emptyset)$ and suppose $M \models (a \not\leq b)$, a negative duple satisfied by $M$ and discriminated by an atom $\phi < c \leq a$ where $c$ is some constant. If $\phi$ is redundant there is an atom $\eta < c$ in $M$ such that $\phi$ is wider than $\eta$. Suppose $\eta < b$. There is a constant $e$ such that $\eta < e \leq b$. Because $\phi$ is wider than $\eta$ then $\phi < e \leq b$ which contradicts the assumption that $\phi \in {{\bf{dis}}_M}(a, b)$. Therefore, $N \models (\eta \not< b)$, and then $\eta \in {{\bf{dis}}_N}(a, b)$ and $N \models (a \not\leq b)$. It follows that any negative duple of $M$ is also negative in $N$ and, if $N$ models the same positive and negative duples than $M$, then the semilattices $M$ and $N$ are equal. \\
Conversely, assume atom $\phi$ can be eliminated without altering $M$. Also assume $\phi \not= \ominus_c$. The pinning term $T_{\phi}$ is the idempotent summation of all the constants in the set $C - {\bf{U}}^{c}(\phi)$.  For each constant $c$ such that $\phi < c$ we have $\phi \in {{\bf{dis}}_M}(c, T_{\phi})$. From $F_C(\emptyset) \models (\phi \not< T_{\phi})$ and theorem \ref{atomIndependentFromTheRestTheorem} follows that $\phi$ discriminates duple $(c, T_{\phi})$ in $M$. If $\phi$ can be eliminated without altering $M$, for each constant $c$ in ${\bf{U}}^{c}(\phi)$ there should be some other atom $\eta_c$ in $M$ with $\eta_c < c$ and $\eta_c \not= \phi$ discriminating the duple $(c, T_{\phi})$ which implies $T_{\phi} < T_{\eta_c}$ or, equivalently, that $\phi$ is wider than $\eta_c$. Hence, for each constant such $\phi < c$ there is an $\eta_c \in M$ such that $\phi$ is wider than $\eta_c$ which proves $\phi$ is redundant.  \\ 
Finally, suppose that $\phi = \ominus_c$ is a not-redundant atom in $M$. Then theorem \ref{ceroTheorem} tells us that there is a constant $q \in C$ such that $M \models \forall x (q \leq x)$, a constant $q$ with a lower atomic segment that contains only a single atom: $\phi$ (see the proof of  \ref{ceroTheorem}). If $\phi$ is taken out of the model there would be a constant with no atoms in its lower segment, which contradicts the sixth axiom of atomized semilattices. Hence,  $A - \{\phi\}$ does not atomize $M$.
\end{proof}	
\bigskip

\begin{theorem} \label{uniqueRelationTheorem}
Let $M$ be a model and $\phi \not= \ominus_c$ a non-redundant atom of $M$. There is at least one pinning duple that is discriminated by $\phi$ and only by $\phi$. 
\end{theorem}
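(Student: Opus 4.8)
The plan is to reduce the statement directly to the definition of redundancy (definition \ref{redundantAtomDefi}) by first pinning down exactly which atoms of $M$ discriminate a given pinning duple. First I would record the setup: since $\phi \neq \ominus_c$, the set $C - {\bf{U}}^{c}(\phi)$ is nonempty, so the pinning term $T_{\phi}$ is a genuine term with ${\bf{C}}(T_{\phi}) = C - {\bf{U}}^{c}(\phi)$, and for each constant $c$ with $\phi < c$ the pinning duple $(c, T_{\phi})$ is defined. Because $\phi$ is an atom of $M$ we have $\phi \in M$, and — exactly as already argued inside the proof of theorem \ref{redundantAtom} — from $c \in {\bf{U}}^{c}(\phi)$ together with ${\bf{C}}(T_{\phi}) \cap {\bf{U}}^{c}(\phi) = \emptyset$ and theorem \ref{atomIndependentFromTheRestTheorem} (equivalently theorem \ref{homomorphismTheorem}(ii)), $\phi$ discriminates $(c, T_{\phi})$ in $M$. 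So every pinning duple of $\phi$ is discriminated by $\phi$; the content of the theorem is that one of them is discriminated by $\phi$ alone.

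Next I would compute, for an arbitrary atom $\eta \in M$ with $\eta \neq \phi$, precisely when $\eta$ discriminates the pinning duple $(c, T_{\phi})$. Using theorem \ref{homomorphismTheorem}(ii) and $\eta \in M$, the condition $\eta < \nu_M(c)$ is equivalent to $c \in {\bf{U}}^{c}(\eta)$, while $\eta \not< \nu_M(T_{\phi})$ is equivalent to ${\bf{C}}(T_{\phi}) \cap {\bf{U}}^{c}(\eta) = \emptyset$, i.e.\ to ${\bf{U}}^{c}(\eta) \subseteq C - {\bf{C}}(T_{\phi}) = {\bf{U}}^{c}(\phi)$. Hence $\eta$ discriminates $(c, T_{\phi})$ in $M$ if and only if $c \in {\bf{U}}^{c}(\eta) \subseteq {\bf{U}}^{c}(\phi)$; and since $\eta \neq \phi$, axiom AS5 upgrades this inclusion to ${\bf{U}}^{c}(\eta) \subsetneq {\bf{U}}^{c}(\phi)$, which by definition \ref{wider} says exactly that $\phi$ is \emph{wider} than $\eta$. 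In words: some atom of $M$ other than $\phi$ discriminates $(c, T_{\phi})$ iff there is an atom $\eta$ in $M$ with $\eta < c$ such that $\phi$ is wider than $\eta$.

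Finally I would argue by contraposition. Suppose no pinning duple of $\phi$ is discriminated by $\phi$ alone, i.e.\ for every constant $c$ with $\phi < c$ the duple $(c, T_{\phi})$ is discriminated by some atom of $M$ different from $\phi$. By the equivalence just established, this says that for every constant $c$ with $\phi < c$ there is an atom $\eta$ in $M$ with $\eta < c$ and $\phi$ wider than $\eta$, which is exactly the definition of $\phi$ being \emph{redundant} with $M$. This contradicts the hypothesis that $\phi$ is non-redundant in $M$, so at least one pinning duple must be discriminated by $\phi$ and by no other atom of $M$.

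The argument is essentially bookkeeping and I do not expect a serious obstacle; the one step that needs care is the middle paragraph — converting ``$\eta$ stays below $T_{\phi}$'' into the inclusion ${\bf{U}}^{c}(\eta) \subseteq {\bf{U}}^{c}(\phi)$ — which must invoke theorem \ref{homomorphismTheorem}(ii) correctly (it is a biconditional only because both $\phi$ and $\eta$ are in $M$) and then use AS5 to make the inclusion strict. Everything else is a direct unwinding of the definitions of pinning term, pinning duple, ``wider'', and ``redundant''.
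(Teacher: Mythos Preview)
Your proposal is correct and follows essentially the same approach as the paper's proof: show that $\phi$ discriminates every pinning duple, identify that any other atom $\eta$ of $M$ discriminating $(c,T_{\phi})$ must satisfy $\eta < c$ with $\phi$ wider than $\eta$, and conclude by contraposition from the definition of redundancy. Your version is simply more explicit about the intermediate bookkeeping (invoking theorem \ref{homomorphismTheorem}(ii), axiom AS5, and definition \ref{wider}), whereas the paper compresses these steps into a single sentence.
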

\begin{proof}
As in the proof of theorem \ref{redundantAtom}, for each constant $c \in {\bf{U}}^{c}(\phi)$ we have $\phi \in {{\bf{dis}}_M}(c, T_{\phi})$ where $T_{\phi}$ is the pinning term of $\phi$. This is a consequence of theorem \ref{atomIndependentFromTheRestTheorem} and $F_C(\emptyset) \models (\phi \not< T_{\phi})$. If, for a pinning duple $(c, T_{\phi})$ there is an atom $\varphi \not= \phi$ of $M$ that discriminates $(c, T_{\phi})$ then $\phi$ is wider than $\varphi$ and, if the same is true for every pinning duple in  $PR(\phi)$, then $\phi$ is redundant with $M$, which is against our assumptions. Therefore, there should be at least one constant $c$ such that $(c, T_{\phi})$ is discriminated only by $\phi$.
\end{proof}	
\bigskip
\bigskip

We introduce now the union of atoms, $\phi \bigtriangledown \psi$, that is an an atom with an upper constant segment ${\bf{U}}^c(\phi \bigtriangledown \psi) = {\bf{U}}^c(\phi) \cup {\bf{U}}^c(\psi)$.

\bigskip
\begin{theorem} \label{unionProperties}
Let $x$ be a regular element of an atomized semilattice model M and let $\phi$, $\psi$ and $\eta$ be atoms of M. The union of atoms has the properties:
\[
i) \,\, \phi \bigtriangledown \phi = \phi, 
\]\[
ii) \,\, \bigtriangledown \text{ is commutative and associative},  
\]\[
iii)  \,\, \phi < x  \Rightarrow  (\phi \bigtriangledown \psi < x), 
\]\[
iv)  \,\, (\phi \bigtriangledown \psi < x)  \Leftrightarrow (\phi < x) \vee  (\psi < x),
\]\[
v)  \,\, (\phi \bigtriangledown \psi < x) \wedge (\phi \not< x)  \Rightarrow (\psi < x).
\]\[
vi)  \,\, \phi \text{ is wider or equal to } \eta \text{ if and only if }  \phi = \phi \bigtriangledown \eta.
\]
\end{theorem}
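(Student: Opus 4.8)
The plan is to reduce every clause to elementary facts about upper constant segments, using two ingredients: that an atom is uniquely determined by its upper constant segment ${\bf{U}}^c(\cdot)$ (the convention fixed after AS5 and encoded in definition \ref{atomInAsAsetDefinition}), and the model-independent characterization $(\phi < a) \Leftrightarrow \exists c:((c\in C)\wedge(\phi < \nu_M(c) \leq a))$ of theorem \ref{atomicSegmentFromTermTheorem}(iii). Since ${\bf{U}}^c(\phi)$ and ${\bf{U}}^c(\psi)$ are non-empty by AS1, so is ${\bf{U}}^c(\phi \bigtriangledown \psi) = {\bf{U}}^c(\phi) \cup {\bf{U}}^c(\psi)$, hence $\phi \bigtriangledown \psi$ is a legitimate atom.

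For (i) and (ii) I would simply note that ${\bf{U}}^c(\phi \bigtriangledown \phi) = {\bf{U}}^c(\phi)\cup{\bf{U}}^c(\phi) = {\bf{U}}^c(\phi)$, and that idempotence, commutativity and associativity of set union transfer verbatim to $\bigtriangledown$; since atoms with equal upper constant segments coincide, these identities hold as stated.

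The core is (iv), from which (iii) and (v) follow for free. By theorem \ref{atomicSegmentFromTermTheorem}(iii), $\phi \bigtriangledown \psi < x$ holds iff there is a constant $c \in {\bf{U}}^c(\phi \bigtriangledown \psi) = {\bf{U}}^c(\phi)\cup{\bf{U}}^c(\psi)$ with $\nu_M(c) \leq x$; splitting the union into the two membership cases yields exactly $(\phi < x)\vee(\psi < x)$. Clause (iii) is then the specialization $\phi < x \Rightarrow (\phi < x)\vee(\psi < x) \Leftrightarrow (\phi \bigtriangledown \psi < x)$, and (v) is the disjunctive syllogism: $(\phi \bigtriangledown \psi < x)\wedge(\phi \not< x)$ forces the right disjunct $\psi < x$. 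For (vi), $\phi = \phi \bigtriangledown \eta$ iff ${\bf{U}}^c(\phi) = {\bf{U}}^c(\phi)\cup{\bf{U}}^c(\eta)$ iff ${\bf{U}}^c(\eta)\subseteq{\bf{U}}^c(\phi)$, which by definition \ref{wider} says precisely that $\phi$ is wider than $\eta$ or equal to $\eta$.

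The one point worth stating carefully, rather than a genuine obstacle, is the reading of ``$\phi \bigtriangledown \psi < x$'' when $\phi \bigtriangledown \psi$ need not be an atom in $M$: the comparison is taken in the model-independent sense, i.e.\ through the characterization of theorem \ref{atomicSegmentFromTermTheorem}(iii) in terms of ${\bf{U}}^c(\cdot)$ and the order among constants of $M$, which is insensitive to the chosen term representative of $x$. Once this is fixed, the whole argument is pure Boolean and set manipulation, so I expect no real difficulty.
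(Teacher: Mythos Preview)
Your proposal is correct and follows essentially the same route as the paper: both reduce everything to the set identity ${\bf{U}}^c(\phi\bigtriangledown\psi)={\bf{U}}^c(\phi)\cup{\bf{U}}^c(\psi)$ together with the characterization of $\phi<x$ via a constant $c\in{\bf{U}}^c(\phi)$ with $c\leq x$ from theorem~\ref{atomicSegmentFromTermTheorem}. The only cosmetic difference is that the paper proves (iii) directly and then cites it for one direction of (iv), whereas you prove (iv) first and read (iii) off as a special case; your remark that $\phi\bigtriangledown\psi$ need not lie in the given atomization of $M$ and that the comparison is taken in the model-independent sense is a point the paper leaves implicit.
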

\begin{proof}
According to definition \ref{atomInAsAsetDefinition} an atom is determined by the constants in its upper segment, therefore atom $\phi \bigtriangledown \psi$ is fully defined by ${\bf{U}}^c(\phi \bigtriangledown \psi) = {\bf{U}}^c(\phi) \cup {\bf{U}}^c(\psi)$. Propositions i and ii follow from the idempotence, commutativity and associativity of the union of sets.  Let $t$ be a term of $F_C(\emptyset)$ and let $\nu_M$ be the natural homomorphism of $F_C(\emptyset)$ onto $M$. Theorem \ref{atomicSegmentFromTermTheorem} says that $ {\bf{L}}^a_M(\nu_M(t))  = \{ \phi \in M: {\bf{C}}(t) \cap {\bf{U}}^c(\phi) \not= \emptyset \},
$ which permits to calculate the lower atomic segment of an element $\nu_M(t)$, represented with any term $t$, by using the component constants of the term.  Let $t$ be any term such that $x = \nu_M(t)$:  \\
(iii) $\phi < x$ implies that exists $c \in {\bf{C}}(t)$ such that $\phi < c$ and, hence, $c \in {\bf{U}}^c(\phi) \subseteq {\bf{U}}^c(\phi \bigtriangledown \psi)\,$ so $\,\phi \bigtriangledown \psi  < c \leq x$.  \\
(iv) Right to left, is the same than proposition (iii). Left to right; the left hand side can be written as $\exists t \exists c ((x = \nu_M(t)) \, \wedge \, c \in {\bf{C}}(t) \wedge (\phi \bigtriangledown \psi < c))$, which implies $c \in {\bf{U}}^c(\phi) \cup {\bf{U}}^c(\psi)$ and then $(\phi < c) \vee (\psi < c)$ that, together with $c \leq x$, yields $(\phi < x) \vee (\psi < x)$.  \\ 
(v) is a trivial (but useful) consequence of proposition (iv).  \\ 
(vi) $\phi$ is wider than, or equal to, atom $\eta$ if and only if for every constant $c$, $(\eta < c) \Rightarrow (\phi < c)$ or, in other words, ${{\bf{U}}^{c}}(\eta) \subseteq {{\bf{U}}^{c}}(\phi)$. It follows that ${\bf{U}}^c(\phi \bigtriangledown \eta) = {\bf{U}}^c(\phi) \cup {\bf{U}}^c(\eta) = {\bf{U}}^c(\phi)$. Hence, $\phi$ is wider than or equal to atom $\eta$ if and only if $\phi = \phi \bigtriangledown \eta$.  
\end{proof}
\bigskip

\begin{theorem} \label{redundantIsUnionOfAtomsTheorem}
Let $M$ be an atomized model over a finite set $C$ of constants. Let $\phi$ be an atom that may or may not be in the atomization of $M$. \\
i) $\phi$ is redundant with $M$ if and only if is a union $\phi = \bigtriangledown_{i} \eta_{i}$ of atoms of $M$ such that $\forall i (\phi \not= \eta_{i})$. \\
ii) $\phi$ is redundant with $M$ if and only if is a union of two or more non-redundant atoms of $M$.
\end{theorem}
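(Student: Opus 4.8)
The plan is to work entirely with upper constant segments, using that an atom is determined by its non-empty set ${\bf{U}}^{c}$ (Definition~\ref{atomInAsAsetDefinition}, AS5), that ``$\phi$ wider than $\eta$'' means $\eta\neq\phi$ and ${\bf{U}}^{c}(\eta)\subsetneq{\bf{U}}^{c}(\phi)$ (Definition~\ref{wider}), and that ${\bf{U}}^{c}(\bigtriangledown_i\eta_i)=\bigcup_i{\bf{U}}^{c}(\eta_i)$ (Theorem~\ref{unionProperties}). I read part~(ii), like part~(i), with the decomposing atoms required to be distinct from $\phi$; this is needed for the ``if'' direction, since otherwise $\phi=\phi\bigtriangledown\eta$ holds whenever $\phi$ is wider than $\eta$ and would make even non-redundant atoms qualify.

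First I would do the ``if'' direction of (i). Assume $\phi=\bigtriangledown_i\eta_i$ with each $\eta_i$ an atom of $M$ and $\eta_i\neq\phi$. For a constant $c$ with $\phi<c$, i.e. $c\in{\bf{U}}^{c}(\phi)=\bigcup_i{\bf{U}}^{c}(\eta_i)$, pick $j$ with $c\in{\bf{U}}^{c}(\eta_j)$; then $\eta_j<c$, $\eta_j$ is in $M$, and ${\bf{U}}^{c}(\eta_j)\subseteq{\bf{U}}^{c}(\phi)$ with the inclusion strict because $\eta_j\neq\phi$ forces ${\bf{U}}^{c}(\eta_j)\neq{\bf{U}}^{c}(\phi)$. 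So $\phi$ is wider than $\eta_j$, which is exactly Definition~\ref{redundantAtomDefi}: $\phi$ is redundant with $M$. For the converse, assume $\phi$ is redundant with $M$; ${\bf{U}}^{c}(\phi)$ is non-empty, and for each $c\in{\bf{U}}^{c}(\phi)$ redundancy supplies an atom $\eta_c$ in $M$ with $c\in{\bf{U}}^{c}(\eta_c)\subsetneq{\bf{U}}^{c}(\phi)$. Then $\bigcup_{c\in{\bf{U}}^{c}(\phi)}{\bf{U}}^{c}(\eta_c)$ is contained in ${\bf{U}}^{c}(\phi)$ (each term is) and contains it (each $c$ lies in the $c$-th term), hence equals it, so by Theorem~\ref{unionProperties} $\phi=\bigtriangledown_{c\in{\bf{U}}^{c}(\phi)}\eta_c$ is a union of atoms of $M$ all different from $\phi$ --- and necessarily at least two distinct ones, since a single $\eta\neq\phi$ cannot satisfy ${\bf{U}}^{c}(\eta)={\bf{U}}^{c}(\phi)$.

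For part~(ii), the ``if'' direction is immediate: a non-redundant atom in $M$ is in particular an atom of $M$, so this is the ``if'' direction of (i). For the ``only if'' direction I would refine the witnesses $\eta_c$ above to be non-redundant. Fix $c\in{\bf{U}}^{c}(\phi)$ and consider the family $\mathcal{E}_c$ of atoms $\eta$ in $M$ with $c\in{\bf{U}}^{c}(\eta)\subsetneq{\bf{U}}^{c}(\phi)$; it is non-empty (redundancy of $\phi$) and finite (there are at most $2^{|C|}-1$ atoms over $C$). Choose $\eta_c\in\mathcal{E}_c$ with $|{\bf{U}}^{c}(\eta_c)|$ minimal. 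If $\eta_c$ were redundant with $M$ then, applying Definition~\ref{redundantAtomDefi} at the constant $c\in{\bf{U}}^{c}(\eta_c)$, there would be an atom $\eta'$ in $M$ with $c\in{\bf{U}}^{c}(\eta')\subsetneq{\bf{U}}^{c}(\eta_c)\subsetneq{\bf{U}}^{c}(\phi)$, i.e. a member of $\mathcal{E}_c$ with strictly fewer constants --- contradicting minimality. Hence each $\eta_c$ is a non-redundant atom in $M$, and exactly as in the converse of (i), $\phi=\bigtriangledown_{c\in{\bf{U}}^{c}(\phi)}\eta_c$ exhibits $\phi$ as a union of at least two distinct non-redundant atoms of $M$.

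I expect the minimality step in the ``only if'' direction of (ii) to be the only real obstacle: one must see that iteratively replacing a redundant witness by a strictly narrower atom below $c$ terminates, which the cardinality-minimal choice in the finite family $\mathcal{E}_c$ handles cleanly (an induction on $|{\bf{U}}^{c}(\eta)|$ would do equally well). The rest is routine bookkeeping with the dictionary between atoms and their upper constant segments and with finite unions of sets (Theorem~\ref{unionProperties}).
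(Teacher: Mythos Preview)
Your proof is correct and follows essentially the same route as the paper. For part~(i) the arguments coincide: both directions are the same bookkeeping with ${\bf{U}}^{c}$ and Definition~\ref{redundantAtomDefi}. For part~(ii) there is a small stylistic difference worth noting: the paper takes the decomposition $\phi=\bigtriangledown_i\eta_i$ obtained in~(i) and recursively applies~(i) to each redundant $\eta_i$, descending along strictly shrinking upper constant segments until only non-redundant atoms remain (and then invokes associativity of $\bigtriangledown$ to flatten). You instead fix each constant $c\in{\bf{U}}^{c}(\phi)$ and pick a witness $\eta_c$ of minimal $|{\bf{U}}^{c}(\eta_c)|$ directly, arguing that a redundant $\eta_c$ would yield a strictly smaller member of $\mathcal{E}_c$. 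Both are finite-descent arguments on $|{\bf{U}}^{c}|$; yours avoids the recursive unfolding and gives non-redundant witnesses in one step, while the paper's version makes the connection to~(i) more explicit. Your explicit remark that the $\eta_i$ in~(ii) must be read as distinct from $\phi$ (else $\phi=\phi\bigtriangledown\eta$ would break the ``if'' direction) is a clarification the paper leaves implicit.
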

\begin{proof}
If $\phi$ is redundant for each constant such $that \phi < c$ there is an atom $\eta_i$ of $M$ such that $\phi$ is wider than $\eta_i$ and $\eta_i < c$. Theorem \ref{unionProperties}(vi) states that if $\phi$ is wider than $\eta_i$ then $\phi = \phi \bigtriangledown \eta_i$ and, since for each constant $c \in {{\bf{U}}^{c}}(\phi)$ there is some $\eta_i$ such that $\eta_i < c$ then ${{\bf{U}}^{c}}(\phi) = \cup_{i}{{\bf{U}}^{c}}(\eta_{i})$. It follows $\phi = \bigtriangledown_{i} \eta_{i}$. Conversely if $\phi$ is a union of atoms, $\phi = \bigtriangledown_{i} \eta_{i}$, then for each constant such $\phi < c$ there is some atom $\eta_{i}$ that contains $c$ in its upper constant segment with $\phi$ wider than $\eta_{i}$, and then $\phi$ is redundant with $M$. This proves (i). \\
Since $C$ is finite then $\vert {{\bf{U}}^{c}}(\eta_i) \vert < \vert {{\bf{U}}^{c}}(\phi) \vert$. If any of the atoms $\eta_i$ is redundant with $M$, then $\eta_i$ can also be expressed as a union of atoms of $M$. We can continue replacing the redundant atoms for others with ever smaller upper constant segments until reaching non-redundant atoms of $M$. Because $\bigtriangledown$ is associative there is at least one decomposition of $\phi$ as a union of non-redundant atoms of $M$. 
\end{proof}
\bigskip

\begin{theorem} \label{uniqueAtomization}
i) Two atomizations of the same model have the same non-redundant atoms. \\
ii) Any model has a unique atomization without redundant atoms. 
\end{theorem}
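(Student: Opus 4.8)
The plan is to run everything off Theorem~\ref{redundantAtom}, using throughout that both ``$\phi$ is redundant with $M$'' and ``$\phi\in M$'' are properties of the pair (model, atom) that do not mention any particular atomization. I would prove part (i) first. Let $A$ and $B$ be two atomizations of $M$ and let $\phi\in A$ be non-redundant with $M$; by symmetry it suffices to show $\phi\in B$, since then $\phi$ is automatically a non-redundant atom of $B$ as well. From $\phi\in A$ and $[A]=M$ we get $[A\cup\{\phi\}]=[A]=M$, so $\phi\in M$ and hence $\phi$ is a non-redundant atom of $M$. If $\phi\neq\ominus_c$, Theorem~\ref{uniqueRelationTheorem} produces a pinning duple $(c,T_{\phi})$ which, inside $M$, is discriminated by $\phi$ and by no other atom of $M$; since $\phi\in M$ discriminates it, AS3 in $M$ gives $M\models(c\not\leq T_{\phi})$, and then AS3 in the atomization $B$ forces some atom $\psi\in B$ to discriminate $(c,T_{\phi})$. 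But $\psi\in B$ implies $\psi\in M$, so the uniqueness clause of Theorem~\ref{uniqueRelationTheorem} gives $\psi=\phi$, i.e.\ $\phi\in B$. If instead $\phi=\ominus_c$ is non-redundant with $M$, Theorem~\ref{ceroTheorem}(ii) supplies a constant $q$ with $M\models\forall x\,(q\leq x)$, whose only atom below it is $\ominus_c$ (by AS5, as in the proof of Theorem~\ref{ceroTheorem}); since $B$ satisfies AS6 some atom of $B$ lies below $q$, and that atom is again $\ominus_c$. Either way $\phi\in B$, which proves (i).

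For part (ii) I would argue existence and uniqueness separately. Existence: fix any atomization $A$ of $M$ (one exists by Theorem~\ref{atomizationExistsTheorem}); it is finite because atoms are non-empty subsets of the finite set $C$. Since ``redundant with $M$'' does not depend on which atoms are present, deleting atoms neither creates nor destroys redundancy, so I delete from $A$, one at a time, the members of the fixed set $R=\{\psi\in A:\psi\ \text{redundant with}\ M\}$; by Theorem~\ref{redundantAtom} each intermediate set is still an atomization of $M$, and after $|R|$ steps we are left with $A^{*}=A\setminus R$, an atomization of $M$ with no redundant atom. Uniqueness: if $A^{*}$ and $B^{*}$ are two atomizations of $M$ without redundant atoms, then every atom of $A^{*}$ lies in $A^{*}$ (hence in $M$) and is non-redundant with $M$, so $A^{*}$ coincides with its own set of non-redundant atoms, and likewise $B^{*}$; by part (i) these sets are equal, whence $A^{*}=B^{*}$. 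Equivalently, the unique redundancy-free atomization is the set of all atoms that are in $M$ and non-redundant with $M$.

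The crux I expect is the implication, inside part (i), that an atom which cannot be deleted from its own atomization must appear in every atomization of $M$; the lever is the sharper form of Theorem~\ref{uniqueRelationTheorem}, that a non-redundant atom singlehandedly owns a pinning duple, which is precisely what lets AS3 in a foreign atomization $B$ regenerate $\phi$ itself rather than merely some atom below $c$. Treating $\ominus_c$ by hand via Theorem~\ref{ceroTheorem} is a minor case split, and the remaining ingredients — finiteness of atomizations, the iterated application of Theorem~\ref{redundantAtom}, and the model-only character of redundancy — are routine bookkeeping.
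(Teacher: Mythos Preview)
Your proof is correct and takes a genuinely different route from the paper's.

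The paper argues (i) via Theorem~\ref{redundantIsUnionOfAtomsTheorem}: it starts with two redundancy-free atomizations $A,B$, picks $\phi\in B$, observes $[A\cup\{\phi\}]=M$, and supposes for contradiction that $\phi$ is redundant with $[A]$; then $\phi=\bigtriangledown_i\eta_i$ for atoms $\eta_i\in A$, each $\eta_i$ is in turn in $B$ or a union of atoms of $B$, and reassembling exhibits $\phi$ as a union of atoms of $B$, contradicting non-redundancy. Your argument instead pivots on Theorem~\ref{uniqueRelationTheorem}: a non-redundant $\phi\neq\ominus_c$ singlehandedly owns a pinning duple that no other atom of $M$ discriminates, so any atomization $B$ of $M$ is forced, by AS3, to contain $\phi$ itself. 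This is more direct and, unlike the paper's proof (which literally treats only redundancy-free $A$ and $B$ and leaves the general form of (i) implicit), handles arbitrary atomizations in one pass. The separate treatment of $\ominus_c$ via Theorem~\ref{ceroTheorem} and AS6 neatly covers the case Theorem~\ref{uniqueRelationTheorem} excludes. What the paper's approach buys is that it stays closer to the union-of-atoms viewpoint used throughout Section~\ref{freedomFullCrossingAndRedundacy}; what yours buys is a shorter path and a cleaner statement that every non-redundant atom of $M$ must appear in every atomization.

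One minor remark on (ii): your appeal to Theorem~\ref{atomizationExistsTheorem} is a forward reference in the paper's ordering (and that theorem does not depend on this one, so there is no circularity). You can avoid it entirely by simply starting from a given atomization, since the statement is about atomized models.
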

\begin{proof}
Let $A$ and $B$ be two atomizations of a model M without redundant atoms. Choose a non-redundant atom $\phi$ in $B$ and consider the model $[A + \{\phi\}]$ spawned by $\phi$ and the atoms of $A$.  It is clear that $A + \{\phi\}$ spawns the same model than $A$, otherwise there would be a duple that is positive for $[A]$ discriminated only by $\phi$ in $[A + \{\phi\}]$ and, hence, also discriminated by $\phi$ in $[B]$, i.e. negative for $[B]$, contradicting that $A$ and $B$ are atomizations of the same model. From theorem \ref{redundantAtom} either $\phi$ is a non-redundant atom in $[A]$ or $\phi$ is redundant with $[A]$. Assume $\phi$ is redundant with $[A]$. There is a set $E_{\phi}$ of atoms of $A$ such that $\phi$ is a union of the atoms in $E_{\phi}$ (see theorem \ref{redundantIsUnionOfAtomsTheorem}). Choose an atom $\eta$ in $E_{\phi}$ and consider the model $[B + \{\eta\}]$. The same reasoning applies, so we should get that either $\eta$ is in $B$ or is redundant with atoms of $B$. We can substitute $\eta$ with the atoms that make $\eta$ redundant with $B$, and do the same for every atom of $E_{\phi}$ to form a set $E'_{\phi}$ of atoms of $B$. It follows that $\phi$ is a union of the atoms in $E'_{\phi}$ which implies that $\phi$ is redundant with $B$, against our assumptions. Therefore, $\phi$ cannot be redundant with $A$ and then $\phi$ should be a non-redundant atom in $A$, which proves proposition i, i.e. that all the non-redundant atoms of $B$ are in $A$ and vice-versa. Finally, proposition ii follows from proposition i and theorem \ref{redundantAtom}. 
 \end{proof}	
\bigskip
\bigskip

If $A$ and $B$ are models or sets of atoms, we use $A + B$ to represent the model $[A \cup B]$, a model atomized by the atoms of $A$ and $B$.

\bigskip
\begin{theorem} \label{unionOfModels}
Let $A$ and $B$ be two sets of atoms that atomize models $A$ and $B$ over $C$. \\
$Th^{+}_0(A + B) = Th^{+}_0(A) \cap Th^{+}_0(B)$ and $Th^{-}_0(A + B) = Th^{-}_0(A) \cup Th^{-}_0(B)$.
\end{theorem}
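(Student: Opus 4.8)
The plan is to reduce both identities to the discriminant form of axiom AS3 and then observe that the statement about negative theories is merely the De~Morgan dual of the one about positive theories. Recall that for a model atomized by a set of atoms $S$, axiom AS3 says that $[S] \models (r_L \leq r_R)$ if and only if no atom $\phi \in S$ discriminates $r = (r_L,r_R)$, i.e. no $\phi \in S$ satisfies $\phi < r_L$ and $\phi \not< r_R$; and, crucially, by Theorem~\ref{atomicSegmentFromTermTheorem}(iv) together with Definition~\ref{atomInAsAsetDefinition}, whether a given atom $\phi$ discriminates a given duple $r$ depends only on ${\bf{U}}^c(\phi)$, ${\bf{C}}(r_L)$ and ${\bf{C}}(r_R)$, so it is independent of the ambient model. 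A small preliminary point: $A\cup B$ need not satisfy AS6 on its own, but then $A\cup B\cup\{\ominus_c\}$ does by Theorem~\ref{atomizationsAreModels}, and since $\ominus_c$ discriminates no duple over $C$ this changes no theory; hence we may freely assume $A\cup B$ spawns the model $A+B$ (in fact, since $A$ already satisfies AS6, for each constant $c$ some atom of $A$ lies below $c$, so $A\cup B$ does satisfy AS6 directly).

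For the positive theory, fix a duple $r$ over $C$. If $r\in Th^{+}_0(A+B)$, then by AS3 no atom of $A\cup B$ discriminates $r$; in particular no atom of $A$ and no atom of $B$ does, so $r\in Th^{+}_0(A)\cap Th^{+}_0(B)$ (this inclusion is also immediate from Theorem~\ref{atomsCanBeRemovedTheorem}(i), applied with $K=C$ to the subsets $A$ and $B$ of $A\cup B$). Conversely, if $r\in Th^{+}_0(A)\cap Th^{+}_0(B)$, then AS3 applied to $[A]$ shows no atom of $A$ discriminates $r$ and AS3 applied to $[B]$ shows no atom of $B$ discriminates $r$; since every atom of $A\cup B$ lies in $A$ or in $B$, no atom of $A\cup B$ discriminates $r$, and AS3 applied to $A+B$ gives $r\in Th^{+}_0(A+B)$. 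Hence $Th^{+}_0(A+B)=Th^{+}_0(A)\cap Th^{+}_0(B)$.

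For the negative theory, observe that for every duple $r$ over $C$ and every model $M$ over $C$, exactly one of $M\models r^{+}$ and $M\models r^{-}$ holds, directly from the definitions of $\models r^{+}$ and $\models r^{-}$; thus $Th^{-}_0(M)$ is precisely the complement of $Th^{+}_0(M)$ inside the set of all duples over $C$. Combining this with the identity just established and De~Morgan's law gives $Th^{-}_0(A+B)=\overline{Th^{+}_0(A)\cap Th^{+}_0(B)}=\overline{Th^{+}_0(A)}\cup\overline{Th^{+}_0(B)}=Th^{-}_0(A)\cup Th^{-}_0(B)$.

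I do not expect a genuine obstacle: the argument is essentially bookkeeping with AS3. The two points that require a moment of care are the status of $A\cup B$ as an atomization (the $\ominus_c$ caveat above) and the model-independence of the relation ``$\phi$ discriminates $r$'', which is what allows the discriminating-atom condition to be transported among $[A]$, $[B]$ and $A+B$; both are supplied by Theorem~\ref{atomizationsAreModels} and Theorem~\ref{atomicSegmentFromTermTheorem}(iv) respectively.
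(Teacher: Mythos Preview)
Your proof is correct and follows essentially the same approach as the paper: both arguments rest on the observation that an atom of $A\cup B$ discriminates $r$ if and only if it does so in whichever of $A$ or $B$ it came from, which the paper packages as the set identity ${\bf{dis}}_{A+B}(r)={\bf{dis}}_A(r)\cup{\bf{dis}}_B(r)$ and you phrase directly via AS3. The only cosmetic difference is that the paper establishes the negative-theory identity first and then complements, while you do the reverse; your explicit handling of AS6 for $A\cup B$ is a small point the paper leaves implicit.
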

\begin{proof} 
Let $r =(a, b)$ be a duple over $C$. From the third axiom, AS3, and theorem \ref{atomicSegmentFromTermTheorem}(iv) follows that $r \in Th^{-}_0(A \cup B)$ if and only if the discriminant ${\bf{dis}}_{A + B}(r)$ is not empty. Operating on the definition of discriminant ${\bf{dis}}_{A + B}(r)= {{\bf{L}}^{a}_{A + B}}(a) - {{\bf{L}}^{a}_{A + B}}(b) = {{\bf{L}}^{a}_{A}}(a) \cup{ {\bf{L}}^{a}_{B}}(a) - {{\bf{L}}^{a}_{A}}(b) \cup{ {\bf{L}}^{a}_{B}}(b)$. Due to axiom AS5 sets $A$ and $B$ may have a non-empty intersection. However, whether an atom is in the lower segment of terms $a$ or $b$ depends only on the atom, so ${{\bf{L}}^{a}_{A}}(a) \cap { {\bf{L}}^{a}_{B}}(b) \subseteq {{\bf{L}}^{a}_{A}}(a) \cap { {\bf{L}}^{a}_{A}}(b)$ and ${{\bf{L}}^{a}_{B}}(a) \cap { {\bf{L}}^{a}_{A}}(b) \subseteq {{\bf{L}}^{a}_{B}}(a) \cap { {\bf{L}}^{a}_{B}}(b)$, and from here if follows that ${{\bf{L}}^{a}_{A}}(a) \cup{ {\bf{L}}^{a}_{B}}(a) - {{\bf{L}}^{a}_{A}}(b) \cup{ {\bf{L}}^{a}_{B}}(b) = ({{\bf{L}}^{a}_{A}}(a) - {{\bf{L}}^{a}_{A}}(b)) \cup ({ {\bf{L}}^{a}_{B}}(a) - { {\bf{L}}^{a}_{B}}(b))$ and, finally, ${\bf{dis}}_{A + B}(r) = {\bf{dis}}_A(r)  \cup  {\bf{dis}}_B(r)$. Therefore, $r$ is discriminated in $A + B$ if and only if it is discriminated either in $A$ or in $B$, and then $Th^{-}_0(A + B) = Th^{-}_0(A) \cup Th^{-}_0(B)$. Taking the complementary sets we get $Th^{+}_0(A + B) = Th^{+}_0(A) \cap Th^{+}_0(B)$ or, in other words, $r$ is not discriminated in $A + B$ if and only if $r$ is not discriminated neither in $A$ nor in $B$.
\end{proof}	
\bigskip
\bigskip

\begin{theorem} \label{unionWithFreer}
Let $A$ and $B$ be two sets of atoms that atomize models $A$ and $B$, with model $A$ freer or as free as $B$.\\
 i) The model $A + B$ spawned by the atoms of $A$ and the atoms of $B$ is the same as the model spawned by $A$ alone. \\
 ii) The atoms of $B$ are atoms of $A$ or are redundant with the atoms of $A$.  \\
 iii) $A$ is freer than $B$ if and only if the atoms of $B$ are atoms of $A$ or unions of atoms of $A$.
\end{theorem}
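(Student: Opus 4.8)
The plan is to establish the three parts in the stated order, since (ii) leans on (i) and (iii) on (ii). First I record the hypothesis in theory form: by definition~\ref{freerModelDefinition}, ``model $A$ is freer or as free as $B$'' says $Th^{-}_0(B)\subseteq Th^{-}_0(A)$, equivalently, passing to complements inside the set of duples over $C$, $Th^{+}_0(A)\subseteq Th^{+}_0(B)$. For (i) I would feed this into theorem~\ref{unionOfModels}, which gives $Th^{+}_0(A+B)=Th^{+}_0(A)\cap Th^{+}_0(B)$ and $Th^{-}_0(A+B)=Th^{-}_0(A)\cup Th^{-}_0(B)$; the two inclusions collapse the right-hand sides to $Th^{+}_0(A)$ and $Th^{-}_0(A)$. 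Since a semilattice over $C$ is just a partition of $F_C(\emptyset)$ and $M\models(t=s)$ holds precisely when both $(t,s)$ and $(s,t)$ lie in $Th^{+}_0(M)$, the positive theory pins down the model; hence $A+B$ and $A$ have the same regular elements, which is (i) (axiom AS6 for $A+B$ being inherited from $A$).

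For (ii), fix $\psi\in B$ and look at $A\cup\{\psi\}$, which sits between $A$ and $A\cup B$ as sets of atoms. Monotonicity of freedom under the addition of atoms --- theorem~\ref{atomsCanBeRemovedTheorem}(ii), applied twice and absorbing the harmless $\ominus_c$ where needed to meet its hypotheses --- shows $[A+B]$ is as free or freer than $[A\cup\{\psi\}]$, which in turn is as free or freer than $[A]$. By (i) we have $[A+B]=[A]$, so this sandwich forces $[A\cup\{\psi\}]$ and $[A]$ to have the same negative (hence positive) theory, and by the ``theory determines the model'' remark above, $[A\cup\{\psi\}]=[A]$. Thus $\psi\in[A]$ in the sense of definition~\ref{isInDefinition}. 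If $\psi\notin A$ then $A\cup\{\psi\}$ is an atomization of $[A]$ from which deleting $\psi$ still atomizes $[A]$, so theorem~\ref{redundantAtom} makes $\psi$ redundant with $[A]$; if $\psi\in A$ it is already an atom of $A$. Either way $\psi$ is an atom of $A$ or redundant with $[A]$, which is (ii).

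For (iii) I would prove both directions. Forward: by (ii) every atom of $B$ is in $A$ or redundant with $[A]$; by theorem~\ref{redundantIsUnionOfAtomsTheorem}(ii) a redundant atom is a union of two or more non-redundant atoms of $[A]$, and these non-redundant atoms occur in every atomization of $[A]$, in particular in the set $A$ (theorem~\ref{uniqueAtomization}); so each atom of $B$ is an atom of $A$ or a union of atoms of $A$. Converse: suppose each atom of $B$ is an atom of $A$ or a proper union of atoms of $A$. By theorem~\ref{redundantIsUnionOfAtomsTheorem}(i) the atoms of the second kind are redundant with $[A]$, so adjoining the atoms of $B$ to $A$ one by one --- each adjunction permitted by theorem~\ref{redundantAtom}, since at every stage the model is still $[A]$ --- yields $[A\cup B]=[A]$. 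As $B\subseteq A\cup B$, theorem~\ref{atomsCanBeRemovedTheorem}(ii) (again absorbing $\ominus_c$) gives that $[A\cup B]$, hence $[A]$, is as free or freer than $[B]$, i.e.\ model $A$ is freer or as free as $B$, closing the equivalence.

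The routine parts are the Boolean bookkeeping with $Th^{\pm}_0$ and the repeated appeals to theorem~\ref{atomsCanBeRemovedTheorem} with $\ominus_c$ absorbed. The real pivot I expect is the sandwich step in (ii): turning a two-sided ``as free or freer'' comparison into an honest equality of models, which hinges on a finite semilattice over $C$ being completely determined by its positive theory. A secondary care point is that ``atoms of $A$'' must mean elements of the given set $A$ throughout, which is legitimate only because the non-redundant atoms of $[A]$ are forced into every atomization, $A$ included.
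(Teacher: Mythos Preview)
Your proof is correct and follows essentially the same route as the paper: establish $[A+B]=[A]$ via the positive/negative theories (the paper argues this directly, you invoke theorem~\ref{unionOfModels}, which amounts to the same computation), then deduce that every atom of $B$ is either in $A$ or redundant with $[A]$, and finally translate redundancy into unions of atoms. The only noticeable variation is in part~(ii): the paper simply cites theorem~\ref{uniqueAtomization}, whereas you run a sandwich argument $A\subseteq A\cup\{\psi\}\subseteq A\cup B$ and appeal to theorem~\ref{redundantAtom} directly---your version is a touch more elementary but reaches the identical conclusion.
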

\begin{proof} (i) Since $A$ is freer or as free as $B$ all the negative duples of $B$ are also negative in $A$. This means that a duple discriminated by an atom of $B$ is also discriminated by some atom of $A$. In addition, each positive duple of $A$ is also a positive duple of $B$ and also a positive duple of $A + B$. Therefore, a duple of $A + B$ is  positive if and only if is positive in $A$. Likewise, a duple is negative if and only if is negative in $A$ and then models $A + B$ and $A$ are equal.   \\
(ii) follows from theorem \ref{uniqueAtomization} and the fact that $A + B$ spawns the same model than $A$. \\
(iii) Left to right. Assume $A$ is freer than $B$. Consider the model $A + B$ spawned by the atoms of $A$ and the atoms of $B$. Proposition (i) tells us that if $A$ is freer than $B$ the model $A + B$ is equal to $A$. Theorem \ref{redundantAtom} says that each atom $\phi$ of $B$ is an atom of $A$ or is redundant with the atoms of $A$. In other words, either $\phi$ is an atom of $A$ or for each constant $c$ such that $\phi < c$ there is at least one atom $\eta < c$ in $A$ such that $\phi$ is wider than $\eta$, i.e. the set  ${{\bf{U}}^{c}}(\phi)$ contains the set ${{\bf{U}}^{c}}(\eta)$. Since there is an $\eta$ for each constant $c \in {{\bf{U}}^{c}}(\phi)$, if the set $\{\eta_{i}:i=1,...,n\}$ makes $\phi$ redundant in $A$, then ${{\bf{U}}^{c}}(\phi) = \cup_{i}{{\bf{U}}^{c}}(\eta_{i})$ and $\phi$ is the union $\phi = \bigtriangledown_{i} \eta_{i}$.  \\
Right to left.  Assume now that the atoms of $B$ are atoms of $A$ or unions of atoms of $A$. If an atom $\phi$ of $B$ is a union of atoms $\{\eta_{i}:i=1,...,n\}$ of $A$ then $\phi$ is redundant with $A$ and any duple discriminated by $\phi$ is discriminated by some of the atoms $\{\eta_{i}:i=1,...,n\}$. Therefore, any duple $r$ discriminated by an atom of $B$ is discriminated by at least one atom of $A$ so, $B \models r^{-}$ implies $A \models r^{-}$ and $A$ is freer or equal to $B$.
\end{proof}	

\bigskip
\bigskip

\begin{definition} \label{fullCrossing} Let $r = (r_L,r_R)$ be a duple and $M$ an atomized semilattice model, both over the same set of constants. The full crossing of $r$ in $M$, written $\square_{r}M$, is the model atomized by $(M - H) \cup (H\bigtriangledown B)$ where $H = {\bf{dis}}_M(r)$, $B = {\bf{L}}^a_M(r_R)$ and $H\bigtriangledown B \equiv \{\lambda \bigtriangledown \rho : (\lambda \in H) \wedge (\rho \in B) \}$.   
\end{definition} 

\bigskip
\bigskip

We define $F_C(R^{+})$ as the freest model that satisfies a set of positive duples $R^{+}$. The next theorem gives a recipe to construct freest models.

\bigskip

\begin{theorem} \label{fullCrossingIsFreestTheorem}
Let $M$ be an atomized model with or without redundant atoms and $r$ a duple such that $M \models r^{-}$. The full crossing of $r$ in $M$ is the freest model $F_C(Th_0^{+}(M) \cup\, r^{+})$.  
\end{theorem}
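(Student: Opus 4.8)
The plan is to verify, separately, the two properties that characterise $F_C(Th_0^{+}(M)\cup r^{+})$: that $\square_r M$ satisfies every duple of $Th_0^{+}(M)\cup r^{+}$, and that $\square_r M$ is freer or as free as every model that does. Together these say exactly that $\square_r M$ is a freest model for $Th_0^{+}(M)\cup r^{+}$, and since any two such are each freer or as free as the other, hence have the same theory and coincide, this is the asserted equality. Write $N_{\square}=(M-H)\cup(H\bigtriangledown B)$ for the atomization of $\square_r M$, with $H={\bf{dis}}_M(r)$ and $B={\bf{L}}^{a}_M(r_R)$; note $M\models r^{-}$ makes $H$ non-empty and AS6 makes $B$ non-empty. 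I will use constantly that whether an atom lies below a term depends only on the atom's upper constant segment and the term's component constants (theorem \ref{atomicSegmentFromTermTheorem}(iv)), so is model-independent, together with the union facts $\phi<t\Rightarrow\phi\bigtriangledown\psi<t$ and $\phi\bigtriangledown\psi<t\Leftrightarrow(\phi<t)\vee(\psi<t)$ of theorem \ref{unionProperties}.

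First, $\square_r M\models Th_0^{+}(M)\cup r^{+}$. For a positive duple $s=(s_L,s_R)$ of $M$, I show every atom $\zeta\in N_{\square}$ with $\zeta<s_L$ satisfies $\zeta<s_R$, which by theorem \ref{atomicSegmentFromTermTheorem}(vi) gives $\square_r M\models s^{+}$. If $\zeta\in M-H$, then $\zeta$ is an atom of $M$ below $s_L$, hence below $s_R$ since $M\models s^{+}$. If $\zeta=\phi\bigtriangledown\rho$ with $\phi\in H$, $\rho\in B$, then $\zeta<s_L$ forces $\phi<s_L$ or $\rho<s_L$; both $\phi$ and $\rho$ are atoms of $M$, so whichever is below $s_L$ is below $s_R$, and then $\zeta<s_R$ by theorem \ref{unionProperties}(iii). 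The argument for $r^{+}$ is similar but does not follow from the above (since $r\notin Th_0^{+}(M)$): if $\zeta\in M-H$ and $\zeta<r_L$ then $\zeta\in{\bf{L}}^{a}_M(r_L)$ and $\zeta\notin H$ force $\zeta\in{\bf{L}}^{a}_M(r_R)$, so $\zeta<r_R$; and if $\zeta=\phi\bigtriangledown\rho$ with $\rho\in B={\bf{L}}^{a}_M(r_R)$ then $\rho<r_R$, so $\zeta<r_R$ by theorem \ref{unionProperties}(iii).

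Second, $\square_r M$ is freer or as free as every $N\models Th_0^{+}(M)\cup r^{+}$. Since $N\models Th_0^{+}(M)$, the model $M$ is freer or as free as $N$, so by theorems \ref{unionWithFreer}, \ref{redundantIsUnionOfAtomsTheorem} and \ref{uniqueAtomization} every atom $\xi$ of $N$ is a union $\xi=\bigtriangledown_j\eta_j$ of non-redundant atoms $\eta_j$ of $M$ (hence members of the atomization $M$). I rewrite this as a union of atoms of $N_{\square}$: a component with $\eta_j\notin H$ lies in $M-H\subseteq N_{\square}$ and is retained; a component with $\eta_j\in H$ has $\eta_j<r_L$, so $\xi<r_L$ by theorem \ref{unionProperties}(iii), and then $N\models r^{+}$ with AS3 forces $\xi<r_R$, so by the finite-union form of theorem \ref{unionProperties}(iv) some component $\eta_k$ satisfies $\eta_k<r_R$, i.e.\ $\eta_k\in B$, and I replace $\eta_j$ by $\eta_j\bigtriangledown\eta_k\in H\bigtriangledown B\subseteq N_{\square}$. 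Each replacement partner $\eta_k$ is itself one of the $\eta_j$, so the union of all retained and replacement atoms has upper constant segment exactly $\bigcup_j{\bf{U}}^{c}(\eta_j)={\bf{U}}^{c}(\xi)$; thus $\xi$ equals a union of atoms of $N_{\square}$. Consequently every negative duple of $N$, being discriminated by some atom $\xi$ of $N$, is discriminated by one of the atoms of $N_{\square}$ whose union is $\xi$, so $\square_r M$ is freer or as free as $N$ (this is the right-to-left direction of theorem \ref{unionWithFreer}(iii)). Hence $\square_r M=F_C(Th_0^{+}(M)\cup r^{+})$.

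The crux is the replacement step: one must be certain that each crossed-out component $\eta_j\in H$ can be matched with a companion $\eta_k\in B$ coming from $\xi$'s own components, and the only thing that supplies this is the hypothesis $N\models r^{+}$, which through AS3 — together with $\xi$ being wider than each of its components, so $\xi<r_L$ — forces $\xi<r_R$ and hence a component below $r_R$. A minor but necessary point is to take the $\eta_j$ \emph{non-redundant} in $M$, so that they truly belong to the atomization appearing in $N_{\square}$; redundant atoms of $N$ need no separate treatment since they are themselves unions of $N$'s non-redundant atoms.
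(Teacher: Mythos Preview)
Your proof is correct, but the second half takes a genuinely different route from the paper's. For the first part (that $\square_r M$ models $Th_0^{+}(M)\cup r^{+}$) you and the paper argue essentially the same way. For freeness, however, the paper works \emph{duple by duple}: given $s$ with $\square_r M\models s^{+}$ and $M\models s^{-}$, it shows ${\bf dis}_M(s)\subseteq H$ (giving $M\models s_L\leq s_R\odot r_L$) and then, by analysing the crossed atoms $\lambda^{*}\bigtriangledown\rho$, that every $\rho\in B$ lies below $s_R$ (giving $M\models r_R\leq s_R$); from these two facts together with $r_L\leq r_R$ it derives $s_L\leq s_R$ as a formal consequence of $Th_0^{+}(M)\cup r^{+}$. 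Your argument instead works \emph{model by model}: you take an arbitrary $N\models Th_0^{+}(M)\cup r^{+}$, decompose each atom of $N$ into non-redundant atoms of $M$ via theorems \ref{unionWithFreer} and \ref{redundantIsUnionOfAtomsTheorem}, and use $N\models r^{+}$ to re-pair the $H$-components with $B$-companions, exhibiting every atom of $N$ as a union of atoms of $N_{\square}$.

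The paper's route is more self-contained and yields an explicit syntactic derivation of the new positive duples; yours is more structural and explains directly why the crossed atoms $H\bigtriangledown B$ generate, by union, every atom that can appear in any model of the extended theory. Your reliance on theorems \ref{redundantIsUnionOfAtomsTheorem}, \ref{uniqueAtomization} and \ref{unionWithFreer} is legitimate since all three precede this theorem in the paper. One small omission: you note that $B$ is non-empty but do not explicitly close the loop that $N_{\square}$ therefore satisfies AS6 (every constant retains an atom below it, since for $\lambda<c$ either $\lambda\in M-H$ or $\lambda\bigtriangledown\rho<c$ for any $\rho\in B$); the paper checks this at the end, and it is worth one sentence in yours.
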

\begin{proof}
Let $H \subseteq M$ be the discriminant of $r=(r_L,r_R)$, i.e. the set of atoms $\lambda$ such that $\lambda < r_L \,\wedge\, \lambda \not< r_R$. Let $B \subseteq M$ be the set of atoms of $r_R$, i.e. the atoms $\rho$ such that $\rho < r_R$. The full crossing of $(r_L,r_R)$ in model $M$ is the model $K = (M - H) \cup (H\bigtriangledown B)$ where $H\bigtriangledown B \equiv \{\lambda \bigtriangledown \rho : (\lambda \in H) \wedge (\rho \in B) \}$ is the set of all pairwise unions of an atom of $H$ and an atom of $B$.   \\
Using the properties of theorem \ref{unionProperties} (property iii) follows that the atoms $\lambda \bigtriangledown \rho \in H\bigtriangledown B \subseteq K$ introduced by the full crossing operation satisfy $\lambda \bigtriangledown \rho < r_R$ because $\rho < r_R$. Since the atoms in the discriminant $\lambda \in H = {\bf{dis}}_M(r)$ are no longer present in $M - H$ and the atoms introduced in $H\bigtriangledown B$ are all in the lower segment of $r_R$ then $K \models r^{+}$.  \\
If a model $N$ is atomized by a subset of the atoms of $M$ then there is no duple $r$ positive in $M$ and negative in $N$. This should be clear, from theorem \ref{unionOfModels} since $Th^{-}_0(M)  = Th^{-}_0(M + N) = Th^{-}_0(M) \cup Th^{-}_0(N)$ from which $Th^{-}_0(N) \subseteq Th^{-}_0(M)$. In other words, the elimination of atoms from a model cannot cause any positive duple to become negative. Hence, the atoms in $H$ eliminated by the full crossing operation cannot switch positive duples into negative. We have to prove that the atoms introduced by the full crossing in $H\bigtriangledown B$ do not switch positive duples into negative duples either.  Assume $M \models s^{+}$ for some duple $s=(s_L,s_R)$.  Suppose that term $s_L$ acquires one of the new atoms $\lambda \bigtriangledown \rho$ in its lower segment. From \ref{unionProperties} (property iv) follows that either $\lambda < s_L$ or  $\rho< s_L$. Because $M \models (s_L \leq s_R)$ and $\lambda$, $\rho$ are atoms of $M$ then $(\lambda < s_R) \vee (\rho < s_R)$. Using now \ref{unionProperties} (property iii) we get $\lambda \bigtriangledown \rho < s_R$. Therefore, the atoms of the form $\lambda \bigtriangledown \rho$ cannot switch a positive duple $s^{+}$ into $s^{-}$.   \\
So far we know that $K \models r^{+}$ and $(M \models s^{+}) \Rightarrow (K \models s^{+})$, so the model resulting from full crossing satisfies $K \models Th_0^{+}(M) \cup r^{+}$. To prove that $K$ is the freest model of $Th_0^{+}(M) \cup r^{+}$ we have to show that full crossing does not switch negative duples into positive either unless they are logical consequences of $Th_0^{+}(M) \cup r^{+}$.  \\
Consider a duple $s=(s_L,s_R)$ such that $K \models s^{+}$ and $M \models s^{-}$. The crossing of $r$ has switched $s$ from negative to positive. For this to occur a necessary condition is that the discriminant of $s$ should disappear from $K$ as a result of the crossing, in other words, ${\bf{dis}}_M(s) \subseteq H = {\bf{dis}}_M(r)$, so the atoms of ${\bf{dis}}_M(s)$ should all be atoms of $r_L$. This implies  $s_L \leq s_R \odot r_L$ holds in $M$.   \\
Since $M \models s^{-}$ there is at least one $\lambda^{*} \in {\bf{dis}}_M(s)$. From \ref{unionProperties}, property iii, all the atoms of the form $\{  \lambda^{*} \bigtriangledown \rho :  \rho \in B \}$ are in $s_L$. If $K \models s^{+}$ then all the atoms $\lambda^{*} \bigtriangledown \rho$ should also be atoms of $s_R$ in model $K$. Using \ref{unionProperties} (property v) $\lambda^{*} \not< s_R$ and $\lambda^{*} \bigtriangledown \rho < s_R$ implies $\rho < s_R$ for each $\rho \in B$. Since $B$ is the lower atomic segment of $r_R$ in $M$ immediately follows that $M \models (r_R \leq s_R)$.  Putting both conditions together:
\[
M \models  ((s_L \leq s_R \odot r_L) \,\wedge\, (r_R \leq s_R)).
\]
Since
\[
(s_L \leq s_R \odot r_L) \,\wedge\, (r_R \leq s_R)  \,\wedge\, (r_L \leq  r_R) \Rightarrow s_L \leq  s_R
\]
$s^{+}$ is a logical consequence of the theory $Th_0^{+}(M) \cup r^{+}$ so any model of $Th_0^{+}(M) \cup r^{+}$ must satisfy $s^{+}$.  We have proved that any duple $s^{+}$ satisfied by $K$ is satisfied by any model of $Th_0^{+}(M) \cup r^{+}$ and, because $K \models Th_0^{+}(M) \cup r^{+}$ then $K = F_C(Th_0^{+}(M) \cup r^{+})$.     \\
Finally, we should make sure that $K$ satisfies the sixth axiom of atomized semilattices. This reduces to check that by removing the atoms in $H$ from $M$ we are not leaving some constant $c$ with an empty lower atomic segment. Because for each $\lambda < c$ we have at least one $\lambda \bigtriangledown \rho < c$ in $K$, it follows that $K$ satisfies the sixth axiom if $M$ does.
\end{proof}
\bigskip

We use $\square_{a \leq b} M$ or $\square_{r} M$ for a duple $r=(a,b)$ to represent the model resulting from the full crossing of $r$ in M.

\bigskip

\begin{example} \label{simplecrossingExample} Suppose the model over the constants $C = \{ a,b,c,d,e \}$:  \[
M = [\phi_{a},\, \phi_{a,b}, \phi_{c,d,e},\, \phi_{b,e},\, \phi_{c}, \, \phi_{d} ].
\]
where we are using $\phi_A$ for an atom with upper constant segment equal to the set $A$. \\
Note that $M \models (b \not\leq a \odot d)$ as the duple $(
b,\,  a \odot d)$ is discriminated by $\,{\bf{dis}}_M(b, \,a \odot d) =  \{ \phi_{b,e} \}$. The full crossing of $(
b, \, a \odot d)$ on $M$ is equal to: \[
\square_{b \leq a \odot d} M = [ (M - \phi_{b,e}) \cup \phi_{b,e} \bigtriangledown \{\phi_{a}, \phi_{a,b}, \phi_{c,d,e}, \, \phi_{d}\} ] = 
\] \[
= [\{ \phi_{a},\, \phi_{a,b}, \phi_{c,d,e},\, \phi_{c}, \, \phi_{d} \} \cup \{ \phi_{a,b,e}, \phi_{a,b,e},\, \phi_{b,c,d,e}, \, \phi_{b,d,e}  \} ] = 
\]\[
= [ \phi_{a},\, \phi_{a,b}, \phi_{c,d,e},\, \phi_{c}, \, \phi_{d}, \, \phi_{a,b,e}, \phi_{b,c,d,e}, \, \phi_{b,d,e} ]. 
\]
Since $\phi_{b,c,d,e} = \phi_{c,d,e} \bigtriangledown \phi_{b,d,e}$ then $\phi_{b,c,d,e}$ is redundant with $\square_{b \leq a \odot d} M$, and then:\[
\square_{b \leq a \odot d} M = [ \phi_{a},\, \phi_{a,b}, \phi_{c,d,e},\, \phi_{c}, \, \phi_{d}, \, \phi_{a,b,e}, \, \phi_{b,d,e} ],  
\]
that is the freest model that satisfies $\,\square_{b \leq a \odot d} M \models (b \leq a \odot d) \cup Th_0^{+}(M)$.
\end{example}

\bigskip
\bigskip

\begin{theorem} \label{fullCrossingIsCommutative}
The full crossing operation is commutative up to redundant atoms. The full-crossing of every duple in an ordered set of duples $R$ produces the same model for every order chosen. 
\end{theorem}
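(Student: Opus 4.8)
The plan is to leverage Theorem \ref{fullCrossingIsFreestTheorem}, which identifies a single full crossing $\square_{r}M$ with the freest model $F_C(Th_0^{+}(M)\cup r^{+})$, together with the observation that ``produces the same model'' is to be read up to redundant atoms, i.e.\ as ``spawns the same semilattice of regular elements''. By Theorem \ref{uniqueAtomization}(ii) this is in turn equivalent to ``has the same non-redundant atoms'', and as recalled in Section \ref{Description} the semilattice of regular elements of a model is nothing but the quotient of $F_C(\emptyset)$ by the preorder recorded in $Th_0^{+}$; hence it suffices to show that, for a finite sequence of duples $r_1,\dots,r_n$ over $C$, the positive theory $Th_0^{+}\!\left(\square_{r_n}\cdots\square_{r_1}M\right)$ depends only on $Th_0^{+}(M)$ and on the \emph{set} $\{r_1,\dots,r_n\}$, not on the order in which the crossings are performed.

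I would establish this by induction on $n$, showing that $Th_0^{+}\!\left(\square_{r_n}\cdots\square_{r_1}M\right)$ equals the set of positive duples that are logical consequences of $Th_0^{+}(M)\cup\{r_1^{+},\dots,r_n^{+}\}$. The case $n=0$ holds because $Th_0^{+}(M)$, being the positive theory of a model, is already closed under logical consequence. For the inductive step, write $M'=\square_{r_{n-1}}\cdots\square_{r_1}M$ and let $T$ be its positive theory, which by hypothesis is the closure of $Th_0^{+}(M)\cup\{r_1^{+},\dots,r_{n-1}^{+}\}$. If $M'\models r_n^{-}$, Theorem \ref{fullCrossingIsFreestTheorem} gives $\square_{r_n}M'=F_C(Th_0^{+}(M')\cup r_n^{+})=F_C(T\cup r_n^{+})$, whose positive theory is, by the meaning of ``freest model'', the closure of $T\cup r_n^{+}$, i.e.\ the closure of $Th_0^{+}(M)\cup\{r_1^{+},\dots,r_n^{+}\}$. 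If instead $M'\models r_n^{+}$, then ${\bf{dis}}_{M'}(r_n)=\emptyset$, so by Definition \ref{fullCrossing} the crossing is the identity on the atom set, $\square_{r_n}M'=M'$; but also $r_n^{+}\in T$, so the closure of $T\cup r_n^{+}$ is just $T=Th_0^{+}(M')$. In either case the positive theory of the result is the closure of $Th_0^{+}(M)\cup\{r_1^{+},\dots,r_n^{+}\}$, which manifestly does not see the ordering. Taking $n=2$ yields commutativity up to redundant atoms, and general $n$ yields order-independence of crossing all the duples of an ordered set $R$.

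The step I expect to be the main obstacle is the careful handling of the precondition ``$M\models r^{-}$'' built into Theorem \ref{fullCrossingIsFreestTheorem}: one must isolate the degenerate case in which a later duple of the sequence has already been forced by the earlier crossings, check that the crossing formula of Definition \ref{fullCrossing} then literally returns the same atom set (the discriminant being empty, so $H=\emptyset$ and $H\bigtriangledown B=\emptyset$), and verify that adjoining an already-valid positive duple to a generating theory does not enlarge its logical closure — so that the ``freest model'' bookkeeping still lines up across the two cases. A smaller but necessary point to spell out is the passage from ``same positive theory'' to ``same model up to redundant atoms'', which rests on the recovery of the order relation (hence of the whole regular-element semilattice) from $Th_0^{+}$ used throughout Section \ref{Description}, together with the uniqueness of the redundancy-free atomization given by Theorem \ref{uniqueAtomization}(ii).
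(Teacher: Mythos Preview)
Your proposal is correct and follows essentially the same route as the paper: reduce order-independence to the fact that iterated full crossing of the duples in $R$ yields the freest model $F_C(Th_0^{+}(M)\cup R^{+})$, and then conclude that the resulting atomizations can differ only in redundant atoms. The paper's proof is much terser---it simply asserts the iterated version of Theorem \ref{fullCrossingIsFreestTheorem} without spelling out the induction or the degenerate case $M'\models r_n^{+}$, and it cites Theorem \ref{redundantAtom} rather than Theorem \ref{uniqueAtomization}(ii) for the final step---but your more careful treatment of these points is a genuine improvement rather than a deviation.
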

\begin{proof}
Theorem \ref{fullCrossingIsFreestTheorem} shows that the result of full crossing the duples of $R^{+}$ in a model $M$ is always the freest model $F_C(Th_0^{+}(M) \cup R^{+})$ and, hence, is independent of the order of crossing. Because the result is the same model, from theorem \ref{redundantAtom}, the resulting atomizations for different orders of crossing can only differ in redundant atoms. 
\end{proof}
\bigskip
\bigskip

\begin{theorem} \label{freestModelTheorem}
The freest model $F_C(\emptyset)$ over a set $C$ of constants has $\vert C \vert$ non-redundant atoms, each with a single constant in its upper segment. 
\end{theorem}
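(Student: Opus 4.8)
The plan is to exhibit a concrete atomization of $F_C(\emptyset)$ consisting of exactly the $\vert C\vert$ ``singleton'' atoms, show that none of them is redundant, and then let theorem \ref{uniqueAtomization} do the rest.

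For each constant $c\in C$ let $\phi_c$ be the atom with ${\bf{U}}^c(\phi_c)=\{c\}$ (definition \ref{atomInAsAsetDefinition}), and put $A=\{\phi_c:c\in C\}$. I would first check that $[A]=F_C(\emptyset)$. Since ${\bf{U}}^c(\phi_c)=\{c\}$, theorem \ref{atomicSegmentFromTermTheorem}(iv) gives, for any term $t$, that $\phi_c<t$ iff $c\in{\bf{C}}(t)$; substituting this into the AS3 description of $\leq_{[A]}$ (namely $t\leq_{[A]}s$ iff $\neg\exists\phi\in A:(\phi<t)\wedge(\phi\not<s)$) yields $t\leq_{[A]}s$ iff ${\bf{C}}(t)\subseteq{\bf{C}}(s)$, which is precisely the order of the freest semilattice recalled in the proof of theorem \ref{freestModelCanBeAtomizedByAllTheAtoms}. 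Every regular element is represented by a term with at least one component constant, so AS6 holds, and the remaining axioms hold by theorem \ref{atomizationsAreModels}; hence $[A]=F_C(\emptyset)$.

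Next I would verify that no $\phi_c$ is redundant with $F_C(\emptyset)$. The only constant $d$ with $\phi_c<d$ is $d=c$, so by definition \ref{redundantAtomDefi} redundancy of $\phi_c$ would require an atom $\eta<c$ of $F_C(\emptyset)$ that $\phi_c$ is wider than, i.e. with ${\bf{U}}^c(\eta)\subsetneq{\bf{U}}^c(\phi_c)=\{c\}$; but then ${\bf{U}}^c(\eta)=\emptyset$, contradicting AS1. So every atom of $A$ is non-redundant in $F_C(\emptyset)$.

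Finally, I would apply theorem \ref{uniqueAtomization}: $A$ is an atomization of $F_C(\emptyset)$ all of whose atoms are non-redundant, hence it is the unique redundancy-free atomization, and the non-redundant atoms of $F_C(\emptyset)$ are exactly the atoms lying in every atomization, hence exactly $\{\phi_c:c\in C\}$. The assignment $c\mapsto\phi_c$ is injective because distinct singleton sets determine distinct atoms (definition \ref{atomInAsAsetDefinition}, AS5), so $F_C(\emptyset)$ has $\vert C\vert$ non-redundant atoms, each with a single constant in its upper segment. The only place requiring real attention is the first step, $[A]=F_C(\emptyset)$; once the induced order is identified with inclusion of component-constant sets there is no genuine obstacle, the rest being bookkeeping with results already proved.
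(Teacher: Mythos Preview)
Your proof is correct and follows essentially the same route as the paper: define the singleton atoms $\phi_c$, verify that they atomize $F_C(\emptyset)$ by showing the induced order coincides with inclusion of component constants, and check non-redundancy via the impossibility of a strictly narrower atom below a singleton. The only notable difference is that you explicitly invoke theorem \ref{uniqueAtomization} to conclude that the singleton atoms are \emph{all} the non-redundant atoms of $F_C(\emptyset)$, whereas the paper leaves this implicit; your version is therefore slightly more complete on that point.
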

\begin{proof}
The freest model is the term algebra, i.e. the model spawned by the terms modulus the rules of the algebra, in this case the commutative, associative and idempotent laws. For terms $s$ and $t$, the term algebra satisfies that $F_C(\emptyset) \models s \leq t$ if and only if the component constants satisfy ${\bf{C}}(s) \subseteq {\bf{C}}(t)$. Let $M$ be the atomized model obtained by $\vert C \vert$ different atoms, each with one constant in its upper segment. From the axiom of atomized models $\,\phi < a \odot b \, \Leftrightarrow  (\phi < a) \vee (\phi < b)\,$ applied to the component constants of $s$ and $t$ follows that for $M$ (and for any semilattice), ${\bf{C}}(s) \subseteq {\bf{C}}(t)$ implies $s \leq t$. Conversely, assume $M \models s \leq t$. Each atom $\phi$ in the lower atomic segment of $s$ should be in the lower segment of some component constant $c \in {\bf{C}}(s)$ (this is proposition (ii) of theorem \ref{atomicSegmentFromTermTheorem}) and, since the atoms of $M$ have only one constant in its upper segment then $\phi < t$ can occur only if $c$ is also a component constant of $t$. Since each constant of $C$ has its own atom, every component constant of $s$ should be a component constant of $t$ otherwise there is an atom that discriminates $(s, t)$ against our assumption. Since each atom $\phi$ of $M$ has an upper segment ${{\bf{U}}^{c}}(\phi)$ with a single constant, $\phi$ is non-redundant. It follows that $M$ is an atomized semilattice model without redundant atoms that satisfies the same positive an negative duples that $F_C(\emptyset)$ and, hence, $M$ is equal to $F_C(\emptyset)$.
\end{proof}
\bigskip

We can easily show that the model $M$ of theorem\ref{freestModelTheorem}, i.e. the model spawned by $\vert C \vert$ different atoms each with one constant in its upper segment, is freer than any atomized model. Let $N$ be any atomized model and let $r$ be any duple such that $N \models r^{-}$. Each atom $\eta$ of $N$ is the union of the atoms of $M$ corresponding to each constant in the set ${{\bf{U}}^{c}}(\eta)$ and this proves that $\eta$ is redundant with $M$. Since all the atoms of $N$ are redundant with $M$, using theorem $\ref{unionWithFreer}$ proposition (iii) we conclude that $M$ is freer or as free as $N$ and, therefore, $M \models r^{-}$ and $M$ is the freest model. From theorem \ref{redundantAtom} any atomization of $F_C(\emptyset)$ contains the $\vert C \vert$ atoms of $M$ and only can differ from $M$ in a set of redundant atoms. 

\bigskip
\begin{theorem} \label{atomizationExistsTheorem}
Any model $M$ with a finite set $C$ of constants can be atomized. 
\end{theorem}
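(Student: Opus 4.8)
The plan is to build an atomization explicitly. Write $\nu_M$ for the natural homomorphism $F_C(\emptyset)\to M$ and, for each regular element $a$ of $M$, let $D(a)=\{c\in C:\nu_M(c)\leq a\}$ be the set of constants below $a$. Since $M$ is generated by its constants, $D(a)$ is non-empty and $a=\bigodot_{c\in D(a)}\nu_M(c)$, the map $a\mapsto D(a)$ is injective, and $a\leq b$ in $M$ iff $D(a)\subseteq D(b)$. For every $a$ with $D(a)\neq C$ I would introduce, using Definition~\ref{atomInAsAsetDefinition}, the atom $\phi_a$ with ${\bf{U}}^c(\phi_a)=C\setminus D(a)$, and set $A=\{\phi_a:D(a)\subsetneq C\}\cup\{\ominus_c\}$. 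By Theorem~\ref{atomizationsAreModels} the set $A$ (which already contains $\ominus_c$) spawns an atomized semilattice model $[A]$ over $C$; the task is to prove $[A]=M$.

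First I would compute lower atomic segments with Theorem~\ref{atomicSegmentFromTermTheorem}(iv). For a term $t$ with component constants ${\bf{C}}(t)=S$, writing $x=\nu_M(t)=\bigodot_{c\in S}\nu_M(c)$ and using that $\bigodot_{c\in S}\nu_M(c)\leq a$ holds iff $c\leq a$ for every $c\in S$, one obtains
\[
{\bf{L}}^a_{[A]}(t)=\{\ominus_c\}\cup\{\phi_a:S\cap(C\setminus D(a))\neq\emptyset\}=\{\ominus_c\}\cup\{\phi_a:x\not\leq a\text{ in }M\},
\]
which depends on $t$ only through $x=\nu_M(t)$. Hence, by Theorem~\ref{atomicSegmentFromTermTheorem}(vi), in $[A]$ every term is equal to the canonical term $t_x=\bigodot_{c\in D(x)}\nu_M(c)$ representing its $M$-value, so $[A]$ has at most $|M|$ regular elements.

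Next I would match the orders. By Theorem~\ref{atomicSegmentFromTermTheorem}(vi), $[A]\models(t_a\leq t_b)$ iff ${\bf{L}}^a_{[A]}(t_a)\subseteq{\bf{L}}^a_{[A]}(t_b)$, and after cancelling $\ominus_c$ this reads $\{x\in M:a\not\leq x\}\subseteq\{x\in M:b\not\leq x\}$, i.e. the principal filter of $b$ in $M$ is contained in that of $a$, i.e. $a\leq b$ in $M$. Thus distinct elements of $M$ yield distinct canonical terms, so $[A]$ has exactly $|M|$ regular elements and the bijection $a\leftrightarrow[t_a]$ is an order isomorphism; since $\odot$ is the supremum for this order, $[A]=M$ as semilattices over $C$ (indeed the congruences on $F_C(\emptyset)$ coincide). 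Because $\ominus_c<t$ for every term, $\ominus_c$ lies in every lower segment, so AS6 holds and, being below everything, $\ominus_c$ affects neither the inclusions above nor the element count; thus nothing was lost by adjoining it.

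The main obstacle I anticipate is the bookkeeping around the degenerate elements: an element $a$ with $D(a)=C$ (a top element above all constants, if one exists) contributes no atom $\phi_a$, which is exactly why AS6 must be checked separately and why $\ominus_c$ is thrown in; and if $M$ is the one-element semilattice then $A=\{\ominus_c\}$ and one checks directly that $[\{\ominus_c\}]=M$. Everything else is a routine unwinding of Definition~\ref{atomInAsAsetDefinition} and Theorem~\ref{atomicSegmentFromTermTheorem}. A less explicit alternative would be to start from $F_C(\emptyset)$ (atomized by Theorem~\ref{freestModelTheorem}) and full-cross the finitely many positive duples of $Th_0^{+}(M)$; by Theorems~\ref{fullCrossingIsFreestTheorem} and~\ref{fullCrossingIsCommutative} the outcome is $F_C(Th_0^{+}(M))$, which equals $M$ because a semilattice over $C$ is determined by the set of positive duples it satisfies, so $M$ is trivially the freest model of its own positive theory.
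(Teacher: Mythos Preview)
Your proof is correct, and your main argument takes a genuinely different route from the paper. The paper's proof is precisely your ``less explicit alternative'': start from the $|C|$-atom atomization of $F_C(\emptyset)$ given by Theorem~\ref{freestModelTheorem} and full-cross every duple in the finite set $Th_0^{+}(M)$; by Theorem~\ref{fullCrossingIsFreestTheorem} the result is $F_C(Th_0^{+}(M))=M$. Your primary construction instead manufactures the atoms directly, one per non-top element $a$ of $M$, with ${\bf U}^c(\phi_a)=C\setminus D(a)$, and then verifies the order agrees. This is essentially the subdirect-decomposition viewpoint (each $\phi_a$ encodes the two-valued homomorphism ``is the element $\leq a$?'') that the paper develops later in Theorem~\ref{modelAsASubdirectProduct}; your approach buys a self-contained argument that does not rely on the full-crossing machinery, at the cost of producing an atomization with roughly $|M|$ atoms (many redundant) rather than one obtained algorithmically from $|C|$ starting atoms. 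Both approaches are short; the paper's choice emphasizes that full crossing is the central computational tool, while yours makes the link to Stone-type representation explicit earlier.
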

\begin{proof}
Since the theory $Th_0^{+}(M)$ for any model $M$ over a finite set of constants is a finite a set, $M$ can be atomized by starting with the set of $\vert C\vert$ atoms that provides an atomization for the freest model $F_C(\emptyset)$ (see theorem \ref{freestModelTheorem}), each atom contained in a single constant, and then by performing a finite sequence of full crossing operations for each duple in $Th_0^{+}(M)$. As a result we obtain an atomization of model $M$ which follows from theorem \ref{fullCrossingIsFreestTheorem} and $M = F_C(Th_0^{+}(M))$.   
\end{proof}
\bigskip

The reader may recognize theorem \ref{atomizationExistsTheorem} as a consequence of the Stone's theorem  \cite{Burris}. In theorem \ref{modelAsASubdirectProduct} we will make the connection explicit.

\bigskip
\bigskip
\begin{theorem} \label{fullCrossingAndRedundantAtomsTheorem}
The models resulting from the full crossing of a duple $r$ in different atomizations of a model $M$ can only differ in redundant atoms.
\end{theorem}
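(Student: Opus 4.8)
The plan is to reduce the statement to Theorem~\ref{uniqueAtomization}(i): once we know that full crossing $r$ in two atomizations $A$ and $B$ of $M$ yields the \emph{same} semilattice, we are done, because two atomizations of one model have exactly the same non-redundant atoms and hence can only differ in redundant atoms. So the whole task is to prove $\square_{r}[A] = \square_{r}[B]$ as semilattices, given that $[A] = [B] = M$.

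First I would dispose of the trivial case $M \models r^{+}$. Then $H = {\bf{dis}}_M(r)$ is empty by Theorem~\ref{atomicSegmentFromTermTheorem}(iv), so $H\bigtriangledown B = \emptyset$ as well, and Definition~\ref{fullCrossing} gives $\square_{r}[A] = [A] = M$ and likewise $\square_{r}[B] = M$; the two coincide.

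Next, assume $M \models r^{-}$. Here I would apply Theorem~\ref{fullCrossingIsFreestTheorem} twice, once to the atomization $A$ and once to the atomization $B$, obtaining $\square_{r}[A] = F_C(Th_0^{+}([A]) \cup r^{+})$ and $\square_{r}[B] = F_C(Th_0^{+}([B]) \cup r^{+})$. The point to stress is that $Th_0^{+}(\cdot)$ is a property of the regular elements only, i.e.\ of the underlying semilattice, not of the particular set of atoms chosen to atomize it; since $[A] = [B] = M$ we have $Th_0^{+}([A]) = Th_0^{+}([B]) = Th_0^{+}(M)$. Therefore both full crossings produce one and the same freest model $F_C(Th_0^{+}(M) \cup r^{+})$, so $\square_{r}[A] = \square_{r}[B]$, and Theorem~\ref{uniqueAtomization} (together with Theorem~\ref{redundantAtom}) finishes the proof.

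I expect no serious obstacle: the only thing that needs care is the observation that the full-crossing \emph{construction} genuinely depends on the atomization, since the discriminant $H = {\bf{dis}}_M(r)$ and the base $B = {\bf{L}}^a_M(r_R)$ are computed inside the atomized model and change from atomization to atomization, whereas its \emph{output semilattice} does not. That separation is exactly what Theorem~\ref{fullCrossingIsFreestTheorem} buys us, so the present statement is essentially a corollary of it combined with the uniqueness of the non-redundant atomization.
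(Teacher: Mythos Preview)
Your proposal is correct and follows essentially the same route as the paper: both argue that Theorem~\ref{fullCrossingIsFreestTheorem} applies to any atomization of $M$, so the two full crossings land on the same freest model $F_C(Th_0^{+}(M)\cup r^{+})$, and then invoke the uniqueness of non-redundant atoms. Your version is in fact slightly more careful: you handle the case $M\models r^{+}$ explicitly (which Theorem~\ref{fullCrossingIsFreestTheorem} formally excludes) and cite Theorem~\ref{uniqueAtomization} directly, whereas the paper appeals to Theorem~\ref{redundantAtom}.
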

\begin{proof}
The proof of theorem \ref{fullCrossingIsFreestTheorem} needs no assumption regarding redundant atoms. With or without redundant atoms in the atomization of a model $M$ the result of full crossing a duple $r$ in $M$ is the freest model $F_C(Th_0^{+}(M) \cup r^{+})$. Theorem \ref{redundantAtom} assures us that if the resulting models are the same then the atomizations can only differ in redundant atoms. 
\end{proof}
\bigskip
\bigskip

\begin{theorem} \label{crossingProduceLessFreeModels}
Let $M$ be an atomized semilattice. If $M \models r^{-}$ then $M$ is freer than $\square_r M$. If $M \models r^{+}$ then $M = \square_r M$.
\end{theorem}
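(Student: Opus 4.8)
The statement has two parts. The plan is to handle the second part first, since it is essentially immediate, and then derive the first part from the machinery already established, principally Theorem \ref{fullCrossingIsFreestTheorem} and Theorem \ref{atomsCanBeRemovedTheorem}.

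For the second claim, suppose $M \models r^{+}$. Then $Th_0^{+}(M) \cup r^{+}$ has the same logical consequences as $Th_0^{+}(M)$, so $F_C(Th_0^{+}(M) \cup r^{+}) = F_C(Th_0^{+}(M))$. But $M$ is itself a model of $Th_0^{+}(M)$, and in fact (since $M$ is atomized and every positive duple not satisfied by $M$ is discriminated by one of its atoms) $M$ already is the freest such model, i.e. $M = F_C(Th_0^{+}(M))$. Combining, $\square_r M = F_C(Th_0^{+}(M) \cup r^{+}) = M$. One small point to check: when $M \models r^{+}$ the discriminant $H = {\bf{dis}}_M(r)$ is empty, so the full crossing literally does nothing to the atom set, making the identity $\square_r M = M$ hold on the nose (not merely up to redundant atoms); I would remark on this.

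For the first claim, assume $M \models r^{-}$. By Theorem \ref{fullCrossingIsFreestTheorem}, $\square_r M = F_C(Th_0^{+}(M) \cup r^{+})$, so in particular $\square_r M \models Th_0^{+}(M)$, hence $Th_0^{+}(M) \subseteq Th_0^{+}(\square_r M)$; dually $Th_0^{-}(\square_r M) \subseteq Th_0^{-}(M)$, which already gives that $M$ is as free as $\square_r M$ (every negative duple of $\square_r M$ is negative in $M$). It remains to upgrade "as free" to "freer", i.e. to exhibit a duple that is negative in $M$ but positive in $\square_r M$. The natural candidate is $r$ itself: by hypothesis $M \models r^{-}$, and by the first part of the proof of Theorem \ref{fullCrossingIsFreestTheorem} we have $\square_r M \models r^{+}$. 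Hence $r$ witnesses that $\square_r M$ is strictly less free than $M$, which is exactly the assertion that $M$ is freer than $\square_r M$.

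The only real obstacle is making sure the word "freer" is being used in the strict sense consistent with Definition \ref{freerModelDefinition} and the surrounding text: Definition \ref{freerModelDefinition} as stated defines only "freer or as free", so I would read "freer than" as "freer or as free, and not equal in theory", and the duple $r$ supplies the required strictness. A secondary check is that $\square_r M$ is genuinely a legitimate atomized semilattice model so that talking about its theory is meaningful — but this is already part of the conclusion of Theorem \ref{fullCrossingIsFreestTheorem} (including the verification of AS6 at the end of that proof), so nothing new is needed. I expect the whole argument to be short, essentially a two-line deduction from Theorem \ref{fullCrossingIsFreestTheorem} plus the observation $M = F_C(Th_0^{+}(M))$.
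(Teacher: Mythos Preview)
Your proposal is correct and follows essentially the same route as the paper: both invoke Theorem \ref{fullCrossingIsFreestTheorem} to get $Th_0^{+}(M) \subseteq Th_0^{+}(\square_r M)$ (hence $Th_0^{-}(\square_r M) \subseteq Th_0^{-}(M)$), and both handle the case $M \models r^{+}$ via the empty discriminant together with $F_C(Th_0^{+}(M) \cup \{r^{+}\}) = F_C(Th_0^{+}(M)) = M$. Your explicit use of $r$ as a witness for strictness is a nice touch that the paper leaves implicit in its use of $\subset$; the mention of Theorem \ref{atomsCanBeRemovedTheorem} in your plan turns out to be unnecessary, as you yourself end up not invoking it.
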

\begin{proof}
Theorem \ref{fullCrossingIsFreestTheorem} states that $\square_r M$ is the freest model $F_C(Th_0^{+}(M) \cup \{ r^{+} \})$. Since  $\square_r M \models Th_0^{+}(M) \cup \{ r^{+} \}$ then $Th_0^{+}(M) \subset Th_0^{+}(\square_r M)$. Since $\square_r M$ and $M$ are generated by the same constants, $Th_0^{+}(M) \subset Th_0^{+}(\square_r M)$ is equivalent to $Th_0^{-}(\square_r M) \subset Th_0^{-}(M)$, and it follows that $M$ is freer than $\square_r M$. \\
If $M \models r^{+}$ the discriminant of $r$ in $M$ is empty and full crossing leaves $M$ unaltered. Alternatively, $\square_r M = F_C(Th_0^{+}(M) \cup \{ r^{+} \}) = F_C(Th_0^{+}(M)) = M$.
\end{proof}
\bigskip

\newpage

\section{Join models, restrictions, subalgebras and products } \label{unionJoinRestrictionSubalgebraAndProducts}

We use $M^{|Q}$ or $[|Q]M$ to refer to the ``restriction of $M$ to $Q$'', a model defined as the subalgebra of $M$ generated by $Q$. Alternatively we can define $M^{|Q}$ as the smallest subalgebra of $M$ that contains the constants $Q$. 

\bigskip
\begin{definition} \label{restrictionDefinition}  
Let $Q$ be a subset of the constants $Q \subset C$. The ``restriction of $\phi$ to $Q$'', is the atom $\phi^{|Q}$ with upper segment: $U^{c}(\phi^{|Q}) = U^{c}(\phi)\cap Q$. The restriction of $\phi$ to $Q$ does not exist if $U^{c}(\phi)\cap Q = \emptyset$.
\end{definition} 
\bigskip

\begin{theorem}  \label{restrictionLemma} 
Let $M$ be an atomized semilattice model over a set of constants $C$ and  $Q$ be a subset of $C$. Let $M^{|Q}$ be the subalgebra of $M$ generated by $Q$. Then $M^{|Q}$ is the model spawned by the restriction to $Q$ of the atoms of $M$. The restriction of the atoms of a model $M$ to a subset $Q$ of the constants produces the same model $M^{|Q}$ for every atomization of $M$. 
\end{theorem}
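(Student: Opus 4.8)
The plan is to show two things: first, that the model spawned by the restricted atoms equals the subalgebra $M^{|Q}$; and second, that this model does not depend on which atomization of $M$ we started with. I will work throughout with the characterization of the order relation via lower atomic segments (Theorem \ref{atomicSegmentFromTermTheorem}), since that is the tool that translates statements about atoms into statements about the semilattice order.

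\medskip

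\textbf{Step 1: Setup.} Let $A$ be a set of atoms atomizing $M$, and let $A^{|Q} = \{\phi^{|Q} : \phi \in A,\ {\bf{U}}^c(\phi)\cap Q \neq \emptyset\}$ be the set of restrictions (discarding those whose restriction does not exist, per Definition \ref{restrictionDefinition}). By Theorem \ref{atomizationsAreModels}, $A^{|Q}$ (or $A^{|Q}\cup\{\ominus_c\}$ over $Q$) spawns an atomized semilattice $N := [A^{|Q}]$ over the constant set $Q$; I want to identify $N$ with $M^{|Q}$.

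\medskip

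\textbf{Step 2: $N$ is a subalgebra of $M$ generated by $Q$.} The regular elements of $N$ are the idempotent summations of the constants in $Q$, so $N$ is generated by $Q$ by construction. The key point is that $N$ realizes the \emph{same} order relation on $Q$-terms as $M$ does: for terms $t,s \in F_Q(\emptyset) \subseteq F_C(\emptyset)$, I claim $N \models (t \leq s) \Leftrightarrow M \models (t \leq s)$. Using Theorem \ref{atomicSegmentFromTermTheorem}(vi), $M \models (t \leq s)$ iff every atom $\phi \in A$ with ${\bf{C}}(t)\cap{\bf{U}}^c(\phi)\neq\emptyset$ also satisfies ${\bf{C}}(s)\cap{\bf{U}}^c(\phi)\neq\emptyset$. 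Since ${\bf{C}}(t),{\bf{C}}(s)\subseteq Q$, having ${\bf{C}}(t)\cap{\bf{U}}^c(\phi)\neq\emptyset$ is the same as ${\bf{C}}(t)\cap({\bf{U}}^c(\phi)\cap Q)\neq\emptyset$, i.e. ${\bf{C}}(t)\cap{\bf{U}}^c(\phi^{|Q})\neq\emptyset$; and atoms of $A$ whose restriction to $Q$ does not exist simply never appear in the lower segment of any $Q$-term, so they are irrelevant. Hence the $M$-condition on $Q$-terms matches exactly the $N$-condition (Theorem \ref{atomicSegmentFromTermTheorem}(vi) applied in $N$). This shows the natural map $F_Q(\emptyset) \to M$ and $F_Q(\emptyset) \to N$ have the same kernel, so $N$ embeds into $M$ as the subalgebra generated by $Q$, which is precisely $M^{|Q}$ (the smallest subalgebra containing $Q$ — any subalgebra containing $Q$ contains all idempotent summations of elements of $Q$).

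\medskip

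\textbf{Step 3: Independence of the atomization.} Suppose $A$ and $A'$ are two atomizations of $M$. By Theorem \ref{uniqueAtomization}, they have the same non-redundant atoms and differ only in redundant atoms. It suffices to show that if $\phi$ is redundant with $M$, then $\phi^{|Q}$ (when it exists) is redundant with $N$ or is one of the restricted atoms already present — equivalently, that adding or removing $\phi$ from $A$ before restricting does not change $[A^{|Q}]$. By Theorem \ref{redundantIsUnionOfAtomsTheorem}, a redundant $\phi$ is a union $\phi = \bigtriangledown_i \eta_i$ of (non-redundant) atoms of $M$; restriction commutes with $\bigtriangledown$ because ${\bf{U}}^c(\phi^{|Q}) = {\bf{U}}^c(\phi)\cap Q = \big(\bigcup_i {\bf{U}}^c(\eta_i)\big)\cap Q = \bigcup_i {\bf{U}}^c(\eta_i^{|Q})$, so $\phi^{|Q} = \bigtriangledown_i \eta_i^{|Q}$ (dropping any $\eta_i$ whose restriction is empty, which only shrinks the union but still covers ${\bf{U}}^c(\phi^{|Q})$). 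Thus $\phi^{|Q}$ is either redundant with $N$ or equal to some $\eta_i^{|Q}$, and by Theorem \ref{redundantAtom} it can be dropped without changing the model. Since by Step 2 the model spawned by the restrictions of the non-redundant atoms of $M$ is $M^{|Q}$ regardless, all atomizations give the same $M^{|Q}$.

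\medskip

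\textbf{The main obstacle} I anticipate is the bookkeeping around atoms whose restriction to $Q$ is empty and around the axiom AS6: one must check that discarding those atoms genuinely corresponds to the subalgebra (they contribute nothing to the order on $Q$-terms, so this is fine) and that the resulting set of restricted atoms, possibly augmented by $\ominus_c$ over $Q$, still satisfies AS6 — which it does because every constant $q \in Q$ had some atom below it in $M$, and that atom's restriction still lies below $q$. The conceptual content is entirely in the observation that intersecting upper segments with $Q$ is compatible both with the lower-segment computation of the order (Step 2) and with unions of atoms (Step 3); everything else is routine.
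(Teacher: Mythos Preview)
Your Step~2 is essentially the paper's proof: the key observation that for a term $a$ over $Q$ one has $(\phi^{|Q} < a) \Leftrightarrow (\phi < a)$, hence a duple over $Q$ is discriminated by some restricted atom iff it is discriminated by some atom of $M$, so the restricted atomization spawns exactly $M^{|Q}$.

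The difference is in your Step~3. It is correct but unnecessary: the paper notes that the argument of Step~2 already gives independence of the atomization, because the equivalence $N \models (t \leq s) \Leftrightarrow M \models (t \leq s)$ has a right-hand side that mentions only the model $M$, not the particular set $A$ of atoms. Any atomization of $M$ yields the same set of positive and negative duples over $Q$, hence the same $M^{|Q}$. Your detour through Theorems~\ref{uniqueAtomization} and~\ref{redundantIsUnionOfAtomsTheorem} (restriction commutes with $\bigtriangledown$, redundant atoms restrict to redundant-or-existing atoms) is a valid alternative route and foreshadows the proof of Theorem~\ref{restrictionAndRedundacyLemma}, but it is heavier machinery than the statement requires.
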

\begin{proof}
Let $r \equiv (a, b)$ be a duple and $a$ and $b$ terms over $Q$. Let $\phi$ be an atom of $M$ and $\phi^{|Q}$ its restriction to $Q$ defined by the upper constant segment $U^{c}(\phi^{|Q}) = U^{c}(\phi)\cap Q$. Let ${\bf{C}}(a)$ be the component constants of $a$. By definition, $(\phi^{|Q} < a)$ holds when $(U^{c}(\phi)\cap Q) \cap {\bf{C}}(a) \not= \emptyset$ and, since $a$ is a term over $Q$, then $U^{c}(\phi) \cap {\bf{C}}(a) \not= \emptyset$. It follows that $(\phi^{|Q} < a)$ if and only if $(\phi < a)$. Hence, if $r$ is discriminated by an atom of $M$ then $r$ is discriminated by the restriction to $Q$ of the same atom. Additionally, if $r$ is discriminated by no atom in $M$ (i.e. $M \models a \leq b$) then no restriction to $Q$ of any atom of $M$ can discriminate $r$. Therefore, the model spawned by the restriction of the atoms is the subalgebra $M^{|Q}$ regardless of the atomization chosen for $M$.
\end{proof}

\bigskip
\bigskip

We have seen that the restriction of the atoms of $M$ to $Q$ spawns the well-defined semilattice $M^{|Q}$. Theorem \ref{uniqueAtomization} tells us that the non-redundant atoms of a model are unique, so each non-redundant atom of $M^{|Q}$ has to be equal to the restriction to $Q$ of at least one atom of $M$. In addition, we could choose for $M$ an atomization containing only non-redundant atoms in $M$ which implies that each non-redundant atom of $M^{|Q}$ is the restriction of at least one non-redundant atom in $M$. We provide here another more direct proof that the reader may find illuminating:

\bigskip
\bigskip

\begin{theorem}  \label{restrictionAndRedundacyLemma} Let $M$ be an atomized semilattice model over $C$ and let $Q \subset C$. For each non-redundant atom $\alpha$ of $M^{|Q}$ there is at least one non-redundant atom of $M$ that restricted to $Q$ is equal to $\alpha$.
\end{theorem}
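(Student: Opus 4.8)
The plan is to obtain the non-redundant atom $\alpha$ of $M^{|Q}$ as the restriction of a carefully chosen atom of $M$, and then show that atom (or some atom no wider than it) can be taken non-redundant. By Theorem~\ref{restrictionLemma}, $M^{|Q}$ is spawned by the set $\{\phi^{|Q} : \phi \in M,\ U^{c}(\phi)\cap Q \neq \emptyset\}$. Since $\alpha$ is an atom of the model $M^{|Q}$, it is either one of these restrictions or redundant with the restrictions (Theorem~\ref{redundantAtom} applied to the atomization by restrictions); but by Theorem~\ref{redundantIsUnionOfAtomsTheorem}, a non-redundant atom of a model cannot be a union of distinct atoms of that model, so $\alpha$ must actually equal $\phi^{|Q}$ for some atom $\phi$ of $M$. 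This already gives an atom of $M$ restricting to $\alpha$; what remains is to arrange that it can be chosen non-redundant in $M$.

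First I would reduce to an atomization of $M$ consisting only of non-redundant atoms in $M$; this exists and is unique by Theorem~\ref{uniqueAtomization}, and by Theorem~\ref{restrictionLemma} the restriction construction gives the same $M^{|Q}$, so the argument of the previous paragraph still produces some atom $\phi$ in this atomization with $\phi^{|Q} = \alpha$. Since every atom of this atomization is non-redundant in $M$, we are done. The only subtlety is to confirm that Theorem~\ref{redundantIsUnionOfAtomsTheorem}(i) really forbids $\alpha$ from being a proper union of restrictions: if $\alpha = \bigtriangledown_i \phi_i^{|Q}$ with each $\phi_i^{|Q}\neq\alpha$, then $\alpha$ is redundant with $M^{|Q}$ by that theorem, contradicting the hypothesis that $\alpha$ is non-redundant in $M^{|Q}$. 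Hence at least one restriction $\phi_i^{|Q}$ appearing in any such expression must itself equal $\alpha$, i.e.\ some single atom $\phi$ of the chosen atomization restricts to $\alpha$ exactly.

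The main obstacle I anticipate is the bookkeeping around the case $U^{c}(\phi)\cap Q = \emptyset$ (restriction undefined) and the degenerate atom $\ominus_c$: one must check that discarding such atoms before restricting does not lose the one we need, which follows because those atoms discriminate no duple over $Q$ (their lower segments meet no term over $Q$), exactly as in the proof of Theorem~\ref{restrictionLemma}. A second point to handle cleanly is that ``$\alpha$ is an atom of the model $M^{|Q}$'' (in the sense of Definition~\ref{isInForSemilatticesDefinition}) together with non-redundancy forces $\alpha$ to lie in \emph{every} atomization of $M^{|Q}$, in particular in the atomization by restrictions of a non-redundant atomization of $M$; this is precisely the content of Theorem~\ref{uniqueAtomization}(i) and closes the argument.
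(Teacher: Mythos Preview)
Your argument is correct. In fact, the approach you take---fix the unique non-redundant atomization of $M$, restrict it, and use Theorem~\ref{uniqueAtomization} to force the non-redundant $\alpha$ to appear in that restricted atomization---is exactly the argument the paper itself sketches in the paragraph immediately \emph{preceding} the theorem, before announcing that it will give ``another more direct proof''.

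The paper's formal proof is organized differently: it starts from an \emph{arbitrary} atom $\phi$ of $M$ with $\phi^{|Q}=\alpha$, then, if $\phi$ happens to be redundant in $M$, decomposes it via Theorem~\ref{redundantIsUnionOfAtomsTheorem} as $\phi=\bigtriangledown_i \varphi_i$ with each $\varphi_i$ non-redundant in $M$, observes that restriction distributes over $\bigtriangledown$, and concludes that since $\alpha=\bigtriangledown_k \varphi_k^{|Q}$ is non-redundant in $M^{|Q}$ some $\varphi_k^{|Q}$ must equal $\alpha$. So the paper trades your global invocation of Theorem~\ref{uniqueAtomization} (existence and uniqueness of the non-redundant atomization, plus the fact that non-redundant atoms lie in every atomization) for a local, constructive decomposition step. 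Your route is shorter and leans on stronger prior results; the paper's route is more self-contained and makes the passage from a possibly redundant $\phi$ to a non-redundant witness explicit. Both are sound.
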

\begin{proof}
Theorem \ref{restrictionLemma} says that each $\alpha$ in $M^{|Q}$ is the restriction of some atom of $\phi \in M$. i.e. $\alpha = \phi^{|Q}$. Either $\phi$ is a non-redundant atom of $M$ or $\phi$ is redundant with $M$ and then $\phi = \bigtriangledown^n_i \varphi_i$ where $\{\varphi_1, \varphi_2,...,\varphi_n\}$ are all non-redundant atoms in $M$. From $U^{c}(\phi) = \cup_i U^{c}(\varphi_i)$ we get  $U^{c}(\phi) \cap Q = \cup_i \, (U^{c}(\varphi_i) \cap Q)$ so we can write $\phi^{|Q} = \bigtriangledown^m_k \varphi_k^{|Q}$ with $m \leq n$ where the summation with index $k$ runs along the $\varphi^{|Q}$ for which $U^{c}(\varphi_k) \cap Q \not= \emptyset$, in other words, it runs along the $\varphi^{|Q}$ that exist.  Again, theorem \ref{restrictionLemma} tells us that $\varphi_k^{|Q} \in M^{|Q}$. If $\alpha$ is non-redundant in $M^{|Q}$, since $\alpha = \phi^{|Q} = \bigtriangledown^m_k \varphi_k^{|Q}$, then $\alpha$ can be equal to a union of atoms of $M^{|Q}$ only if at least one of the atoms in the union is equal to $\alpha$ (otherwise $\alpha$ would be redundant with $M^{|Q}$), i.e. $\alpha = \phi^{|Q} = \varphi_k^{|Q}$ for at least one value of $k$. It follows that if $\alpha$ is non-redundant in $M^{|Q}$ then $\alpha$ is equal to the restriction of at least one non-redundant atom in $M$. 
\end{proof}

\bigskip
\bigskip

\begin{theorem}  \label{homomorphismIfZeroAtom} Let $M$ be an atomized semilattice model over $C$ and let $Q \subset C$. There is a homomorphism from $M$ onto $M^{|Q}$ that maps each term over $C$ to its restriction to $Q$ if and only if $\ominus_Q$ is a non-redundant atom of $M^{|Q}$.
\end{theorem}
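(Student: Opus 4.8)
The plan is to characterize when the restriction map $t \mapsto t^{|Q}$ on terms descends to a well-defined homomorphism $M \to M^{|Q}$. By Theorem \ref{restrictionLemma}, $M^{|Q}$ is the model spawned by the restrictions $\phi^{|Q}$ of the atoms $\phi$ of $M$ (those for which $U^c(\phi)\cap Q\neq\emptyset$), so the natural candidate map sends the class of a term $t$ over $C$ to the class of the same term read over $Q$. The map is automatically compatible with $\odot$ (since ${\bf C}(t\odot s)={\bf C}(t)\cup{\bf C}(s)$ and restriction commutes with union), and it is surjective because $Q\subseteq C$ generates $M^{|Q}$. Hence the only issue is \emph{well-definedness}: whether $M\models(t=s)$ forces $M^{|Q}\models(t^{|Q}=s^{|Q})$, equivalently, whether every negative duple of $M^{|Q}$ over $Q$ is already negative in $M$. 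So the whole statement reduces to: the restriction map is a homomorphism iff $Th_0^{-}(M^{|Q})\cap\{\text{duples over }Q\}\subseteq Th_0^{-}(M)$, i.e. iff $M^{|Q}$, viewed via the duples over $Q$, is freer-or-as-free than $M$ restricted to the language over $Q$.

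Next I would translate this freeness condition into the atom language. A duple $(a,b)$ over $Q$ is discriminated in $M$ exactly by the atoms $\phi\in M$ with $\phi<a$, $\phi\not<b$; as shown in the proof of Theorem \ref{restrictionLemma}, for terms over $Q$ one has $\phi<a \Leftrightarrow \phi^{|Q}<a$, so the atoms discriminating $(a,b)$ in $M$ and the atoms discriminating $(a,b)$ in $M^{|Q}$ correspond under restriction — \emph{except} that distinct atoms of $M$ may restrict to the same atom of $M^{|Q}$, and, crucially, an atom $\phi$ of $M$ with $U^c(\phi)\cap Q=\emptyset$ has no restriction at all and therefore discriminates \emph{no} duple over $Q$ in $M$, yet contributes nothing on the $M^{|Q}$ side either. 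The subtle point is the reverse inclusion: a negative duple $r$ over $Q$ of $M^{|Q}$ is discriminated by some non-redundant atom $\alpha$ of $M^{|Q}$ (we may pick the witness among non-redundant atoms), and by Theorem \ref{restrictionAndRedundacyLemma} $\alpha=\phi^{|Q}$ for some non-redundant atom $\phi$ of $M$; that $\phi$ then discriminates $r$ in $M$, so $M\models r^{-}$. This shows the freeness inclusion \emph{always} holds for duples witnessed by non-redundant atoms — so where can it fail? Only when a duple $r$ over $Q$ is negative in $M^{|Q}$ but \emph{positive} in $M$; by the above this cannot happen via any non-redundant-atom witness, so it never happens. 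That would prove the homomorphism always exists, which is false — so the real content must be about the zero atom $\ominus_Q$ and the sixth axiom.

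So I would re-examine \emph{which model $M^{|Q}$ actually is}: it is spawned by $\{\phi^{|Q}: \phi\in M,\ U^c(\phi)\cap Q\neq\emptyset\}$, and by Theorem \ref{atomizationsAreModels} this set of atoms spawns an atomized semilattice only after possibly adjoining $\ominus_Q$ — and $M^{|Q}$ is by definition the subalgebra, which is a genuine atomized semilattice, hence \emph{does} contain an atom in every constant's lower segment. The point is that the map $t\mapsto t^{|Q}$ on terms, as used to build $M^{|Q}$, uses only the restricted atoms; for this to be a homomorphism of \emph{atomized} semilattices in the sense that it respects all the structure, one needs $\ominus_Q$ to be accounted for on the $M$ side. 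Concretely: the forward direction — if such a homomorphism $h$ exists, then $\ominus_Q$ non-redundant in $M^{|Q}$ — I would prove by Theorem \ref{ceroTheorem}(ii): $\ominus_Q$ non-redundant means some constant $q\in Q$ satisfies $M^{|Q}\models\forall x(q\leq x)$; this is a conjunction of positive duples over $Q$, pull it back along $h$ to get the corresponding positive duples in $M$, i.e. $M\models(q\leq c)$ for all $c\in Q$, hence no atom of $M$ discriminates any $(q,c)$ with $c\in Q$, which forces every atom $\phi<q$ to satisfy $U^c(\phi)\cap Q=\{q\}$ essentially, and then (chasing AS5 as in the proof of Theorem \ref{ceroTheorem}) $q$ has only $\ominus_Q$ in its lower segment in $M^{|Q}$, confirming non-redundancy — actually the cleaner route is the contrapositive, and I would instead argue directly. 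For the converse — $\ominus_Q$ non-redundant in $M^{|Q}$ implies the homomorphism exists — I would show well-definedness fails only if there is a duple $(a,b)$ over $Q$ positive in $M$ but negative in $M^{|Q}$; the only way $M^{|Q}$ gains a discriminating atom not coming from a restricted atom of $M$ is through $\ominus_Q$ itself; and $\ominus_Q$ discriminates no duple at all (it lies below everything), so this is impossible — \emph{unless} $\ominus_Q$'s presence is forced precisely by the sixth axiom leaving some $q\in Q$ with empty lower segment, which is exactly the non-redundancy hypothesis ruling out the bad case.

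The main obstacle I anticipate is getting the bookkeeping of $\ominus_Q$ exactly right: distinguishing the raw set of restricted atoms (which may violate AS6) from the genuine model $M^{|Q}$ (which satisfies AS6, possibly only after adjoining $\ominus_Q$), and pinning down that the homomorphism exists iff that adjunction is "visible" in $M$ — i.e. iff $\ominus_Q$ is \emph{non-redundant}, equivalently (by Theorem \ref{ceroTheorem}) iff some $q\in Q$ is a neutral element of $M^{|Q}$, equivalently iff $M$ already forces $q\leq c$ for all $c\in Q$. I would organize the proof around that triple equivalence, using Theorem \ref{ceroTheorem}(ii) as the hinge and Theorem \ref{restrictionAndRedundacyLemma} to handle all witnesses coming from honest atoms.
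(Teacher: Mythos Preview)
Your reduction of well-definedness to ``every negative duple of $M^{|Q}$ over $Q$ is already negative in $M$'' is the central error. The map in question sends a term $t$ over $C$ (not over $Q$) to the term $t^{|Q}$ over $Q$, so what must be checked is: if $M \models (t = s)$ for terms $t, s$ over $C$, then $M^{|Q} \models (t^{|Q} = s^{|Q})$. The pair $(t, s)$ is a duple over $C$, and the obstruction to well-definedness comes precisely from the constants in $C \setminus Q$ appearing in $t$ or $s$. For instance, two terms $r \odot s$ and $r' \odot s$ with $r, r'$ over $Q$ and $s$ over $C \setminus Q$ may become equal in $M$ while $r \neq r'$ in $M^{|Q}$. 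Your atom-by-atom analysis over duples of $Q$ correctly recovers (via Theorem~\ref{restrictionLemma}) that $M$ and $M^{|Q}$ agree on all duples over $Q$, which is exactly why you find no obstruction there and are driven to the false conclusion that the homomorphism always exists. The real obstruction lives outside the language of $Q$, and your framework never touches it.

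The paper's argument avoids this tangle by working directly with the neutral element. For the direction ``homomorphism exists $\Rightarrow$ $\ominus_Q$ non-redundant'', pick any term $s$ over $C \setminus Q$: the restriction property gives $h(r \odot s) = r$ for every term $r$ over $Q$, whence $h(s) \odot r = h(r) \odot h(s) = r$, i.e.\ $h(s) \leq r$ for all such $r$; thus $h(s)$ is a neutral element of $M^{|Q}$, and Theorem~\ref{ceroTheorem}(ii) finishes. Your sketch of this direction is muddled: you invoke the neutral element $q$ as given by Theorem~\ref{ceroTheorem}(ii) and then ``pull back'' its defining duples along $h$, but the existence of $q$ is precisely what you need to establish. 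For the converse, the paper uses the neutral element $k$ supplied by Theorem~\ref{ceroTheorem}(ii) to define $h$ on constants by sending $Q$ identically and $C \setminus Q$ to $k$; then $h(r \odot s) = r \odot k = r$ exhibits the restriction property. You did correctly identify Theorem~\ref{ceroTheorem}(ii) as the hinge, but the argument should run through the neutral element and the behaviour of $h$ on terms over $C \setminus Q$, not through discriminants and freeness of duples over $Q$.
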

\begin{proof}
Assume $\ominus_Q$ is a non-redundant atom of $M^{|Q}$. Theorem \ref{ceroTheorem} says that there is at least one constant $k \in Q$ such that $M^{|Q} \models \forall x (k \leq x)$.  Let $h$ map each constant of $C - Q$ to $k$ and each constant in $Q$ to the constant with the same name in $M^{|Q}$. Each term $t$ over $C$ is either a term over $Q$ or a term over $C - Q$ or an idempotent summation of a term $r$ over $Q$ and a term $s$ over $C - Q$. Since $h(r \odot s) =  h(r) \odot h(s) =  r \odot k = r$, then $h$ is a homomorphism from $M$ onto its subalgebra $M^{|Q}$ that maps each term over $C$ to its restriction to $Q$. \\ 
On the the other direction, assume that there is a homomorphism $h$ from $M$ onto its subalgebra $M^{|Q}$ that maps each term over $C$ to its restriction to $Q$. Choose a term $s$ over $C - Q$. For every term $r$ over $Q$ we have $h(r) \odot h(s) =  h(r \odot s)$ and, since $h$ maps $r \odot s$ to its restriction to $Q$ then $h(r \odot s) = h(r) = r$. It follows that $h(s)$ behaves as neutral element, i.e. $h(s) < x$ for every $x$ in $M^{|Q}$ and, again from theorem \ref{ceroTheorem}, a neutral element in $M^{|Q}$ implies $\ominus_Q$ is a non-redundant atom of $M^{|Q}$.
\end{proof}

\bigskip

Note that it is is always possible to add a new constant to $C$ and $Q$ that behaves as a neutral element and then build a homomorphism from $M$ onto $M^{|Q}$ that maps each term over $C$ to its restriction to $Q$.

\bigskip
\bigskip

\begin{theorem}  \label{restrictionAndCrossingCommute}  Let $r$ be a duple over $Q \subset C$ and $M$ a model over $C$: \\
i) ${\bf{dis}}_{[|Q] M}(r)  = [|Q] {\bf{dis}}_{M}(r)$, \\
ii) Restriction and crossing commute: $\square_{r} [|Q] M = [|Q] \square_{r} M$. 
\end{theorem}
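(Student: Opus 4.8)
The plan is to compare the two sides directly at the level of atoms and show they have \emph{literally the same} atomization, so that no appeal to ``equal up to redundant atoms'' is needed. Throughout write $r=(r_L,r_R)$ and note that, $r$ being a duple over $Q$, we have ${\bf{C}}(r_L),{\bf{C}}(r_R)\subseteq Q$. The basic tool, already contained in the proof of Theorem~\ref{restrictionLemma}, is that for any atom $\phi$ of $M$ and any term $t$ over $Q$ one has $\phi<t \Leftrightarrow \phi^{|Q}<t$, since ${\bf{C}}(t)\cap{\bf{U}}^{c}(\phi)={\bf{C}}(t)\cap({\bf{U}}^{c}(\phi)\cap Q)={\bf{C}}(t)\cap{\bf{U}}^{c}(\phi^{|Q})$; in particular $\phi<t$ guarantees that $\phi^{|Q}$ exists, and the truth of ``$\phi^{|Q}<t$'' depends only on the restricted atom, not on which preimage is chosen. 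I would also use that, by Theorem~\ref{restrictionLemma}, $[|Q]M$ is atomized by the set of restrictions $\{\phi^{|Q}:\phi\in M\}$.

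First I would dispatch (i). Expanding, ${\bf{dis}}_{[|Q]M}(r)={\bf{L}}^{a}_{[|Q]M}(r_L)-{\bf{L}}^{a}_{[|Q]M}(r_R)$; by Theorem~\ref{atomicSegmentFromTermTheorem}(iv) applied to the atomization $\{\phi^{|Q}:\phi\in M\}$, together with the equivalence above, ${\bf{L}}^{a}_{[|Q]M}(r_L)=\{\phi^{|Q}:\phi\in M,\ \phi<r_L\}$ and similarly for $r_R$ (the ``$\phi^{|Q}$ exists'' clause being automatic when $\phi<r_L$). Since ``$\phi<r_L$'' and ``$\phi<r_R$'' are properties of ${\bf{U}}^{c}(\phi^{|Q})$ alone, because ${\bf{C}}(r_L),{\bf{C}}(r_R)\subseteq Q$, a restricted atom lies in ${\bf{dis}}_{[|Q]M}(r)$ exactly when its preimages lie in ${\bf{dis}}_{M}(r)$, which yields ${\bf{dis}}_{[|Q]M}(r)=\{\phi^{|Q}:\phi\in{\bf{dis}}_{M}(r)\}=[|Q]\,{\bf{dis}}_{M}(r)$.

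Next, for (ii), I would set $H={\bf{dis}}_{M}(r)$ and $B={\bf{L}}^{a}_{M}(r_R)$, so that $\square_{r}M$ is atomized by $A:=(M-H)\cup(H\bigtriangledown B)$, and by Theorem~\ref{restrictionLemma} applied to $\square_{r}M$ the model $[|Q]\square_{r}M$ is atomized by $[|Q]A=[|Q](M-H)\cup[|Q](H\bigtriangledown B)$. On the other side $\square_{r}[|Q]M$ is atomized by $([|Q]M-H'')\cup(H''\bigtriangledown B'')$ with $H''={\bf{dis}}_{[|Q]M}(r)$, $B''={\bf{L}}^{a}_{[|Q]M}(r_R)$; by (i) $H''=[|Q]H$, and the same argument gives $B''=[|Q]B$. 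Three things remain. (a) Restriction distributes over $\bigtriangledown$, giving $[|Q](H\bigtriangledown B)=([|Q]H)\bigtriangledown([|Q]B)=H''\bigtriangledown B''$; this is immediate from ${\bf{U}}^{c}(\lambda\bigtriangledown\rho)\cap Q=({\bf{U}}^{c}(\lambda)\cap Q)\cup({\bf{U}}^{c}(\rho)\cap Q)$, with all restrictions existing since each $\lambda\in H$ satisfies $\lambda<r_L$. (b) $[|Q]M-H''\subseteq[|Q](M-H)$, immediate since a restricted atom outside $[|Q]H$ comes from a preimage outside $H$. (c) $[|Q](M-H)\subseteq[|Q]M-H''$: if $\phi\in M-H$ but $\phi^{|Q}\in H''$, then ${\bf{U}}^{c}(\phi^{|Q})$ meets ${\bf{C}}(r_L)$ and misses ${\bf{C}}(r_R)$, hence so does ${\bf{U}}^{c}(\phi)$ (as ${\bf{C}}(r_L),{\bf{C}}(r_R)\subseteq Q$), forcing $\phi\in{\bf{dis}}_{M}(r)=H$, a contradiction. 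Combining (a)--(c), $[|Q]A$ and $([|Q]M-H'')\cup(H''\bigtriangledown B'')$ are the same set of atoms, so $[|Q]\square_{r}M=\square_{r}[|Q]M$.

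The step I expect to be the crux is (c), i.e. getting the exact set equality $[|Q](M-H)=[|Q]M-H''$ rather than merely equality of the spawned models up to redundant atoms: the restriction map on atoms is many-to-one, and the potential discrepancy vanishes only because ${\bf{C}}(r_L),{\bf{C}}(r_R)\subseteq Q$ makes both ``$\phi$ discriminates $r$'' and ``$\phi<r_R$'' depend solely on $\phi^{|Q}$. A small loose end, absorbed by the same argument, is the degenerate case in which $r$ is already positive in $[|Q]M$: then $H''=\emptyset$, which forces $H=\emptyset$ and $\square_{r}M=M$, and both sides reduce to $[|Q]M$.
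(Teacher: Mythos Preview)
Your proof is correct and follows essentially the same route as the paper: both arguments reduce (ii) to the set identity $[|Q](M-H)=[|Q]M-[|Q]H$ proved by double inclusion, together with the distributivity $[|Q](H\bigtriangledown B)=([|Q]H)\bigtriangledown([|Q]B)$, all resting on the key fact that for terms over $Q$ the relations $\phi<t$ and $\phi^{|Q}<t$ coincide. Your explicit labeling of steps (a)--(c) and your remark that membership in $H$ depends only on $\phi^{|Q}$ make the many-to-one issue slightly more transparent than in the paper, but the substance is the same.
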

\begin{proof}
(i) Let $r = (r_L, r_R)$.  The discriminant of $r$ in $[|Q] M$ is: \[
{\bf{dis}}_{[|Q] M}(r)  = \{ \phi : (\phi \in [|Q] M) \wedge (\phi < r_L) \wedge (\phi  \not< r_R) \} =   \{ \phi^{|Q} : (\phi \in M) \wedge (\phi^{|Q} < r_L) \wedge (\phi^{|Q}  \not< r_R) \},
\]
where we have used theorem \ref{restrictionAndRedundacyLemma}. Since $r$ is a duple over $Q$ then: \[
 \{ \phi^{|Q} : (\phi \in M) \wedge (\phi^{|Q} < r_L) \wedge (\phi^{|Q}  \not< r_R) \} = \{ \phi^{|Q} : (\phi \in M) \wedge (\phi < r_L) \wedge (\phi \not< r_R) \} = |Q] {\bf{dis}}_{M}(r).
\]
(ii) From the definition of full crossing: \[
\square_{r} [|Q] M =  ([|Q] M - {\bf{dis}}_{[|Q] M}(r)) +  {\bf{dis}}_{[|Q] M}(r) \bigtriangledown {\bf{L}}^{a}_{[|Q] M}(r_R) = 
\]\[
=  ([|Q] M - [|Q] {\bf{dis}}_{M}(r)) + ([|Q] {\bf{dis}}_{M}(r)) \bigtriangledown  ([|Q] {\bf{L}}^{a}_{M}(r_R)).
\]
where we have used proposition (i) and taken into account that $r_R$ is a term over $Q$.  \\
Consider the sets of atoms $A = M - {\bf{dis}}_{M}(r)$ and $B = [|Q] M - [|Q] {\bf{dis}}_{M}(r)$. We are going to show now that $[|Q] A = B$. Suppose there is an atom $\alpha \in [|Q] A$ such that $\alpha \not\in B$. Because $\alpha \in [|Q] A$ implies $\alpha \in [|Q] M$ then $\alpha \not\in B$ requires $\alpha \in [|Q] {\bf{dis}}_{M}(r)$. Since $r$ is a duple over $Q$, for every $\alpha \in [|Q] {\bf{dis}}_{M}(r)$ we have that $\forall \phi ( (\alpha = \phi^{|Q}) \Rightarrow  (\phi \in {\bf{dis}}_{M}(r)))$ which implies $\forall \phi ( (\alpha = \phi^{|Q}) \Rightarrow  (\phi \not\in A))$. Since theorem \ref{restrictionAndRedundacyLemma} says that such $\phi$ should exist in $M$, then $\alpha \not\in [|Q] A$, contradicting our assumption. Hence: $[|Q] A \subseteq B$.  \\
Suppose now there is an atom $\beta \in B$ such that  $\beta \not\in [|Q] A$. Like before, $\beta \in B$ implies $\beta \in [|Q] M$ and then $\beta \not\in [|Q] A$ requires $\forall \phi ( (\beta = \phi^{|Q}) \Rightarrow  (\phi \in {\bf{dis}}_{M}(r)))$, which in turn, implies $\beta \in [|Q] {\bf{dis}}_{M}(r)$ and then $\beta \not\in B$; a contradiction. It follows $B \subseteq [|Q] A$ and considering the inclusion in the other direction  $[|Q] A = B$. \\
Since every atom in the discriminant ${\bf{dis}}_{M}(r)$ and every atom in ${\bf{L}}^{a}_{M}(r_R)$ have a restriction to $Q$ then $([|Q] {\bf{dis}}_{M}(r)) \bigtriangledown  ([|Q] {\bf{L}}^{a}_{M}(r_R)) = [|Q]  ({\bf{dis}}_{M}(r)\bigtriangledown {\bf{L}}^{a}_{M}(r_R))$. Substituting above and also replacing $B$ by $[|Q] A$ we get $([|Q] M - [|Q] {\bf{dis}}_{M}(r)) + ([|Q] {\bf{dis}}_{M}(r)) \bigtriangledown  ([|Q] {\bf{L}}^{a}_{M}(r_R)) = [|Q] \square_{r} M$, which proves the theorem.
\end{proof}

\bigskip

\begin{theorem}  \label{freedomCommutesWithCrossing} Let $C$ be a set of constants and $Q \subseteq C$. Let $M$ be a semilattice model over $C$ and $N$ a semilattice model over $Q$. Let $r$ be a duple over $Q$. If $M$ is as free or freer than $N$ then $\square_r M$ is as free or freer than $\square_r N$.
\end{theorem}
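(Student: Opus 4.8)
The plan is to bring both models onto the same set of generating constants and then read off the conclusion from theorem \ref{fullCrossingIsFreestTheorem}, which expresses full crossing as a ``freest model'' operator for which monotonicity is essentially free. If $Q=C$ there is nothing to reduce, so assume $Q\subsetneq C$.

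Step one is the reduction. Let $M^{|Q}$ be the restriction of $M$ to $Q$. Being the subalgebra of $M$ generated by $Q$, its order agrees with that of $M$ on every pair of terms over $Q$; this is also visible from theorem \ref{restrictionLemma}, where one sees that a duple over $Q$ is discriminated in $M^{|Q}$ exactly when it is discriminated in $M$. Hence $M^{|Q}$ satisfies precisely the same positive and negative duples over $Q$ as $M$, so, since $M$ is as free or freer than $N$, so is $M^{|Q}$. The same observation applied to $\square_r M$ shows that $\square_r M$ and its restriction $[|Q]\square_r M$ agree on all duples over $Q$. Since restriction and crossing commute (theorem \ref{restrictionAndCrossingCommute}(ii)), $[|Q]\square_r M = \square_r(M^{|Q})$. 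Therefore it suffices to prove the statement with $M$ replaced by $M^{|Q}$; that is, we may assume $M$ and $N$ are both over $Q$.

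So suppose now $M$ and $N$ are over the same constants $Q$, $M$ is as free or freer than $N$, and $r$ is a duple over $Q$. By definition \ref{freerModelDefinition}, taking contrapositives, this says exactly $Th_0^{+}(M)\subseteq Th_0^{+}(N)$. Next I would record that $\square_r M = F_Q(Th_0^{+}(M)\cup r^{+})$ in all cases: when $M\models r^{-}$ this is theorem \ref{fullCrossingIsFreestTheorem}, and when $M\models r^{+}$ one has $\square_r M = M$ by theorem \ref{crossingProduceLessFreeModels}, while $M = F_Q(Th_0^{+}(M))$ and $r^{+}\in Th_0^{+}(M)$, so the identity again holds. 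Likewise $\square_r N = F_Q(Th_0^{+}(N)\cup r^{+})$.

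Finally, from $Th_0^{+}(M)\cup r^{+}\subseteq Th_0^{+}(N)\cup r^{+}$: the model $F_Q(Th_0^{+}(N)\cup r^{+})$ satisfies the larger theory and hence also the smaller one $Th_0^{+}(M)\cup r^{+}$; being the freest model of the smaller theory, $F_Q(Th_0^{+}(M)\cup r^{+})$ is as free or freer than $F_Q(Th_0^{+}(N)\cup r^{+})$, i.e. $\square_r M$ is as free or freer than $\square_r N$, which together with the reduction completes the proof. The only point I expect to require care is the bookkeeping in the reduction: one must consistently interpret ``as free or freer'' relative to duples over the smaller constant set, and check that theorems \ref{restrictionLemma} and \ref{restrictionAndCrossingCommute} really do transfer the truth values of those duples without loss. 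Once the problem sits over the single constant set $Q$, the monotonicity of $F_Q(\cdot)$ in the imposed positive duples does the rest.
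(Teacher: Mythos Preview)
Your proof is correct and follows the same overall architecture as the paper's: first reduce to the case where both models live over the same constants $Q$ via the restriction $M^{|Q}$ and the commutation of restriction with crossing (theorems \ref{restrictionLemma} and \ref{restrictionAndCrossingCommute}), then handle that case using the identification $\square_r M = F_Q(Th_0^{+}(M)\cup r^{+})$ from theorem \ref{fullCrossingIsFreestTheorem}.

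The difference is in how the equal-constants case is argued. You invoke the monotonicity of the freest-model operator directly: from $Th_0^{+}(M)\cup r^{+}\subseteq Th_0^{+}(N)\cup r^{+}$, any model of the larger theory is a model of the smaller, so the freest model of the smaller theory is freer. The paper instead argues by contradiction, building the union model $\square_r N + \square_r M$ (via theorem \ref{unionOfModels}) and observing that it would be a model of $Th_0^{+}(M)\cup\{r^{+}\}$ violating a positive duple of its freest model. This is the same monotonicity in disguise; your direct route avoids the detour through the union construction. You also make explicit the harmless case $M\models r^{+}$ (where $\square_r M = M$), which the paper leaves implicit. In the reduction, the paper proceeds in the opposite order (equal-constants first, then general $Q$), but the content is the same.
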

\begin{proof}
$M$ is freer or as free as $N$, therefore $Th_0^{-}(N) \subseteq Th_0^{-}(M)$. Assume first that $Q = C$. \\
When $Q = C$, it is also true that $Th_0^{+}(M) \subseteq Th_0^{+}(N)$. Suppose there is a duple $s \in Th_0^{-}(N)$ for which $\,\square_r N \models s^{-}$ and $\square_r M \models s^{+}$. Consider the union model $\square_r N + \square_r M$ spawned by the atoms of both models. From theorem \ref{unionOfModels} follows that $\square_r N + \square_r M \models  Th_0^{+}(M) \cup \{ r^{+}\} \cup \{ s^{-}\}$. Theorem \ref{fullCrossingIsFreestTheorem} says that  $\square_r M$ is the freest model that satisfies $Th_0^{+}(M) \cup \{ r^{+}\}$, i.e. $\square_r M =  F_C(Th_0^{+}(M) \cup \{ r^{+}\})$ and we have assumed $\square_r M \models s^{+}$. However, model $\square_r N + \square_r M$ is a model of $Th_0^{+}(M) \cup \{ r^{+}\}$ that do not satisfy $s^{+}$ which contradicts that the freest model satisfies $s^{+}$. Therefore, such $s$ does not exist and then $Th_0^{-}(\square_r N) \subseteq Th_0^{-}(\square_r M)$. \\
Consider now that $Q \subset C$. All the duples in $Th_0^{-}(N)$ are duples over $Q$. Since every positive or negative duple over $Q$ satisfied by $M$ is also satisfied by its subalgebra $[|Q]M$ then $Th_0^{-}(N) \subseteq Th_0^{-}([|Q]M) = Th_0^{-}(M) \cap (F_Q(\emptyset) \times F_Q(\emptyset))$. Since $N$ and $[|Q]M$ are models over the same constants, $Q$, we have just shown that $Th_0^{-}(N) \subseteq Th_0^{-}([|Q]M)$ implies that $Th_0^{-}(\square_r N) \subseteq Th_0^{-}(\square_r [|Q]M)$ and, using theorem \ref{restrictionAndCrossingCommute} which says that crossing and restriction commute, $Th_0^{-}(\square_r N) \subseteq Th_0^{-}( [|Q]\square_rM) \subseteq Th_0^{-}(\square_rM)$ and then $\square_r M$ is as free or freer than $\square_r N$.
\end{proof}

\bigskip

\begin{theorem}  \label{construibleSubmodelTheorem}  Let $C$ be a set of constants and $Q \subseteq C$. Let $M$ be a semilattice model over $C$ and $N$ a semilattice model over $Q$. The following statements are equivalent: \\
i)  $M$ is as free or freer than $N$, \\
ii) There is a set of duples $\{r_1, r_2,...,r_v\}$ over $C$ such that $N + [\ominus_c] = \square_{r_1} \square_{r_2}... \square_{r_v} M$, \\
iii) There is a set of duples $\{r_1, r_2,...,r_u\}$ over $Q$ such that $N = [|Q] \square_{r_1} \square_{r_2}... \square_{r_u} M$, \\
iv) $N$ is a subset model of $M$, written $N \subseteq M$, i.e. the atoms of $N$ are all atoms of $M$ or redundant with $M$.
\end{theorem}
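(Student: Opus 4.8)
The plan is to prove the four statements equivalent with (i) as a hub: (i)$\Leftrightarrow$(ii), (i)$\Leftrightarrow$(iii), and (i)$\Leftrightarrow$(iv). Two standing facts carry most of the weight. First, since $C$ is finite every positive theory $Th_0^{+}(\cdot)$ is a finite set of duples. Second, every model is the freest model of its own positive theory, $M=F_C(Th_0^{+}(M))$ (this is exactly what the proof of Theorem \ref{atomizationExistsTheorem} shows). When $Q\subsetneq C$ I would move between the ``over $Q$'' and ``over $C$'' pictures through $N+[\ominus_c]$: since $\ominus_c$ discriminates no duple and $[|Q](N+[\ominus_c])=N$, the models $N$ and $N+[\ominus_c]$ carry the same duples over $Q$; and since restriction preserves every duple over $Q$, the clause ``$M$ is as free or freer than $N$'' in (i) amounts to ``$M$ is as free or freer than $N+[\ominus_c]$'', equivalently ``$[|Q]M$ is as free or freer than $N$''.

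For (i)$\Leftrightarrow$(ii): if $M$ is as free or freer than $N+[\ominus_c]$ then $Th_0^{+}(M)\subseteq Th_0^{+}(N+[\ominus_c])$ (fewer positive duples), so listing $Th_0^{+}(N+[\ominus_c])=\{r_1,\dots,r_v\}$ and full-crossing all the $r_i$ into $M$ gives, by Theorems \ref{fullCrossingIsFreestTheorem} and \ref{fullCrossingIsCommutative}, the model $F_C(Th_0^{+}(M)\cup Th_0^{+}(N+[\ominus_c]))=F_C(Th_0^{+}(N+[\ominus_c]))=N+[\ominus_c]$, which is (ii). Conversely, iterating Theorem \ref{crossingProduceLessFreeModels} along $\square_{r_1}\cdots\square_{r_v}M$ shows $M$ is as free or freer than the result, i.e. (i).

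For (i)$\Leftrightarrow$(iii): reading (i) as ``$[|Q]M$ is as free or freer than $N$'', full-crossing the finitely many duples of $Th_0^{+}(N)$ into $[|Q]M$ yields $N$ exactly as in the previous paragraph but over $Q$, and Theorem \ref{restrictionAndCrossingCommute} lets me pull the restriction outside, giving $N=[|Q]\square_{r_1}\cdots\square_{r_u}M$ with the $r_i$ over $Q$; this is (iii). Conversely, Theorem \ref{restrictionAndCrossingCommute} rewrites $[|Q]\square_{r_1}\cdots\square_{r_u}M$ as $\square_{r_1}\cdots\square_{r_u}[|Q]M$, and Theorem \ref{crossingProduceLessFreeModels} then shows $[|Q]M$ is as free or freer than $N$, i.e. (i); Theorem \ref{freedomCommutesWithCrossing} is the natural tool for propagating freedom through the chain of crossings, and Theorem \ref{homomorphismIfZeroAtom} (the neutral-constant device) for passing back to the ``over $C$'' form.

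For (i)$\Leftrightarrow$(iv): the quick direction is (iv)$\Rightarrow$(i). By Theorem \ref{redundantIsUnionOfAtomsTheorem} a redundant atom is a union of non-redundant atoms of $M$, and $\ominus_c$ lies in every model (Theorem \ref{ceroTheorem}); hence every atom of $N+[\ominus_c]$ is an atom of $M$ or a union of atoms of $M$, and Theorem \ref{unionWithFreer}(iii) gives (i). For (i)$\Rightarrow$(iv) I would take a non-redundant atom $\phi\neq\ominus_c$ of $N$ and apply Theorem \ref{uniqueRelationTheorem}: there is a pinning duple $r=(c,T_\phi)$ that $\phi$ discriminates in $N$ and that no other atom of $N$ discriminates, so $N\models r^{-}$; freedom forces $M\models r^{-}$, so some atom $\eta$ of $M$ discriminates $r$, whence $\eta<c$ and ${\bf{U}}^{c}(\eta)\cap Q\subseteq{\bf{U}}^{c}(\phi)$; interpreting this as a statement about the restriction $\eta^{|Q}$ shows $\eta$ certifies redundancy of $\phi$, and running over the constants of ${\bf{U}}^{c}(\phi)$, together with the reduction of redundant atoms of $N$ to the non-redundant case via Theorem \ref{redundantIsUnionOfAtomsTheorem}, gives (iv). I expect the main obstacle of the whole theorem to live precisely here: keeping the $Q\subsetneq C$ bookkeeping consistent, so that the freedom comparison — which a priori only constrains duples over $Q$ — lines up with the ``over $C$'' construction in (ii) and with Definition \ref{redundantAtomDefi}. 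This is where passing systematically through $N+[\ominus_c]$ (or adjoining a neutral constant as in Theorem \ref{homomorphismIfZeroAtom}) earns its keep, and it is the only place where more than a routine invocation of the earlier results is required.
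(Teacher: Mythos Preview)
Your plan for (i)$\Leftrightarrow$(ii), (i)$\Leftrightarrow$(iii), and (iv)$\Rightarrow$(i) is essentially the paper's: full-cross the (finite) positive theory to go from $M$ to $N+[\ominus_c]$ or to $N$ via $[|Q]M$, invoke Theorem~\ref{fullCrossingIsFreestTheorem} and Theorem~\ref{crossingProduceLessFreeModels}, and use Theorem~\ref{restrictionAndCrossingCommute} to commute restriction with crossing. The paper uses the difference $Th_0^{+}(N+[\ominus_c])-Th_0^{+}(M)$ rather than the whole positive theory, but that is immaterial since crossing an already-positive duple does nothing.

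The substantive divergence is (i)$\Rightarrow$(iv). The paper does \emph{not} attempt this directly; it proves (ii)$\Rightarrow$(iv) instead, and that step is essentially free: by construction every atom appearing in $\square_{r_1}\cdots\square_{r_v}M$ is either an atom of $M$ or a union $\bigtriangledown$ of atoms of $M$, hence (Theorem~\ref{redundantIsUnionOfAtomsTheorem}) in $M$ or redundant with $M$; since the atoms of $N$ sit inside the atomization of $N+[\ominus_c]$, you are done. This is a one-line observation, and it is the route you should take.

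Your pinning-duple argument for (i)$\Rightarrow$(iv) has a real gap in the case $Q\subsetneq C$, and it is exactly the gap you worry about at the end. For a non-redundant atom $\phi$ of $N$ the pinning term $T_\phi$ is a term over $Q$, and the atom $\eta\in M$ that discriminates $(c,T_\phi)$ only satisfies ${\bf U}^c(\eta)\cap Q\subseteq{\bf U}^c(\phi)$. If $\eta$ carries constants in $C\setminus Q$ then $\phi$ is \emph{not} wider than $\eta$, so $\eta$ does not witness redundancy of $\phi$ with $M$ in the sense of Definition~\ref{redundantAtomDefi}. Passing to $\eta^{|Q}$ only shows $\phi$ is redundant with $M^{|Q}$, which is not what (iv) asserts. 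Neither of the fixes you float closes this: moving to $N+[\ominus_c]$ gives a pinning duple over $C$, but (i) only controls duples over $Q$, so you cannot conclude $M$ rejects it; and Theorem~\ref{homomorphismIfZeroAtom} concerns a homomorphism $M\to M^{|Q}$, which again speaks to $M^{|Q}$ rather than $M$. Drop the pinning-duple detour and route (i)$\Rightarrow$(iv) through (ii) as the paper does.
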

\begin{proof}
$(i) \Rightarrow (ii)$ Proposition (i) assumes $Th_0^{-}(N) \subseteq Th_0^{-}(M)$, and since $\ominus_c$ discriminates no duple $Th_0^{-}(N + [\ominus_c]) = Th_0^{-}(N)$ and then $Th_0^{-}(N + [\ominus_c]) \subseteq Th_0^{-}(M)$. Because $N + [\ominus_c]$ is a model over $C$, it holds that $Th_0^{+}(M) \subseteq Th_0^{+}(N + [\ominus_c])$. Consider the set of duples $R =  \{r_1, r_2,...,r_v\}  = Th_0^{-}(M) - Th_0^{-}(N + [\ominus_c]) = Th_0^{+}(N + [\ominus_c]) - Th_0^{+}(M)$. It follows that  $\square_{r_1} \square_{r_2}... \square_{r_v} M$ produces the freest model of $Th_0^{+}(M) \cup R^{+} = Th_0^{+}(N + [\ominus_c])$, a model that is equal $N + [\ominus_c]$.  \\
$(ii) \Rightarrow (i)$ According to theorem \ref{crossingProduceLessFreeModels}, full crossing either leaves the model unaltered or produces strictly less free models. Therefore, $M$ is as free or freer than $N + [\ominus_c]$, in other words, $Th_0^{-}(N + [\ominus_c]) \subseteq Th_0^{-}(M)$ and, since $Th_0^{-}(N + [\ominus_c]) = Th_0^{-}(N)$, then $Th_0^{-}(N) \subseteq Th_0^{-}(M)$ and $M$ is as free or freer than $N$.  \\
$(i) \Rightarrow (iii)$   Proposition (i) assumes $Th_0^{-}(N) \subseteq Th_0^{-}(M)$. Since $Th_0^{-}(N)$ is a set of duples over $Q$ then $Th_0^{-}(N) \subseteq Th_0^{-}(M) \cap (F_Q(\emptyset) \times F_Q(\emptyset)) = Th_0^{-}([|Q]M)$.  Consider the set of duples $R =  \{r_1, r_2,...,r_u\}  = Th_0^{-}([|Q]M) - Th_0^{-}(N) = Th_0^{+}(N) - Th_0^{+}([|Q]M)$. Note that $N \models r^{+}$ for every $r \in R$. Since both are models over $Q$ then $Th_0^{+}([|Q]M) \subseteq Th_0^{+}(N)$ and it follows that $\square_{r_1} \square_{r_2}... \square_{r_u} [|Q]M$ produces the freest model of $Th_0^{+}([|Q]M) \cup R^{+} = Th_0^{+}(N)$, a model that is equal to $N$. Theorem \ref{restrictionAndCrossingCommute}  says that full crossing and restriction commute so $[|Q] \square_{r_1} \square_{r_2}... \square_{r_u} M = N$. \\
$(iii) \Rightarrow (i)$ Theorem \ref{restrictionAndCrossingCommute} implies $N = [|Q] \square_{r_1} \square_{r_2}... \square_{r_u} M =\square_{r_1} \square_{r_2}... \square_{r_u}  [|Q]M$ and, then, theorem \ref{crossingProduceLessFreeModels} says that $[|Q]M$ is as free or freer than $N$, i.e, $Th_0^{-}(N) \subseteq Th_0^{-}([|Q]M)$. Since $Th_0^{-}(N)$ is a set of duples over $Q$ and every positive or negative duple over $Q$ satisfied by $M$ is also satisfied by its subalgebra $[|Q]M$ then $Th_0^{-}(N) \subseteq Th_0^{-}([|Q]M) = Th_0^{-}(M) \cap (F_Q(\emptyset) \times F_Q(\emptyset))$. It follows $Th_0^{-}(N) \subseteq Th_0^{-}(M)$, and $M$ is as free or freer than $N$.  \\
$(ii) \Rightarrow (iv)$ Since $N + [\ominus_c] = \square_{r_1} \square_{r_2}... \square_{r_v} M$ then the atoms of $N$ are the result of successive crossing operations and then they are all either atoms of $M$ or union of atoms of $M$, hence, redundant with $M$.  \\
$(iv) \Rightarrow (i)$  Theorem \ref{redundantAtom} tells us that atoms redundant with $M$ only discriminate duples that are discriminated by atoms of $M$, therefore $Th_0^{-}(N) \subseteq Th_0^{-}(M)$. 
\end{proof}

\bigskip
\bigskip

\begin{theorem}  \label{homomorphismAndFreedomTheorem}  
i)  Let $M$ and $N$ be two semilattice models over $C$. If $M$ is as free or freer than $N$ then there is a homomorphism from $M$ onto $N$. \\
ii) Let $C$ be a set of constants and $Q \subseteq C$. Let $M$ be a semilattice model over $C$ and $N$ a semilattice model over $Q$. If $M$ is as free or freer than $N$ there is a homomorphism from $M^{|Q}$ onto $N$.
\end{theorem}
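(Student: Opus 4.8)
The plan is to build the homomorphism in part (i) directly from the natural homomorphisms $\nu_M$ and $\nu_N$ out of the term algebra $F_C(\emptyset)$, and then to reduce part (ii) to part (i) by passing to the restriction $M^{|Q}$.

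For (i), first I would reformulate the hypothesis. By Definition \ref{freerModelDefinition}, ``$M$ as free or freer than $N$'' says $Th_0^{-}(N) \subseteq Th_0^{-}(M)$; taking complements inside the set of all duples over $C$, this is equivalent to $Th_0^{+}(M) \subseteq Th_0^{+}(N)$. Next I would define $h \colon M \to N$ by $h(\nu_M(t)) = \nu_N(t)$ for every term $t \in F_C(\emptyset)$. The one point that needs checking is that $h$ is well defined: if $\nu_M(t) = \nu_M(s)$ then $M \models (t = s)$, so both $(t,s)$ and $(s,t)$ lie in $Th_0^{+}(M) \subseteq Th_0^{+}(N)$, hence $N \models (t = s)$ and $\nu_N(t) = \nu_N(s)$. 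Since $\nu_M$ is surjective, $h$ is then defined on all of $M$; it is a homomorphism because $h(\nu_M(t) \odot \nu_M(s)) = h(\nu_M(t \odot s)) = \nu_N(t \odot s) = \nu_N(t) \odot \nu_N(s)$ and it fixes constants; and it is onto because $N$ is generated by its constants, so every element of $N$ is $\nu_N(t)$ for some term $t$. (Equivalently, $Th_0^{+}(M) \subseteq Th_0^{+}(N)$ says that the kernel congruence of $\nu_M$ is contained in that of $\nu_N$, so the factoring homomorphism $M \to N$ exists by the universal property of the term algebra.)

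For (ii), I would reduce to part (i) applied to $M^{|Q}$ and $N$, which are both models over $Q$. Since every duple in $Th_0^{-}(N)$ is a duple over $Q$, the hypothesis $Th_0^{-}(N) \subseteq Th_0^{-}(M)$ gives $Th_0^{-}(N) \subseteq Th_0^{-}(M) \cap (F_Q(\emptyset) \times F_Q(\emptyset))$; and because $M^{|Q}$ is the subalgebra of $M$ generated by $Q$, a duple over $Q$ holds in $M^{|Q}$ if and only if it holds in $M$, so $Th_0^{-}(M) \cap (F_Q(\emptyset) \times F_Q(\emptyset)) = Th_0^{-}(M^{|Q})$ (this identification already appears in Theorems \ref{construibleSubmodelTheorem} and \ref{freedomCommutesWithCrossing}). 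Hence $M^{|Q}$ is as free or freer than $N$, and part (i) yields a homomorphism from $M^{|Q}$ onto $N$.

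The argument is essentially bookkeeping; the only step requiring genuine care is the well-definedness of $h$, which is exactly where the freedom hypothesis enters, together with the routine observation that restriction to $Q$ does not lose duples over $Q$. I do not anticipate a serious obstacle.
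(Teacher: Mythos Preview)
Your proposal is correct and follows essentially the same approach as the paper: for (i) you use $Th_0^{+}(M) \subseteq Th_0^{+}(N)$ to factor the natural homomorphism $\nu_N$ through $\nu_M$ (the paper phrases this as partitioning $M$ by the equality relation of $N$, which is the same construction), and for (ii) you reduce to (i) via $Th_0^{-}(M^{|Q}) = Th_0^{-}(M) \cap (F_Q(\emptyset) \times F_Q(\emptyset))$, exactly as the paper does. Your explicit formulation $h(\nu_M(t)) = \nu_N(t)$ with the well-definedness check is arguably a bit cleaner than the paper's partition language.
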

\begin{proof}
(i) $M$ is as free or freer than $N$, so $Th_0^{-}(N) \subseteq Th_0^{-}(M)$ and since $M$ and $N$ are both generated by the same constants, $Th_0^{+}(M) \subseteq Th_0^{+}(N)$. Let $t$ and $s$ be terms over $C$. Suppose $N \models (t \leq s)$ and $M \models (t \not\leq s)$, this is equivalent to $N \models (t = s \odot t)$ and $M \models (t \not= s \odot t)$, i.e. each difference between $Th_0^{+}(M)$ and $Th_0^{+}(N)$ reduces to some equality satisfied by $N$ and not satisfied by $M$. Consider the partition of the universe of $M$ that puts elements $a$ and $b$ of $M$ in the same class if and only if $N \models (a = b)$. Since $Th_0^{+}(M)$ is a subset of $Th_0^{+}(N)$ the partition is isomorphic to $N$ and a map from each element of $M$ to its partition class is a homomorphism from $M$ onto $N$.   \\
(ii) In the proof of theorem \ref{construibleSubmodelTheorem} we argued that if $M$ is as free or freer than $N$ then $M^{|Q}$ is also as free or freer than $N$ with both, $M^{|Q}$ and $N$, over $Q$ so from part (i) there is a homomorphism from $M^{|Q}$ onto $N$.
\end{proof}

\bigskip
\bigskip

A congruence \cite{Burris, Papert} $\theta$ of $M$ is an equivalence relation of $M$ that commutes with the operations of the algebra. In the particular case of semilattices is an equivalence relation that commutes with the idempotent operator.  $\theta$ is a congruence if:   \[
a_1\theta b_1 \, \wedge \, a_2\theta b_2  \Rightarrow (a_1\odot a_2)\theta (b_1\odot b_2).
\]
Let $a$ and $b$ be two elements of $M$. Congruences can be built from principal congruences $\Theta(a, b)$ defined as the smallest congruence with $a$ and $b$ in the same equivalence class. It is well known \cite{Burris} that any congruence $\theta =  \cup \{ \Theta(a, b) : (a, b) \in  \theta \} =  \vee \{ \Theta(a, b) : (a, b) \in  \theta \}$ where $\cup$ is the union of equivalence classes and $\vee$ is the join operator in the lattice of equivalence classes. 

The next theorem shows that any congruence on a finite semilattice can be constructed as a sequence of full crossings. 

\bigskip
 
\begin{theorem}  \label{congruenceFromFullCrossing} 
Let $\Theta(a, b)$ be a principal congruence of an atomized semilattice $M$ over $C$ with $a, b$ elements of $M$: \\
i) $\square_{a \leq  b} M \approx M/\Theta(b, b \odot a)$,\\
ii) $M/\Theta(a, b) \approx \square_{a \leq  b} \square_{b \leq  a} M$.  
\end{theorem}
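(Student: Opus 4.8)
The plan is to reduce both identities to one observation: passing to a quotient $M/\theta$ and performing a full crossing are two descriptions of the same operation, namely forming the freest model over $C$ obtained from $M$ by adjoining extra positive duples. Theorem~\ref{fullCrossingIsFreestTheorem} already says $\square_{r}M=F_C(Th_0^{+}(M)\cup r^{+})$, so the proof amounts to showing that the relevant quotient is \emph{also} the freest model of the same positive theory, and then invoking uniqueness of the freest model of a theory over $C$ (up to isomorphism, which is what $\approx$ records). Throughout, fix terms $t_a,t_b\in F_C(\emptyset)$ with $a=\nu_M(t_a)$ and $b=\nu_M(t_b)$, so that the crossings $\square_{a\le b}M$, $\square_{b\le a}M$ are literally crossings of duples, and recall from AS3b that $b=b\odot a$ is the sentence $a\le b$ and that $a=b$ is $(a\le b)\wedge(b\le a)$.

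For (i) I would first observe that $Q:=M/\Theta(b,b\odot a)$ admits a surjective homomorphism $\pi\colon M\to Q$ fixing the constants; hence $Q\models Th_0^{+}(M)$ (homomorphisms preserve identities), and since $\pi(b)=\pi(b\odot a)$ we also get $Q\models(a\le b)^{+}$. Conversely, let $N$ be any model over $C$ with $N\models Th_0^{+}(M)\cup\{(a\le b)^{+}\}$. Then $Th_0^{+}(M)\subseteq Th_0^{+}(N)$, equivalently $Th_0^{-}(N)\subseteq Th_0^{-}(M)$ (complementarity of positive and negative theories for models over the same constants), so $M$ is as free or freer than $N$, and Theorem~\ref{homomorphismAndFreedomTheorem}(i) gives a surjection $h\colon M\to N$ compatible with the natural maps. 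Since $N\models(b=b\odot a)$, the pair $(b,b\odot a)$ lies in $\ker h$, whence $\Theta(b,b\odot a)\subseteq\ker h$ and $N$ factors as a homomorphic image of $Q$; thus $Q$ is as free or freer than $N$. Together these show $Q=F_C(Th_0^{+}(M)\cup\{(a\le b)^{+}\})$, which is $\square_{a\le b}M$ by Theorem~\ref{fullCrossingIsFreestTheorem}, proving (i).

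For (ii) I would repeat the argument with the two duples $(a\le b)^{+}$ and $(b\le a)^{+}$: exactly as above, $M/\Theta(a,b)$ is the freest model of $Th_0^{+}(M)\cup\{(a\le b)^{+},(b\le a)^{+}\}$, because a congruence collapses $a$ with $b$ iff it forces both $a\le b$ and $b\le a$. On the other side, applying Theorem~\ref{fullCrossingIsFreestTheorem} twice gives $\square_{a\le b}\square_{b\le a}M=F_C\bigl(Th_0^{+}(\square_{b\le a}M)\cup\{(a\le b)^{+}\}\bigr)$, and since $Th_0^{+}(\square_{b\le a}M)$ is precisely the set of positive duples implied by $Th_0^{+}(M)\cup\{(b\le a)^{+}\}$, this equals $F_C\bigl(Th_0^{+}(M)\cup\{(b\le a)^{+},(a\le b)^{+}\}\bigr)$; Theorem~\ref{fullCrossingIsCommutative} confirms the order is irrelevant, matching the symmetry of $\Theta(a,b)$. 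Hence $\square_{a\le b}\square_{b\le a}M\approx M/\Theta(a,b)$.

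The only step beyond bookkeeping is the correspondence between quotients of $M$ over $C$ and models $N$ over $C$ with $Th_0^{+}(M)\subseteq Th_0^{+}(N)$, and in particular the universal property that $M/\Theta(b,b\odot a)$ is the finest quotient of $M$ in which $b$ and $b\odot a$ are identified. The ``quotient $\Rightarrow$ less free'' direction is immediate; the converse is Theorem~\ref{homomorphismAndFreedomTheorem}(i); and ``finest such quotient'' is the defining property of the principal congruence $\Theta(b,b\odot a)$ from universal algebra. Once these are in place, the identification of $Q$ as the freest model of the appropriate positive theory is forced, and Theorem~\ref{fullCrossingIsFreestTheorem} closes the loop. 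I would also record the routine checks that $M/\theta$ remains generated by the classes of the constants $C$ and that the freest model of a positive theory over $C$ is unique up to isomorphism, so that the symbol $\approx$ in the statement is justified.
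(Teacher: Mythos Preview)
Your proposal is correct and follows essentially the same approach as the paper: both arguments identify the relevant quotient with the freest model $F_C(Th_0^{+}(M)\cup\{\cdots\})$ via the universal property of the principal congruence together with Theorem~\ref{homomorphismAndFreedomTheorem}, and then invoke Theorem~\ref{fullCrossingIsFreestTheorem} to recognize that freest model as the full crossing. The only cosmetic difference is in part~(ii): the paper reduces to (i) via the lattice identity $\Theta(a,b)=\Theta(b,b\odot a)\vee\Theta(a,b\odot a)$ and iterated quotients, whereas you rerun the universal-property argument directly for the pair of duples; both routes are equivalent.
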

\begin{proof}
Theorem \ref{fullCrossingIsFreestTheorem} says that $\square_{a \leq b} M$ is the freest model $F_C(Th_0^{+}(M) \cup\, \{ (a \leq b) \})$. Since $M$ is freer than $\square_{a \leq  b} M$ theorem \ref{homomorphismAndFreedomTheorem} says that we can build a homomorphism $h$ from $M$ to $N = \square_{a \leq  b} M$ mapping $h: \nu_{M}(t) \mapsto  \nu_{N}(t)$ where $\nu_{M}$ and $\nu_{N}$ are the natural homomorphisms and $t$ is a term over $C$. It follows from the Homomorphism Theorem \cite{Burris} of Universal Algebra that N is the quotient algebra $\square_{a \leq  b} M = M / ker(h)$ where $ker(h)$ is the kernel of the homomorphism.  \\
(i) It is well known \cite{Burris} that $ker(h)$ is also a congruence on $M$ and, since $\square_{a \leq  b} M \models (a \leq  b)$ and $(a \leq  b)  \Leftrightarrow (b = b \odot a)$ then $(b, b \odot a) \in ker(h)$. The principal congruence is the smallest congruence $\Theta(b, b \odot a)$ that has $(b, b \odot a)$, then $\Theta(b, b \odot a) \subseteq ker(h)$. On the other hand, $M/\Theta(b, b \odot a) \models  Th_0^{+}(M) \cup\, \{ (b = b \odot a) \} $. Suppose $\Theta(b, b \odot a)$ is strictly smaller than $ker(h)$, then $M/\Theta(b, b \odot a)$ is strictly freer than $\square_{a < b} M$, which contradicts theorem \ref{fullCrossingIsFreestTheorem}. Therefore,   $\Theta(b, b \odot a) = ker(h)$ and then $\square_{a \leq  b} M \approx M/\Theta(b, b \odot a)$. \\
(ii) Since $\Theta(a, b) = \Theta(b, b \odot a) \vee \Theta(a, b \odot a)$ where $\vee$ is the join operation in the lattice of equivalence classes, then $M/\Theta(a, b) \approx (M/\Theta(b, b \odot a)) /\Theta(a, b \odot a)$ and applying (i) we can write $M/\Theta(a, b) \approx \square_{a \leq  b} \square_{b \leq  a} M$.
\end{proof}

\bigskip
\bigskip

Let $M$ and $N$ be two semilattice models over the constants $C_M = \{a_1, a_2,...,a_m \}$ and $C_N  = \{b_1, b_2,...,b_n \}$ respectively, with $C_M \cap C_N = \emptyset$. We have assumed $M$ and $N$ are both generated by their constants. Consider the freest algebra $M \oplus N$ generated by the constants $C_M \cup C_N$ such that the homomorphisms $h_M$ and $h_N$ from $M$ and from $N$ to $M \oplus N$ respectively defined by $h_M(a_i) = a_i$ and $h_N(b_j) = b_j$ are one-to-one, i.e. they are embeddings. We refer to $M \oplus N$ as the ``join model'' of $M$ and $N$. 

Since we use the same names for the constants in $M$ and in $M \oplus N$ then $h_M$ is the identity and so it is $h_N$. The join model $M \oplus N$ is the closure by idempotent summations of a set formed by the elements of $M$ and the elements of $N$. Since the operator $\odot$ is commutative and associative the elements of $M \oplus N$ have the forms: $x$ or $y$ or $x \odot y$, where $x$ is an element of $M$ and $y$ an element of $N$.  

The the natural homomorphism $\nu$ from $F_{C_M \cup C_N}(\emptyset)$ onto $M \oplus N$ given by:
 \[
\nu(t) = \nu_M(t),  \,\,\,\,\,\,\, \nu(s) = \nu_N(s),  \,\,\,\,\,\,\, \nu(t \odot s) = \nu_M(t) \odot \nu_N(s).
 \]
 where $\nu_M$ and $\nu_N$ are the natural homomorphism onto $M$ and $N$ respectively and $t$ and $s$ are terms of $F_{C_M}(\emptyset)$ and $F_{C_N}(\emptyset)$ respectively.

\bigskip
\bigskip
 
\begin{theorem}  \label{sumOfModelsWhenNoIntersection} 
Let $M$ and $N$ be two atomized semilattice models over the constants $C_M$ and $C_N$ respectively, such $C_M \cap C_N = \emptyset$ and $M = [A]$ and $N = [B]$ where $A$ and $B$ are sets of atoms. The model $M \oplus N$ is atomized by $A \cup B$, i.e. $M \oplus N  \approx  M + N$.
\end{theorem}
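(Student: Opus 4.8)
The plan is to identify the model $M+N = [A\cup B]$ with $M\oplus N$ by showing that $[A\cup B]$ has exactly the property that defines $M\oplus N$: it is a semilattice over $C := C_M\cup C_N$ into which $M$ and $N$ embed through the constant-identity maps, and it is the freest such model.

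Write every term $t$ over $C$ as $t = t_M\odot t_N$, where $t_M$ collects the constants of $t$ lying in $C_M$ and $t_N$ those in $C_N$ (when either set is empty, the corresponding part is simply absent). Since ${\bf U}^c(\phi)\subseteq C_M$ for each $\phi\in A$, ${\bf U}^c(\psi)\subseteq C_N$ for each $\psi\in B$, and $C_M\cap C_N=\emptyset$, axiom AS1 forces $A$ and $B$ to be disjoint, and an atom $\phi\in A$ discriminates a duple $(t,s)$ in $[A\cup B]$ exactly when it discriminates $(t_M,s_M)$ in $M$, while an atom of $B$ discriminates $(t,s)$ exactly when it discriminates $(t_N,s_N)$ in $N$ (phrasing this through discriminants avoids invoking a nonexistent ``empty element'' of $M$ or $N$). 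Combining this with theorem \ref{atomicSegmentFromTermTheorem}(iv)--(vi) and axiom AS3 gives, for all terms $t,s$ over $C$,
\begin{equation*}
[A\cup B]\models(t\leq s)\ \Longleftrightarrow\ \bigl(M\models t_M\leq s_M\bigr)\ \wedge\ \bigl(N\models t_N\leq s_N\bigr).\tag{$\star$}
\end{equation*}
One also checks, using that every constant of $C_M$ has an atom of $A$ below it and likewise for $C_N$ and $B$, that $[A\cup B]$ satisfies AS6, so $A\cup B$ genuinely spawns an atomized semilattice (theorem \ref{atomizationsAreModels}).

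Specialising $(\star)$ to terms over $C_M$ only yields $[A\cup B]\models(t\leq s)\Leftrightarrow M\models(t\leq s)$; hence the natural homomorphism of $[A\cup B]$ restricted to $F_{C_M}(\emptyset)$ has the same kernel as $\nu_M$ and factors through it, giving a constant-identity embedding $h_M\colon M\hookrightarrow[A\cup B]$ (with image the subalgebra $[A\cup B]^{|C_M}$); symmetrically one gets $h_N\colon N\hookrightarrow[A\cup B]$. Thus $[A\cup B]$ is one of the semilattices over $C$ into which $M$ and $N$ embed by the constant-identity maps. Now let $P$ be any such semilattice, with embeddings $g_M\colon M\hookrightarrow P$ and $g_N\colon N\hookrightarrow P$. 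If $[A\cup B]\models(t\leq s)$, then by $(\star)$ we have $M\models t_M\leq s_M$ and $N\models t_N\leq s_N$; applying $g_M$ and $g_N$ and using monotonicity of $\odot$ gives $P\models(t_M\odot t_N)\leq(s_M\odot s_N)$, that is $P\models(t\leq s)$. Hence $Th_0^{+}([A\cup B])\subseteq Th_0^{+}(P)$, so $[A\cup B]$ is as free as or freer than $P$.

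Taking $P = M\oplus N$ shows $[A\cup B]$ is as free as or freer than $M\oplus N$, while the defining maximality of $M\oplus N$ among models with the embedding property (which $[A\cup B]$ possesses) shows $M\oplus N$ is as free as or freer than $[A\cup B]$. Two semilattice models over the same set of constants that are each as free as the other have equal positive theories, hence induce the same partition of $F_C(\emptyset)$ and are isomorphic; therefore $M\oplus N\approx[A\cup B] = M+N$. The steps I expect to demand the most care are establishing $(\star)$ cleanly — handling the absent-part terms via discriminants rather than fictitious elements, and verifying both the disjointness of $A$ and $B$ and the validity of AS6 for $[A\cup B]$ — and, at the closing step, making explicit that ``the freest model with the embedding property'' is determined up to isomorphism, so that the two freeness comparisons collapse to the asserted isomorphism.
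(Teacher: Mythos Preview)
Your proof is correct and takes essentially the same approach as the paper: both establish that the order in $[A\cup B]$ factors as the conjunction of the orders in $M$ and $N$ (your $(\star)$ is the paper's ``definition statements'' written in compact form), and then identify $[A\cup B]$ with $M\oplus N$. Your closing freeness comparison is slightly more explicit than the paper's, which simply asserts that the definition statements fully characterize $M\oplus N$ and concludes.
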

\begin{proof}
When  $C_M \cap C_N = \emptyset$ the join model satisfies $M \oplus N  \models (x \not\leq y)  \wedge  (y \not\leq x)$ for every element $x$ of $M$ and every element $y$ of $N$. In addition, if $x, x'$ are elements of $M$ and $y$ is an element of $N$ then $M \oplus N  \models (x' \leq x \odot y)$ if and only if $M \oplus N  \models (x' \leq x)$ if and only if  $M \models (x' \leq x)$. Similarly, if $y , y'$ are elements of $N$ then $M \oplus N  \models (y' \leq x \odot y)$ if and only if $M \oplus N  \models (y' \leq y)$ if and only if $N \models (y' \leq y)$. These properties fully characterize $M \oplus N$ so let us use them as ``definition statements'' for the model $M \oplus N$.  \\
We are using the same names for the constants in $F_{C_M \cup C_N}(\emptyset)$, in $M$, in $N$ and in $M \oplus N$. Thanks to this choice we can use the terms of $F_{C_M \cup C_N}(\emptyset)$ as names for elements of $M \oplus N$, of $M$ and also of $N$. Of course, the definition statements above also hold when $x$ and $x'$ are terms of $F_{C_M}(\emptyset)$ and when $y$ and $y'$ are terms of $F_{C_N}(\emptyset)$ (instead of elements of $M$ and $N$). \\
Consider now the model $[A \cup B]$ resulting from the union of the atoms of $M$ and $N$. Since $C_M \cap C_N = \emptyset$ for every term $t$ of $F_{C_M}(\emptyset)$ the lower atomic segments are equal: ${\bf{L}}^a_{[A \cup B]}(t) = {\bf{L}}^a_{M}(t)$. If $t$ and $s$ are terms of $F_{C_M}(\emptyset)$ such that $M \models (t = s)$ then ${\bf{L}}^a_{M}(t) = {\bf{L}}^a_{M}(s)$ which implies ${\bf{L}}^a_{[A \cup B]}(t) = {\bf{L}}^a_{[A \cup B]}(s)$ and then $[A \cup B] \models (t = s)$. It follows that  $M \models (t = s)$ if and only if $[A \cup B] \models (t = s)$, so $M$ can be embedded into $[A \cup B]$ (and so $N$ does). \\
We are going to show that $[A \cup B]$ also satisfies the definition statements. Since the lower atomic segment in $[A \cup B]$ of the terms $x$ and $y$ are non-empty and have null intersection, i.e. ${\bf{L}}^a_{[A \cup B]}(x) \cap  {\bf{L}}^a_{[A \cup B]}(y) = \emptyset$, it follows that $[A \cup B]  \models (x \not\leq y) \wedge (y \not\leq x)$. Since ${\bf{L}}^a_{[A \cup B]}(x \odot y)  = {\bf{L}}^a_{[A \cup B]}(x) \cup  {\bf{L}}^a_{[A \cup B]}(y)$ and ${\bf{L}}^a_{[A \cup B]}(x') \cap  {\bf{L}}^a_{[A \cup B]}(y) = \emptyset $ then $[A \cup B]  \models (x' \leq x \odot y)$ if and only if  $[A \cup B]  \models (x' \leq x)$ if and only if $M \models (x' \leq x)$.  \\
The model atomized by $A \cup B$, i.e. $M + N$, satisfies the definition statements, hence, $M + N$ and $M \oplus N$ are the same model.
\end{proof}

\bigskip

We are now in conditions to define the join model $M \oplus N$ in the case $C_M \cap C_N  \not= \emptyset$.

Let $M$ and $N$ be two semilattices over $C$ such that $C_M \cap C_N  \not= \emptyset$. Rename the constants in the set $C_M \cap C_N$ to obtain disjoint sets $C_M$ and $C'_N$, i.e. $C_M \cap C'_N  = \emptyset$, and model $N' \approx N$. Then, let $M \oplus N \approx (M \oplus N')/\theta$, where $\theta$ is the congruence $\theta = \vee \{ \Theta(c, c') : c \in  C_M \cap C_N \}$.

To be explicit regarding the change of names, a convenient notation is the rename operator $[\tfrac{c'}{c}]$, an operator that replaces the constant $c$ by $c'$. It maps a model with the constant $c$ to the same model with the constant $c$ replaced by $c'$. The rename operator acts over the freest model as, $[\tfrac{c'}{c}]F_{a,..,c}(\emptyset) \mapsto F_{a,..,c'}(\emptyset)$, commutes with all the natural homomorphisms and operates over an atom by replacing $c$ by $c'$ in its upper segment. We also define $[\tfrac{}{c}] \phi$ as an operation that removes $c$ from the upper segment of $\phi$. If the upper segment of $\phi$ becomes empty then $[\tfrac{}{c}]$ annihilates the atom $\phi$.

By using the rename operator we can define the join model as follows. Let $\{c_1,c_2,.., c_h\} = C_M \cap C_N$;
\[
M \oplus N = [\tfrac{c_1}{c'_1}...\tfrac{c_h}{c'_h}] (M \oplus [\tfrac{c'_1}{c_1}...\tfrac{c'_h}{c_h}]N)/\theta
\]

\bigskip

\begin{theorem}  \label{sumOfModelsWithIntersection} 
Let $M$ and $N$ be two atomized semilattices over the constants $C_M$ and $C_N$ respectively, with $C_M \cap C_N \not= \emptyset$.  Rename the constants in the set $\{c_1,c_2,.., c_h\}  = C_M \cap C_N$ to obtain disjoint sets $C_M$ and $C'_N = [\tfrac{c'_1}{c_1}...\tfrac{c'_h}{c_h}]C_N$ and let the renamed model $N' = [\tfrac{c'_1}{c_1}...\tfrac{c'_h}{c_h}]N$. \\ 
i) $M \oplus N$ satisfies $M \oplus N  \approx \square_E (M + N')\,$ where $\,\square_E = \square_{Ec_1}\square_{Ec_2}...\square_{Ec_h}$ and $\,\square_{Ec} = \square_{c  \leq c'} \square_{c'  \leq c}$. \\ 
ii) $M \oplus N  = [|(C_M \cup C_N)] \, \square_E (M + N')$. \\ 
iii) The atoms of $M \oplus N$ are atoms of $M + N$ or redundant with $M + N$.  \\ 
iv) $M + N$ is freer or as free than $M \oplus N$. \\ 
v) There is a homomorphism from $M + N$ onto $M \oplus N$. \\ 
vi) $M \oplus N = \square_{r^{+}_1}\square_{r^{+}_2}....\square_{r^{+}_k} (M + N)$ for some duples $\{r^{+}_1, r^{+}_2,.., r^{+}_k\}$ over $C_M \cup C_N$.
\end{theorem}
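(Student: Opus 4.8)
The plan is to prove (i) first and obtain (ii)--(vi) from it almost mechanically. For (i), since $C_M \cap C'_N = \emptyset$, Theorem~\ref{sumOfModelsWhenNoIntersection} gives $M \oplus N' \approx M + N'$, so by definition $M \oplus N$ is obtained from $M + N'$ by quotienting by $\theta = \bigvee_{i=1}^{h}\Theta(c_i, c'_i)$ and relabelling each $c'_i$ back to $c_i$. I would realize this quotient as a sequence of full crossings: by Theorem~\ref{congruenceFromFullCrossing}(ii), $(M+N')/\Theta(c_1,c'_1) \approx \square_{c_1 \leq c'_1}\square_{c'_1 \leq c_1}(M+N') = \square_{Ec_1}(M+N')$; quotienting that model by the principal congruence generated by the image of $(c_2,c'_2)$ lifts on $M+N'$ to $\Theta(c_1,c'_1)\vee\Theta(c_2,c'_2)$ and equals $\square_{Ec_2}\square_{Ec_1}(M+N')$; iterating all $h$ steps, the composite quotient has kernel exactly $\theta$, so $(M+N')/\theta \approx \square_E(M+N')$, the order of the $\square_{Ec_i}$ being immaterial by Theorem~\ref{fullCrossingIsCommutative}. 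Since the relabelling in the definition only merges $c'_i$ with $c_i$, which already name the same element in $\square_E(M+N')$, this proves $M \oplus N \approx \square_E(M+N')$.

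For (ii), in $\square_E(M+N')$ the constants $c_i$ and $c'_i$ name the same regular element, so $c_i \in {\bf{U}}^{c}(\phi) \Leftrightarrow c'_i \in {\bf{U}}^{c}(\phi)$ for every atom $\phi$; as $C_M \cup C_N \subseteq C_M \cup C'_N$ with the $c'_i$ the only surplus constants, restricting to $C_M \cup C_N$ discards exactly these names without losing information and coincides with the relabelling used to define $M \oplus N$, upgrading (i) to the equality $M \oplus N = [|(C_M\cup C_N)]\,\square_E(M+N')$. For (iii), Definition~\ref{fullCrossing} shows every atom produced by a full crossing has the form $\lambda \bigtriangledown \rho$ with $\lambda,\rho$ already present, so by Theorem~\ref{redundantIsUnionOfAtomsTheorem} each atom of $\square_E(M+N')$ is an atom of $M+N'$ or a union of several of them, i.e. redundant with $M+N'$. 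Relabelling $c'_i \mapsto c_i$ (equivalently, restricting to $C_M\cup C_N$) sends atoms of $N'$ to atoms of $N$, fixes atoms of $M$, and turns a union of atoms of $M$ and $N'$ into a union of atoms of $M$ and $N$; again by Theorem~\ref{redundantIsUnionOfAtomsTheorem} the latter is an atom of $M+N$ or redundant with $M+N$, so every atom of $M\oplus N$ is an atom of $M+N$ or redundant with $M+N$.

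Parts (iv)--(vi) then follow from the machinery of Theorem~\ref{construibleSubmodelTheorem} applied with $Q = C = C_M\cup C_N$, for which (iii) is exactly its condition (iv) (``$M\oplus N$ is a subset model of $M+N$''). Its implication $(iv)\Rightarrow(i)$, which uses Theorem~\ref{redundantAtom} (redundant atoms discriminate no duples not already discriminated), gives $Th_0^{-}(M\oplus N)\subseteq Th_0^{-}(M+N)$, proving (iv); Theorem~\ref{homomorphismAndFreedomTheorem}(i) then produces the homomorphism from $M+N$ onto $M\oplus N$ of (v); and the implication $(iv)\Rightarrow(ii)$ yields duples $r_1,\dots,r_k$ over $C_M\cup C_N$ with $(M\oplus N)+[\ominus_c] = \square_{r_1}\cdots\square_{r_k}(M+N)$, which is (vi) once we note, via Theorem~\ref{ceroTheorem}(i), that $\ominus_c$ lies in every model so adjoining it does not change the semilattice.

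The step I expect to be the main obstacle is the rename/restriction bookkeeping: one must verify that iterating quotients by principal congruences produces precisely the join congruence $\theta$ and not a larger one, and that once $c_i$ and $c'_i$ have been identified the relabelling $[\tfrac{c_i}{c'_i}]$ literally is the restriction $[|(C_M\cup C_N)]$ --- this is what converts the ``$\approx$'' of (i) into the ``$=$'' of (ii) and lets Theorem~\ref{construibleSubmodelTheorem} be applied without modification.
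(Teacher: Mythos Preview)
Your proposal is correct and follows essentially the same route as the paper: realize the defining congruence $\theta$ as the sequence of full crossings $\square_E$ via Theorem~\ref{congruenceFromFullCrossing}, use the $c_i \leftrightarrow c'_i$ symmetry of atoms in $\square_E(M+N')$ to identify the relabelling with the restriction $[|(C_M\cup C_N)]$, and then read off (iv)--(vi) from the subset-model criterion. Your use of Theorem~\ref{construibleSubmodelTheorem} as a package for (iv)--(vi) is slightly more streamlined than the paper, which rederives (vi) explicitly by taking $R^{+}=Th_0^{+}(M\oplus N)-Th_0^{+}(M+N)$, but the content is identical.
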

\begin{proof}
(i) It follows directly from the definition of the join model and theorem \ref{congruenceFromFullCrossing} to transform congruences into full crossing operations: $M \oplus N  =  [\tfrac{c_1}{c'_1}...\tfrac{c_h}{c'_h}]  \square_E (M + N')$, and then $M \oplus N$ is a rename of $\square_E (M + N')$, so both models are isomorphic. \\ 
(ii) $\square_E$ equates each pair of constants $c_i = c'_i$ for $1 \leq i \leq h$. Since $\,\square_E (M + N') \models \forall_i(c_i = c'_i)$ the atoms in the lower segments of $c_i$ and $c'_i$ are the same, and then every atom with $c_i$ in its upper constant segment also has $c'_i$ and vice versa. Therefore, $\square_E (M + N')$ is the model $[\tfrac{c_1 c'_1}{c_1}...\tfrac{c_h c'_h}{c_h}](M \oplus N)$.  
Let $A = [\tfrac{c_1 c'_1}{c_1}...\tfrac{c_h c'_h}{c_h}](M \oplus N) = \square_E (M + N')$. From theorem \ref{restrictionLemma}, the restriction to $C_M \cup C_N$ of $A$ recovers $M \oplus N$ (as the restriction simply acts by removing the primed constants from the upper segments of the atoms) and we can write this as $A^{|(C_M \cup C_N)} = M \oplus N$ or in the form of an operator, $[|(C_M \cup C_N)] A = M \oplus N$.  \\ 
(iii) Since $A = \square_E (M + N')$ is the result of a crossing operation over $M + N'$ then $M + N'$ is freer than $A$. This can be written as $A \subset M + N'$, where the inclusion signifies that the atoms of $A$ are atoms of $M + N'$ or redundant with $M + N'$. Let $\phi$ be an atom of $A$. We argued above that if $c_i \in C_M \cap C_N$ then either both constants $c_i, c'_i \in U^{c}(\phi)$, or $c_i \not\in U^{c}(\phi)$ and $c'_i \not\in U^{c}(\phi)$. This implies that if $\phi$ is a union of an atom in $M$ and an atom in $N'$ then $\phi^{|(C_M \cup C_N)}$ is a union of an atom in $M$ and an atom in $N$. We have in $A$ two kinds of atoms. Atoms that are unions of atoms of $M$ and $N$ that satisfy $\phi^{|(C_M \cup C_N)} \in M + N$ and atoms that are either in $M$ or in $N$ that contain no constant in the set $C_M \cap C_N$ and satisfy $\phi^{|(C_M \cup C_N)} = \phi \in M + N$. It follows that every atom of $A^{|(C_M \cup C_N)} = M \oplus N$ is either an atom of $M + N$ or a union of atoms of $M + N$. We can also write this as $M \oplus N \subset M + N$.   \\ 
(iv) Proposition iii implies that $M \oplus N$ is a subset model of $M + N$ so, using theorem \ref{construibleSubmodelTheorem}(iv) we get that $M + N$ is freer or as free as $M \oplus N$.   \\ 
(v)  Directly from part (iv) and theorem \ref{homomorphismAndFreedomTheorem}.   \\ 
(vi). From part iv and the fact that $M + N$ and $M \oplus N$ are both models over the same constants $C_M \cup C_N$, follows that $Th_0^{+}(M + N)  \subseteq  Th_0^{+}(M \oplus N)$ so there is some set $R^{+}$ of duples over $C_M \cup C_N$, perhaps empty, such that $Th_0^{+}(M \oplus N) = F_{C_M \cup C_N}(Th_0^{+}(M + N) \cup R^{+})$, for example, the set $R^{+} = Th_0^{+}(M \oplus N)  - Th_0^{+}(M + N)$. Let's enumerate the set $R^{+} = \{r^{+}_1, r^{+}_2,.., r^{+}_k\}$. We can use theorem \ref{fullCrossingIsFreestTheorem} to build the freest model that satisfies $Th_0^{+}(M + N) \cup R^{+}$ as a series of full-crossings $M \oplus N = \square_{r^{+}_1}\square_{r^{+}_2}....\square_{r^{+}_k} (M + N)$.
\end{proof}

\bigskip
\bigskip

We have shown that $M + N$ is freer but not equal to $M \oplus N$. How do they look? 

\bigskip

\begin{example}   Let $M$ be a semilattice model of an algebra with constants $C_M = \{a, b, c\}$ and $N$ a model with constants $C_N = \{c,d, e\}$. Assume that $M = [ \phi_{c}, \phi_{a, b, c} ]$ and $N = [ \phi_{c, d, e} ]$, where we are using the same notation for atoms than in the example \ref{simplecrossingExample}.  \\ 
It easily follows that: \[
M \models (a = b < c)     \, \, \, \, \, \, \, \, \, \, \, N \models (c = d = e). 
\]
The union model $M + N$ is equal $[\phi_{c}, \phi_{a, b, c}, \phi_{c, d, e}]$ and is a model that satisfies: \[
M + N \models (a = b < c > d = e). 
\]
The join model $M \oplus N$ can be obtained by calculating $[|\{a, b, c, d, e\}] \, \square_{c  \leq c'} \square_{c'  \leq c} (M + [\tfrac{c'}{c}] N)$. \\
Step by step: 
$M + [\tfrac{c'}{c}] N = [\phi_{c}, \phi_{a, b, c}, \phi_{c', d, e}]$ and then $\square_{c'  \leq c} [\phi_{c}, \phi_{a, b, c}, \phi_{c', d, e}] = [\phi_{c, c', d, e}, \phi_{a, b, c, c', d, e}]$ and $\square_{c  \leq c'} [\phi_{c, c', d, e}, \phi_{a, b, c, c', d, e}] = [\phi_{c, c', d, e}, \phi_{a, b, c, c', d, e}]$. Finally, the join model is given by the restriction $[\phi_{c, c', d, e}, \phi_{a, b, c, c', d, e}]^{|\{a, b, c, d, e\}}$, which yields $M \oplus N = [\phi_{c, d, e}, \phi_{a, b, c, d, e}]$, a model that satisfies:  \[
M \oplus N  \models (a = b < c = d = e),
\]
and contains embeddings of both $M$ and $N$. It is also clear that $M + N$ is freer than $M \oplus N$ and, in fact, (see theorem \ref{sumOfModelsWithIntersection} part vi) it can be obtained as: $M \oplus N = \square_{c  \leq d} (M + N)$.
\end{example}

\bigskip
\bigskip

In the example above we saw that both $M$ and $N$ could be embedded in $M \oplus N$ but not in $M + N$. The next theorem explains when embeddings in the join model are possible.

\bigskip

\begin{theorem}  \label{embeddingInJoinModelTheorem}  
Let $M$ and $N$ be two semilattice models over the constants $C_M$ and $C_N$ respectively.   \\ 
i) $M \oplus N = F_{C_M \cup C_N}(Th_0^{+}(M) \cup Th_0^{+}(N))$.  \\ 
ii) Either $Th^{+}(M^{|C_N}) \not\subseteq Th^{+}(N)$ or there is an embedding from $N$ into $M \oplus N$ with image $(M \oplus N)^{|C_N} = N$. \\ 
iii) Either there is a duple $r$ over $C_N$ such that $M \models r^{+}$ and $N \models r^{-}$ or there is an embedding from $N$ into $M \oplus N$ with image $(M \oplus N)^{|C_N} = N$. \\ 
iv) If $C_M \cap C_N = \emptyset$ there are embeddings from $M$ and from $N$ into $M \oplus N$.
\end{theorem}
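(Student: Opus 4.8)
The plan is to prove (i) and then read off (ii)--(iv) from it.

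\textbf{Part (i).} I would unfold the definition of the join model through the disjoint rename $N'=[\tfrac{c'_1}{c_1}\cdots\tfrac{c'_h}{c_h}]N$, so that $C_M\cap C'_N=\emptyset$. In the disjoint case one first shows $M+N'=F_{C_M\cup C'_N}(Th_0^{+}(M)\cup Th_0^{+}(N'))$: writing $M+N'=[A\cup B']$ and splitting any term over $C_M\cup C'_N$ as $t_M\odot t_{N'}$, the linearity of theorem \ref{atomicSegmentFromTermTheorem}(v) gives ${\bf{L}}^{a}_{M+N'}(t)={\bf{L}}^{a}_{M}(t_M)\cup{\bf{L}}^{a}_{N'}(t_{N'})$, so $M+N'\models(t\le s)$ iff $M\models(t_M\le s_M)$ and $N'\models(t_{N'}\le s_{N'})$ --- exactly the closure of $Th_0^{+}(M)\cup Th_0^{+}(N')$ under monotonicity of $\odot$. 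Combined with theorem \ref{sumOfModelsWhenNoIntersection} this gives $M\oplus N'=M+N'=F_{C_M\cup C'_N}(Th_0^{+}(M)\cup Th_0^{+}(N'))$. Next, by theorem \ref{congruenceFromFullCrossing} the quotient by $\theta=\vee\{\Theta(c_i,c'_i)\}$ is the iterated full crossing of the duples $(c_i\le c'_i),(c'_i\le c_i)$, so theorem \ref{fullCrossingIsFreestTheorem} (with commutativity, theorem \ref{fullCrossingIsCommutative}) turns the result into $F_{C_M\cup C'_N}(Th_0^{+}(M)\cup Th_0^{+}(N')\cup\{c_i\le c'_i,\ c'_i\le c_i\})$. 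Finally the rename $[\tfrac{c_i}{c'_i}]$ commutes with all natural homomorphisms, hence with the freest-model construction; it sends $N'$ back to $N$ and each pair $\{c_i\le c'_i,\ c'_i\le c_i\}$ to the trivial $c_i\le c_i$, leaving $M\oplus N=F_{C_M\cup C_N}(Th_0^{+}(M)\cup Th_0^{+}(N))$.

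\textbf{Parts (ii) and (iii).} Assume $Th_0^{+}(M^{|C_N})\subseteq Th_0^{+}(N)$. By (i), $M\oplus N$ satisfies $Th_0^{+}(N)$, a set of duples over $C_N$, so $(M\oplus N)^{|C_N}\models Th_0^{+}(N)$. For the reverse containment I would use the distributivity of restriction over the join, $(M\oplus N)^{|C_N}=M^{|C_N}\oplus N$ (since $N^{|C_N}=N$), and then apply (i) to the pair $M^{|C_N},N$ (whose constant sets have union $C_N$ and intersection $C_M\cap C_N$), obtaining $M^{|C_N}\oplus N=F_{C_N}(Th_0^{+}(M^{|C_N})\cup Th_0^{+}(N))=F_{C_N}(Th_0^{+}(N))=N$. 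Hence $(M\oplus N)^{|C_N}=N$, and since $(M\oplus N)^{|C_N}$ is by definition a subalgebra of $M\oplus N$, its inclusion is an embedding of $N$ with image $(M\oplus N)^{|C_N}=N$. Part (iii) is the same statement reworded: the failure of $Th_0^{+}(M^{|C_N})\subseteq Th_0^{+}(N)$ is exactly the existence of a duple $r$ over $C_M\cap C_N\subseteq C_N$ with $M^{|C_N}\models r^{+}$ and $N\models r^{-}$, and $M^{|C_N}\models r^{+}\Leftrightarrow M\models r^{+}$ because a subalgebra agrees with the ambient algebra on duples over its own generators.

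\textbf{Part (iv).} If $C_M\cap C_N=\emptyset$, then $M^{|C_N}$ is generated by the empty set, so $Th_0^{+}(M^{|C_N})$ contains only trivial duples and is contained in $Th_0^{+}(N)$; part (ii) then gives the embedding of $N$, and by symmetry the embedding of $M$. (Alternatively this is immediate from theorem \ref{sumOfModelsWhenNoIntersection}, which already exhibits $M\oplus N\approx M+N$ with both embeddings.)

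\textbf{Main obstacle.} The delicate point is the distributivity $(M\oplus N)^{|C_N}=M^{|C_N}\oplus N$ used in (ii): it requires commuting the restriction operator $[|C_N]$ with the rename, with the union of atomizations, and with the congruence-crossings $\square_E$ appearing in the definition of the join, and theorem \ref{restrictionAndCrossingCommute} is stated only for duples lying over the restricting set, whereas $\square_E$ involves the auxiliary renamed constants $c'_i$. I would settle it by a routine but careful extension of theorem \ref{restrictionAndCrossingCommute} to arbitrary duples --- namely $[|Q]\,\square_{(t,s)}P=\square_{(t^{|Q},s^{|Q})}[|Q]P$, read with the convention that crossing by a duple whose restricted term degenerates acts as the identity --- together with the straightforward fact, checked directly on upper constant segments, that $[|C_N]$ distributes over unions of atom sets. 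Once that bookkeeping is in place, everything else is assembled from results already in the paper.
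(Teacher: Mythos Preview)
Your argument for part (i) is correct and follows essentially the same route as the paper's proof: unfold the join via the disjoint rename, identify $M+N'$ with the freest model of $Th_0^{+}(M)\cup Th_0^{+}(N')$, add the identification duples $E$ via full crossing, and then collapse the primed constants. Parts (iii) and (iv) are also fine, once (ii) is established.

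The gap is in part (ii). The distributivity $(M\oplus N)^{|C_N}=M^{|C_N}\oplus N$ that you want to use is in fact true, but your proposed route to it---the extension of theorem \ref{restrictionAndCrossingCommute} to arbitrary duples, $[|Q]\,\square_{(t,s)}P=\square_{(t^{|Q},s^{|Q})}[|Q]P$---is \emph{false} in general. A small counterexample: take $C=\{a,b,c\}$, $Q=\{a,c\}$, $P=F_C(\emptyset)=[\phi_a,\phi_b,\phi_c]$, and the duple $(a,\,b\odot c)$. Then $\square_{a\le b\odot c}P=[\phi_b,\phi_c,\phi_{a,b},\phi_{a,c}]$, whose restriction to $Q$ is $[\phi_a,\phi_c]=F_Q(\emptyset)$ (still free). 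But $[|Q]P=[\phi_a,\phi_c]$ and $(b\odot c)^{|Q}=c$, so $\square_{a\le c}[|Q]P=[\phi_c,\phi_{a,c}]$, a model in which $a\le c$. The two sides disagree. The problem is that restricting $s$ to $Q$ discards exactly the witnesses (here the constant $b$) that, after crossing, would have kept the restricted atom $\phi_a$ alive.

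The paper sidesteps this entirely: it writes $M\oplus N=\square_{Th_0^{+}(N)}\,F_{C_M\cup C_N}(Th_0^{+}(M))$ by ordering the crossings so that the $N$-duples come last, and since those duples \emph{are} over $C_N$, theorem \ref{restrictionAndCrossingCommute} applies as stated. The restriction of $F_{C_M\cup C_N}(Th_0^{+}(M))$ to $C_N$ is then computed directly as $M^{|C_N}+F_{C_N-C_M}(\emptyset)=\square_{Th_0^{+}(M^{|C_N})}F_{C_N}(\emptyset)$, giving $(M\oplus N)^{|C_N}=F_{C_N}(Th_0^{+}(M^{|C_N})\cup Th_0^{+}(N))$, which under your hypothesis collapses to $N$. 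So the key idea you are missing is not a generalisation of the commutation lemma but rather a judicious ordering of the full crossings so that only duples over the restricting set need to be commuted past the restriction.
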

\begin{proof}
(i) From theorem \ref{sumOfModelsWithIntersection}ii we know  $M \oplus N  = [|(C_M \cup C_N)] \, \square_E (M + N')$ where $N' = [\tfrac{c'_1}{c_1}...\tfrac{c'_h}{c_h}]N$. Using theorem \ref{fullCrossingIsFreestTheorem} we can write $\square_E (M + N') = F_{C_M \cup C_N}(Th_0^{+}(M + N')  \cup E) $ with $E = \cup_{\forall i \in C_M \cap C_N}  \{ (c_i  \leq c'_i) \wedge (c'_i  \leq c_i)  \}$. Since the constants of $M$ and $N'$ are disjoint, it is not difficult to see that $Th_0^{+}(M + N') \Leftrightarrow Th_0^{+}(M)  \cup Th_0^{+}(N')$. It follows that $\square_E (M + N') = F_{C_M \cup C_N}(Th_0^{+}(M)  \cup Th_0^{+}(N')  \cup E)$.
 It is also clear that $Th_0^{+}(N')  \cup E \Leftrightarrow Th_0^{+}(N) \cup E$ and then $\square_E (M + N') = F_{C_M \cup C_N}(Th_0^{+}(M)  \cup Th_0^{+}(N)  \cup E)$, where  primed constants $c'_i$ appear only in $E$ and its restriction to $C_M \cup C_N$ is equal to $F_{C_M \cup C_N}(Th_0^{+}(M)  \cup Th_0^{+}(N))$.  \\ 
(ii) We can construct $M \oplus N = F_{C_M \cup C_N}(Th_0^{+}(M) \cup Th_0^{+}(N))$ with a series of full crossings $\square_{Th_0^{+}(N)} F_{C_M \cup C_N}(Th_0^{+}(M))$ where $\square_{Th_0^{+}(N)}$ is a sequence with a full crossing for each duple in $Th_0^{+}(N)$. Consider the restriction: $(M \oplus N)^{|C_N} = [|C_N] \square_{Th_0^{+}(N)}  F_{C_M \cup C_N}(Th_0^{+}(M))$. Since each duple in $Th_0^{+}(N)$ is over the constants $C_N$, restriction and full crossing commute (see theorem \ref{restrictionAndCrossingCommute}) so we can write $(M \oplus N)^{|C_N} = \square_{Th_0^{+}(N)}  [|C_N]  F_{C_M \cup C_N}(Th_0^{+}(M))$. Keeping theorem $\ref{freestModelTheorem}$ in mind it is easy to see that $F_{C_M \cup C_N}(Th_0^{+}(M)) = M +  F_{C_N - C_M}(\emptyset)$. Substituting above, we get $(M \oplus N)^{|C_N} = \square_{Th_0^{+}(N)} \, [|C_N] \, (M +  F_{C_N - C_M}(\emptyset)) =  \square_{Th_0^{+}(N)} \, (M^{|C_N} +  F_{C_N - C_M}(\emptyset))$. Considering that $M^{|C_N} +  F_{C_N - C_M}(\emptyset) = \square_{Th^{+}(M^{|C_N})} F_{C_N}(\emptyset)$ follows $(M \oplus N)^{|C_N} =  \square_{Th_0^{+}(N)} \, \square_{Th^{+}(M^{|C_N})} F_{C_N}(\emptyset)$. \\
Suppose $Th^{+}(M^{|C_N}) \subseteq Th^{+}(N)$. It is clear, then, that   $\square_{Th_0^{+}(N)} \, \square_{Th^{+}(M^{|C_N})} = \square_{Th_0^{+}(N)}$ and $(M \oplus N)^{|C_N} =  \square_{Th_0^{+}(N)} F_{C_N}(\emptyset) = N$. Since $(M \oplus N)^{|C_N}$ is a subalgebra of $M \oplus N$ there is an embedding form $N$ into $M \oplus N$.  \\ 
(iii) $Th^{+}(M^{|C_N}) \not\subseteq Th^{+}(N)$ occurs if and only if there is a duple $r$ over $C_N$ such that $M^{|C_N} \models r^{+}$ and $N \models r^{-}$ and it is a consequence of theorem \ref{restrictionLemma} that $M^{|C_N} \models r^{+}$ if and only if $M \models r^{+}$,  for any duple $r$ over $C_N$. \\ 
(iv) if  $C_M \cap C_N = \emptyset$ there is no duple $r$ satisfying the conditions of proposition (iii), so there should be an embedding from $N$ into $M \oplus N$ (and another embedding from $M$ into $M \oplus N$).
\end{proof} 
\bigskip
\bigskip

Theorem \ref{embeddingInJoinModelTheorem} provides an alternative definition for the join model: \[
M \oplus N = F_{C_M \cup C_N}(Th_0^{+}(M) \cup Th_0^{+}(N)).
\]

We know how to do a restriction $M^{|Q}$, which is a subalgebra generated by a subset $Q \subset C$ of the constants. Consider the more general problem of calculating the subalgebra $S$ of a model $M$ over $C$ generated by a set of $e$ elements represented by terms $\{t_1,t_2,.., t_e\}$ over $C$. Let $G$ be a set with $e$ new constants. We can first extend the model $M$ to a model $M \oplus F_{G}$ over $C \cup G$, and then equate each of the $e$ terms to one constant in $G$ using a principal congruence as follows: \[
S \approx [|G] \, (M \oplus F_{G}(\emptyset))/\theta  \,\,\,\,\,\,\,\,\,\,\,\,\,\,\,\,\,\,  \theta =  \vee_i \Theta(t_i, g_i).
\]
where $G = \{g_1,g_2,.., g_e\}$ are constants. Once we calculate the quotient model $(M \oplus F_{G}(\emptyset))/\theta$ we do a restriction to the subset $G$ of the constants to obtain a model over $G$ that is isomorphic to the subalgebra generated by the terms $\{t_1,t_2,.., t_e\}$.

\bigskip

\begin{theorem}  \label{subalgebraTheorem} 
Let $M$ be an atomized semilattice over $\,C = \{c_1,c_2,.., c_m\}$ and $\{t_1,t_2,.., t_e\}$ a set of terms of $F_{C}(\emptyset)$. Let $G = \{g_1,g_2,.., g_e\}$ be a set of constants. The subalgebra $S$ of $M$ generated by $\{t_1,t_2,.., t_e\}$ and represented using the constants in $G$ is: \\
i) $S = [|G] \Pi^e_{i=1} \square_{g_i \leq t_i}\square_{t_i \leq g_i} (M + F_{G}(\emptyset))$. \\
ii) Let $G^k = \{g^k_1,g^k_2,.., g^k_{e(k)}\} \subseteq G$ such that $(g_i \in G^k) \Leftrightarrow (c_k \in {\bf{C}}(t_i))$. Then $S$ is the renamed model:
$S = [\tfrac{g^1_1 g^1_2 ... g^1_{e(1)}}{c_1} \tfrac{g^2_1 g^2_2 ... g^2_{e(2)}}{c_2} ... \tfrac{g^m_1 g^m_2 ... g^m_{e(m)}}{c_m}] M$.  \\
iii) The number of non-redundant atoms of $S$ is equal or smaller than the number of non-redundant atoms of $M$.
\end{theorem}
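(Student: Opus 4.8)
The plan is to get (i) straight out of the construction stated just before the theorem, to extract the explicit description (ii) by a direct translation of atoms, and to read (iii) off (ii) as a counting corollary.

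For (i), recall that the subalgebra generated by $\{t_1,\dots,t_e\}$ and recoordinatised through $G$ is $S\approx[|G]\,(M\oplus F_G(\emptyset))/\theta$ with $\theta=\vee_i\Theta(t_i,g_i)$. Since $C\cap G=\emptyset$, Theorem~\ref{sumOfModelsWhenNoIntersection} gives $M\oplus F_G(\emptyset)\approx M+F_G(\emptyset)$. I would then write $\theta=\Theta(t_1,g_1)\vee\cdots\vee\Theta(t_e,g_e)$ and use the standard fact that a quotient by a join of congruences is obtained by quotienting successively by each $\Theta(t_i,g_i)$, read inside the running quotient; since principal congruences survive the relabelling of the elements $t_i,g_i$ by their images, Theorem~\ref{congruenceFromFullCrossing}(ii) turns each such step into the pair of full crossings $\square_{t_i\leq g_i}\square_{g_i\leq t_i}$, and Theorem~\ref{fullCrossingIsCommutative} makes the order of the pairs — and of the two crossings inside a pair, since $\Theta(t_i,g_i)=\Theta(g_i,t_i)$ — irrelevant up to redundant atoms. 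This yields $(M+F_G(\emptyset))/\theta\approx\Pi_{i=1}^{e}\square_{g_i\leq t_i}\square_{t_i\leq g_i}(M+F_G(\emptyset))$, and applying $[|G]$ gives (i).

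For (ii) I would argue directly with atoms and lower atomic segments. By the Homomorphism Theorem, $S$ is isomorphic to $F_G(\emptyset)/\ker\rho$, where $\rho\colon F_G(\emptyset)\to M$ is the homomorphism $g_i\mapsto\nu_M(t_i)$; writing $w[t]$ for the term over $C$ obtained from a term $w$ over $G$ by substituting $t_i$ for each $g_i$, two terms $w,w'$ represent the same element of $S$ iff ${\bf{L}}^a_M(w[t])={\bf{L}}^a_M(w'[t])$. By the linearity property (Theorem~\ref{atomicSegmentFromTermTheorem}(v)), ${\bf{L}}^a_M(w[t])=\bigcup_{g_i\in{\bf{C}}(w)}{\bf{L}}^a_M(t_i)$, so $\phi\in{\bf{L}}^a_M(w[t])$ iff $\phi$ is an atom of $M$ with ${\bf{U}}^c(\phi)\cap{\bf{C}}(t_i)\neq\emptyset$ for some $g_i\in{\bf{C}}(w)$. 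On the other side, in $\widetilde M=[\tfrac{g^1_1 \cdots g^1_{e(1)}}{c_1}\cdots\tfrac{g^m_1 \cdots g^m_{e(m)}}{c_m}]M$ the image $\widetilde\phi$ of an atom $\phi$ of $M$ has ${\bf{U}}^c(\widetilde\phi)=\{\,g_i:{\bf{U}}^c(\phi)\cap{\bf{C}}(t_i)\neq\emptyset\,\}$, so by Theorem~\ref{atomicSegmentFromTermTheorem}(iv), $\widetilde\phi\in{\bf{L}}^a_{\widetilde M}(w)$ iff ${\bf{U}}^c(\phi)\cap{\bf{C}}(t_i)\neq\emptyset$ for some $g_i\in{\bf{C}}(w)$ — the very same condition. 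Hence $\widetilde\phi\in{\bf{L}}^a_{\widetilde M}(w)\Leftrightarrow\phi\in{\bf{L}}^a_M(w[t])$ for every atom $\phi$ of $M$, and therefore ${\bf{L}}^a_{\widetilde M}(w)={\bf{L}}^a_{\widetilde M}(w')\Leftrightarrow{\bf{L}}^a_M(w[t])={\bf{L}}^a_M(w'[t])$, so $\widetilde M$ and $S$ partition $F_G(\emptyset)$ the same way and are isomorphic. The one delicate point is a constant $c_k$ of $M$ occurring in no ${\bf{C}}(t_i)$: then $e(k)=0$ and the rename annihilates every atom of $M$ whose upper segment sits inside such constants, but one checks that such atoms never belong to any ${\bf{L}}^a_M(w[t])$, so discarding them leaves the induced partition unchanged.

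For (iii), take $A$ to be the (unique, by Theorem~\ref{uniqueAtomization}) non-redundant atomisation of $M$. By (ii) the set $\{\,\widetilde\phi:\phi\in A,\ \widetilde\phi\text{ exists}\,\}$ is an atomisation of $S$, of cardinality at most $|A|$; since redundant atoms can be stripped from any atomisation (Theorem~\ref{redundantAtom}), the non-redundant atomisation of $S$ is contained in this one, hence no larger, giving the bound. I expect (ii) to be the real obstacle: (i) is bookkeeping over results already in hand and (iii) is a counting consequence of (ii), whereas establishing the explicit renamed model requires the careful atom-by-atom translation through the congruence quotient together with the $e(k)=0$ edge case above.
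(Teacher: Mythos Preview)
Your proof is correct. Parts (i) and (iii) match the paper's argument essentially line for line.

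For part (ii) you take a genuinely different route. The paper derives (ii) \emph{from} (i): it computes explicitly what each pair of crossings $\square_{g_i\leq t_i}\square_{t_i\leq g_i}$ does to the atomisation of $M+F_G(\emptyset)$, observing that $\square_{t_i\leq g_i}$ simply adjoins $g_i$ to the upper segment of every atom $\phi$ of $M$ with $\phi<t_i$ (because $C\cap G=\emptyset$), and that $\square_{g_i\leq t_i}$ then deletes the singleton atom $\phi_{g_i}$; after all pairs and the restriction $[|G]$, each atom $\phi$ of $M$ has become exactly the renamed atom. Your argument instead proves (ii) independently of (i), by identifying $S$ with $F_G(\emptyset)/\ker\rho$ and comparing, atom by atom, the lower segments in the renamed model $\widetilde M$ with those in $M$ pulled back through the substitution $w\mapsto w[t]$. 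What the paper's approach buys is an explicit, mechanical link between the two descriptions in (i) and (ii) at the level of atoms; what your approach buys is a cleaner, self-contained verification of (ii) that does not require tracking a sequence of crossings, and it handles the $e(k)=0$ edge case just as the paper does (via annihilation of atoms whose upper segments miss all the $t_i$).
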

\begin{proof}
(i) We argued that $S \approx [|G] \, (M \oplus F_{G}(\emptyset))/\theta$, where the congruence $\,\theta =  \vee_i \Theta(t_i, g_i)$. Since $C \cap G = \emptyset$, theorem \ref{sumOfModelsWhenNoIntersection} says that we can build $M \oplus F_{G}$ simply as $M + F_{G}$, the union of two sets of atoms. Since $\theta$ is a join of congruences we can write, for any model $A$, that $A/\theta = (A/\Theta(t_1, g_1))/\Theta(t_2, g_2).../\Theta(t_e, g_e)$ and theorem \ref{congruenceFromFullCrossing} allows us to build the congruence step by step, in any order, using full crossing operations, as $A/\theta = \Pi^e_{i=1} \square_{g_i \leq t_i} \square_{t_i \leq g_i} A$.  \\
(ii) Theorem \ref{freestModelTheorem} says that $F_{G}(\emptyset)$ can be atomized with $e$ non-redundant atoms $\phi_{g_1}, \phi_{g_2},..., \phi_{g_e}$ each with a single constant $g_i$ in its upper segment. Since $C \cap G = \emptyset$ the crossing $\square_{t_i \leq g_i} (M + F_{G}(\emptyset))$ simply adds $g_i$ to the upper segment of every atom $\phi < t_i$ of $M$. The next crossing $\square_{g_i \leq t_i}$ acts by removing $\phi_{g_i}$ from the resulting atomization. Since $\phi < t_i$ if and only if there is a constant $c_k \in {\bf{C}}(t_i)$ such that $\phi < c_k$ the atom $\phi$ gains $g_i$ if and only if there is a $c_k$ such that $\phi < c_k \in {\bf{C}}(t_i)$ or, in other words if ${\bf{U}}^c(\phi) \cap {\bf{C}}(t_i) \not= \emptyset$. The result is that after computing $\Pi^e_{i=1} \square_{g_i \leq t_i}\square_{t_i \leq g_i} (M + F_{G}(\emptyset))$ all the atoms of $F_{G}(\emptyset)$ are gone and every atom $\phi$ of $M$ has gained $g_i$ if and only if ${\bf{U}}^c(\phi) \cap {\bf{C}}(t_i) \not= \emptyset$. After calculating the full crossings, obtaining the restriction to $G$ can be done by simply removing all the original $c$ constants from the upper segments of the atoms, eliminating every atom that has no constants of $G$ in its upper segment. At the end, each atom of $\phi$ of $M$ has changed to $[\tfrac{g^1_1 g^1_2 ... g^1_{e(1)}}{c_1} \tfrac{g^2_1 g^2_2 ... g^2_{e(2)}}{c_2} ... \tfrac{g^m_1 g^m_2 ... g^m_{e(m)}}{c_m}] \phi$ and the proposition follows from theorem \ref{restrictionLemma}. Note that some $G^k$ may be empty; in this case $\tfrac{\emptyset}{c_k}$ simply removes $c_k$ from the upper segment of the atoms. \\
(iii) Take an atomization for $M$ with just non-redundant atoms and calculate $S$ using proposition ii. The atoms of $S$ are renamed atoms of $M$, hence, $S$ cannot have more non-redundant atoms than $M$.  
\end{proof}

\bigskip
\bigskip

\begin{example}   Consider the subalgebra of $F_{c_1,c_2,c_3}(\emptyset)$ generated by the terms $t_1 = c_1$, $t_2 = c_2$, $t_3 = c_1 \odot c_3$ and $t_4 = c_2 \odot c_3$.  It follows:\[
F_{c_1,c_2,c_3}(\emptyset) \models (t_1 < t_3) \wedge (t_2 < t_4) \wedge (t_2 \odot t_3 = t_4 \odot t_1).
\]
Applying the theorem \ref{subalgebraTheorem}(ii) we can calculate the subalgebra as:\[
S = [\tfrac{g_1 \, g_3}{c_1} \tfrac{g_2 \, g_4}{c_2}\tfrac{g_3 \, g_4}{c_3}] [\phi_{c_1}, \phi_{c_2}, \phi_{c_3}] = [\phi_{g_1 \, g_3}, \phi_{g_2 \, g_4}, \phi_{g_3 \, g_4}], 
\]
a model over $G = \{ g_1, g_2, g_3, g_4 \}$ that satisfies:\[
S \models (g_1 < g_3) \wedge (g_2 < g_4) \wedge (g_2 \odot g_3 = g_4 \odot g_1).
\]
Same result should be obtained from  theorem \ref{subalgebraTheorem}(i): \[
S = [|G] \Pi^e_{i=1} \square_{g_i \leq t_i}\square_{t_i \leq g_i} [\phi_{c_1}, \phi_{c_2}, \phi_{c_3}, \phi_{g_1}, \phi_{g_2}, \phi_{g_3} , \phi_{g_4}] =
\]\[
= [|G]  \square_{g_4 \leq c_2 \odot c_3}\square_{c_2 \odot c_3 \leq g_4}  \square_{g_3 \leq c_1 \odot c_3}\square_{c_1 \odot c_3 \leq g_3}  \square_{g_2 \leq c_2}\square_{c_2 \leq g_2}   \square_{g_1 \leq c_1}\square_{c_1 \leq g_1}      [\phi_{c_1}, \phi_{c_2}, \phi_{c_3}, \phi_{g_1}, \phi_{g_2}, \phi_{g_3}, \phi_{g_4}] =
\]\[
= [|G]  \square_{g_4 \leq c_2 \odot c_3}\square_{c_2 \odot c_3 \leq g_4}  \square_{g_3 \leq c_1 \odot c_3}\square_{c_1 \odot c_3 \leq g_3}   [\phi_{c_1 \, g_1}, \phi_{c_2  \, g_2}, \phi_{c_3}, \phi_{g_3}, \phi_{g_4}] = 
\]\[
= [|G]  \square_{g_4 \leq c_2 \odot c_3}\square_{c_2 \odot c_3 \leq g_4}   [\phi_{c_1 \, g_1  \, g_3}, \phi_{c_2  \, g_2}, \phi_{c_3  \, g_3} , \phi_{g_4}] = 
\]\[
= [|G]   [\phi_{c_1 \, g_1  \, g_3}, \phi_{c_2  \, g_2  \, g_4}, \phi_{c_3  \, g_3  \, g_4}] =   [\phi_{g_1  \, g_3}, \phi_{g_2  \, g_4}, \phi_{g_3  \, g_4}].
\]

\end{example}

\bigskip
\bigskip

It is clear from theorem \ref{subalgebraTheorem} that any renaming of constants produces a subalgebra:

\bigskip

\begin{theorem}  \label{renamesAreSubalgebrasTheorem} 
Let $M$ by an atomized semilattice over $\,C = \{c_1,c_2,.., c_m\}$. Suppose we are given  $m$ sets $G^1, G^2,...G^m$ with $G^k = \{g^k_1,g^k_2,.., g^k_{e(k)}\} \subseteq G$ such that $G \cap C = \emptyset$ and the set $G = \cup_k G^k$ has cardinal $e =  \vert G \vert$. The rename: $[\tfrac{g^1_1 g^1_2 ... g^1_{e(1)}}{c_1} \tfrac{g^2_1 g^2_2 ... g^2_{e(2)}}{c_2} ... \tfrac{g^m_1 g^m_2 ... g^m_{e(m)}}{c_m}] M$ is isomorphic to the subalgebra of $M$ generated by terms $t_1, t_2,...,t_e$ that satisfy: $\,c_k$ is a component constant of $t_i$ if and only if $g_i \in G^k$.
\end{theorem}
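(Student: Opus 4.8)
The plan is to read this statement as essentially a restatement of Theorem~\ref{subalgebraTheorem}(ii): the only extra content is that the data $G^1,\dots,G^m$ singles out a unique tuple of generating terms to which that theorem applies. First I would observe that, since the elements of $F_C(\emptyset)$ are in bijection with the non-empty subsets of $C$ (two idempotent summations being equal exactly when they mention the same constants), the prescription ``$c_k\in\mathbf{C}(t_i)\Leftrightarrow g_i\in G^k$'' determines each $t_i$ uniquely: $t_i$ is the idempotent summation of the set $S_i=\{c_k : g_i\in G^k\}$. Moreover $S_i\neq\emptyset$, because the hypothesis $G=\bigcup_k G^k$ says every $g_i$ lies in at least one $G^k$; hence each $t_i$ is a genuine term over $C$, and the tuple $(t_1,\dots,t_e)$ mentioned in the statement exists (and is unique). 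If two of the index sets $S_i$ happen to coincide the corresponding $t_i$ coincide as well, which is harmless.

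Next I would invoke Theorem~\ref{subalgebraTheorem}(ii) for the model $M$, the fresh constants $G=\{g_1,\dots,g_e\}$, and the terms $t_1,\dots,t_e$ just constructed. The index sets produced inside that theorem, namely $G^k=\{g_i : c_k\in\mathbf{C}(t_i)\}$, are by construction exactly the given sets $G^k$; so the theorem yields that the subalgebra $S$ of $M$ generated by $t_1,\dots,t_e$, represented using the constants of $G$, equals the renamed model $[\tfrac{g^1_1 g^1_2\cdots g^1_{e(1)}}{c_1}\tfrac{g^2_1 g^2_2\cdots g^2_{e(2)}}{c_2}\cdots\tfrac{g^m_1 g^m_2\cdots g^m_{e(m)}}{c_m}]M$. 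Since passing from $S$ to the subalgebra of $M$ generated by $t_1,\dots,t_e$ only renames the generators back from the $g_i$ to the $t_i$, the two are isomorphic, which is the assertion of the theorem.

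There is essentially no obstacle here; the argument is pure bookkeeping on top of Theorem~\ref{subalgebraTheorem}. The one place a hypothesis is genuinely used is the non-emptiness of each $S_i$, which relies on $G=\bigcup_k G^k$: without it some $g_i$ would be a component constant of no $t_i$ and could not appear as a fresh constant in the renamed model. I would also carry over the remark already made in the proof of Theorem~\ref{subalgebraTheorem}, that an empty $G^k$ is allowed, in which case $\tfrac{\emptyset}{c_k}$ simply deletes $c_k$ from the upper segment of every atom, consistently with $c_k$ being a component constant of none of the $t_i$.
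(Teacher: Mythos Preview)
Your proposal is correct and follows the same approach as the paper: both reduce the statement directly to Theorem~\ref{subalgebraTheorem}(ii) by observing that the condition $g_i\in G^k \Leftrightarrow c_k\in\mathbf{C}(t_i)$ is exactly what that theorem requires. Your version is more careful than the paper's in verifying that the terms $t_i$ actually exist (via $G=\bigcup_k G^k$ ensuring each $S_i\neq\emptyset$) and in spelling out the passage from the $G$-named subalgebra back to the subalgebra of $M$ itself, but the core argument is identical.
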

\begin{proof}
It follows that $(g_i \in G^k) \Leftrightarrow (c_k \in {\bf{C}}(t_i))$ and we can use theorem \ref{subalgebraTheorem} to state that $[\tfrac{g^1_1 g^1_2 ... g^1_{e(1)}}{c_1} \tfrac{g^2_1 g^2_2 ... g^2_{e(2)}}{c_2} ... \tfrac{g^m_1 g^m_2 ... g^m_{e(m)}}{c_m}] M$ is the subalgebra of $M$ generated by the terms $t_1, t_2,...,t_e$.
\end{proof}

\bigskip
\bigskip

\begin{theorem}  \label{modelAsASubalgebra} 
Let $M$ be an atomized semilattice over $\,C = \{c_1,c_2,.., c_m\}$ and $A = \{\phi_1,\phi_2,.., \phi_z\}$ a set of atoms of cardinal $z = \vert A \vert$ that atomizes $M$. Let $Z$ be a set of $z$ constants. $M$ is the subalgebra of $F_Z(\emptyset)$ generated by the terms $\{t_1,t_2,.., t_m\}$  over $Z$ defined by: $\,z_k$ is a component constant of $t_i$ if and only if $c_i \in {\bf{U}}^c(\phi_k)$.
\end{theorem}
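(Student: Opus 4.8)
The plan is to obtain this statement as a direct instance of theorem~\ref{subalgebraTheorem}(ii) after a change of roles. We read $F_Z(\emptyset)$ as the ``ambient'' model (playing the role of ``$M$'' in theorem~\ref{subalgebraTheorem}) over the constants $Z=\{z_1,\dots,z_z\}$ (playing the role of ``$C$'', so the count ``$m$'' there becomes $z$ here), with $\{t_1,\dots,t_m\}$ as the generating terms (so ``$e$'' there becomes $m$ here) and with $C=\{c_1,\dots,c_m\}$ as the fresh set of constants ``$G$'' in which the subalgebra is to be presented. Since $Z$ may be chosen disjoint from $C$, the disjointness hypothesis $C\cap G=\emptyset$ of theorem~\ref{subalgebraTheorem} is met. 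Note first that each $t_i$ is a genuine nonempty term of $F_Z(\emptyset)$: because $A$ atomizes $M$ and $c_i$ is a constant, axiom AS6 together with theorem~\ref{atomicSegmentFromTermTheorem} gives some $\phi_k\in A$ with $c_i\in{\bf{U}}^c(\phi_k)$, i.e. $z_k\in{\bf{C}}(t_i)$.

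First I would fix the canonical atomization of the ambient model: by theorem~\ref{freestModelTheorem}, $F_Z(\emptyset)$ is atomized by $z$ non-redundant atoms $\psi_1,\dots,\psi_z$ with ${\bf{U}}^c(\psi_k)=\{z_k\}$. Next I would identify the sets $G^k$ appearing in theorem~\ref{subalgebraTheorem}(ii). Under the role swap, $G^k\subseteq C$ is characterized by $(c_i\in G^k)\Leftrightarrow(z_k\in{\bf{C}}(t_i))$, and by the defining property of the terms $t_i$ in the statement, $z_k\in{\bf{C}}(t_i)$ holds exactly when $c_i\in{\bf{U}}^c(\phi_k)$. Hence $G^k=\{\,c_i : c_i\in{\bf{U}}^c(\phi_k)\,\}={\bf{U}}^c(\phi_k)$.

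Theorem~\ref{subalgebraTheorem}(ii) then asserts that the subalgebra $S$ of $F_Z(\emptyset)$ generated by $\{t_1,\dots,t_m\}$, presented over $C$, is the renamed model $S=[\tfrac{G^1}{z_1}\,\tfrac{G^2}{z_2}\cdots\tfrac{G^z}{z_z}]F_Z(\emptyset)$. Applying the rename operator atom by atom, the atom $\psi_k$ (with upper segment $\{z_k\}$) is sent to the atom whose upper constant segment is $G^k={\bf{U}}^c(\phi_k)$. Since an atom is a nonempty subset of $C$, none of the $G^k$ is empty, so no atom is annihilated by the renaming; and by definition~\ref{atomInAsAsetDefinition} together with axiom AS5 the resulting atom is precisely $\phi_k$. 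Therefore $S$ is atomized by $\{\phi_1,\dots,\phi_z\}=A$, i.e. $S=[A]=M$, which is the claim. One can also phrase this conclusion via theorem~\ref{restrictionLemma}, which underlies the behaviour of the rename on atoms.

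The only delicate point is the bookkeeping of this role reversal: one must check that the index $k$ in theorem~\ref{subalgebraTheorem}(ii) ranges over the constants of the ambient model $F_Z(\emptyset)$ --- that is, over the \emph{atoms} of $M$ --- so that the family $\{G^k\}$ is indexed by atoms of $M$ and the transposed incidence ``$c_i\in{\bf{U}}^c(\phi_k)$'' appears correctly. Once this is set up, the computation of $G^k$ and the observation that the rename merely re-labels the singleton atoms of $F_Z(\emptyset)$ into the prescribed atoms of $M$ are immediate, and no further obstacle arises, since the substantive content --- that full crossing realizes the relevant congruences, and that $F_Z(\emptyset)$ admits a $z$-atom singleton atomization --- is already supplied by theorems~\ref{subalgebraTheorem} and~\ref{freestModelTheorem}.
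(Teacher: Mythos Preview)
Your proof is correct and follows essentially the same approach as the paper. The paper phrases it slightly differently --- first writing down the rename $M=[\Pi^z_{k=1}\tfrac{{\bf{U}}^c(\phi_k)}{z_k}]F_Z(\emptyset)$ and then invoking theorem~\ref{renamesAreSubalgebrasTheorem} to identify it as the subalgebra generated by the $t_i$ --- whereas you go directly to theorem~\ref{subalgebraTheorem}(ii) and compute $G^k={\bf{U}}^c(\phi_k)$; since theorem~\ref{renamesAreSubalgebrasTheorem} is itself an immediate corollary of theorem~\ref{subalgebraTheorem}(ii), the two arguments are the same in substance.
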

\begin{proof}
$F_Z(\emptyset)$ is atomized by the same number of atoms than $M$, each atom with a single constant $z_i$ in its upper segment. Then $M$ is the rewrite of $F_Z(\emptyset)$ that maps the constants of $F_Z(\emptyset)$ to the upper segments of the atoms of $M$, as follows: $M = [\Pi^z_{k = 1}  \tfrac{{\bf{U}}^c(\phi_k)}{z_k}] F_Z(\emptyset)$. We can now use theorem \ref{renamesAreSubalgebrasTheorem} to claim $M$ as a subalgebra of $F_Z(\emptyset)$ generated by the terms $t_1, t_2,...,t_m$ over $Z$, terms that satisfy $z_k$ is a component constant of $t_i$ if and only if $c_i \in {\bf{U}}^c(\phi_k)$.
\end{proof}

\bigskip
\bigskip

We can define the product of models $M$ and $N$, as usual, as the model  $M \otimes N$ with elements $(g, h)$ where $g$ is an element of $M$ and $h$ and element of $N$. The order relation and the idempotent operator act component-wise, i.e. $(e, f)  \leq (g, h)$ if and only if $e \leq g$ and $f \leq h$. If $M$ is generated by the constants $C_M = \{a_1, a_2,...,a_m \}$ and $N$ is generated by the constants $C_N  = \{b_1, b_2,...,b_n \}$, then $M \otimes N$ has constants $(a_i, b_j)$ and is generated by them.

Suppose that $C_M \cap C_N = \emptyset$. The element $(x, y)$ of $M \otimes N$ can be mapped, one-to-one, to the idempotent summation $x \odot y$ where $x$ is an element of $M$ and $y$ an element of $N$, which works because $e \odot f  \leq g \odot h$ if and only if $e \leq g$ and $f \leq h$. We saw before that an element of the join model $M \oplus N$ looks either like an element $x$ of $M$ or like an element $y$ of 
$N$ or like a summation $x \odot y$. Therefore $M \otimes N$ is isomorphic to a subalgebra of $M \oplus N$, particularly the subalgebra generated by the terms $t_{ij} = a_i \odot b_j$. 

\bigskip

\bigskip
\begin{theorem}  \label{productModelNoIntersectTheorem} 
Let $M$ and $N$ be two semilattice models over the constants $C_M = \{a_1, a_2,...,a_m \}$ and $C_N  = \{b_1, b_2,...,b_n \}$, respectively, such $C_M \cap C_N = \emptyset$. Let $G$ by the set of $m \times n$ constants $g_{ij} = (a_i, b_j)$. The model $M \otimes N$ generated by $G$ satisfies: \\
i) $M \otimes N = [|G] \Pi^{m}_{i = 1} \Pi^{n}_{j=1}  \square_{g_{ij} \leq a_i  \odot b_j} \square_{ a_i  \odot b_j  \leq g_{ij} }  (M + N + F_{G}(\emptyset))$. \\
ii) $M \otimes N = [\tfrac{g_{1,1} g_{1,2} ... g_{1, n}}{a_1} \tfrac{g_{2,1} g_{2,2} ... g_{2, n}}{a_2} ... \tfrac{g_{m,1} g_{m,2} ... g_{m, n}}{a_m}] [\tfrac{g_{1,1} g_{2,1} ... g_{m, 1}}{b_1} \tfrac{g_{1,2} g_{2,2} ... g_{m, 2}}{b_2} ... \tfrac{g_{1,n} g_{2,n} ... g_{m, n}}{b_n}] (M + N)$.
iii) $M \otimes N$ can be atomized with as few non-redundant atoms as the number of non-redundant atoms of $M$ plus the number of non-redundant atoms of $N$.
\end{theorem}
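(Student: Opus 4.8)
\end{theorem}

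\noindent\textbf{Proof sketch.} The plan is to derive all three parts from the subalgebra theorem (theorem \ref{subalgebraTheorem}). The key preliminary observation, already recorded in the paragraph preceding the statement, is that when $C_M\cap C_N=\emptyset$ the product $M\otimes N$ is isomorphic to the subalgebra of $M\oplus N$ generated by the terms $t_{ij}=a_i\odot b_j$; and by theorem \ref{sumOfModelsWhenNoIntersection} the disjointness of the constants gives $M\oplus N\approx M+N$. So the first step is to pin down that $M\otimes N$ is (isomorphic to) the subalgebra of $M+N$ over $C_M\cup C_N$ generated by $\{t_{ij}:1\le i\le m,\ 1\le j\le n\}$, to be represented using the fresh constants $G=\{g_{ij}\}$.

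For part (i) I would invoke theorem \ref{subalgebraTheorem}(i) with ``$M$'' replaced by $M+N$, the generating terms $t_{ij}$, and the new constants $G$. That theorem yields $[|G]\,\prod_{i,j}\square_{g_{ij}\le t_{ij}}\square_{t_{ij}\le g_{ij}}\bigl((M+N)+F_G(\emptyset)\bigr)$; substituting $(M+N)+F_G(\emptyset)=M+N+F_G(\emptyset)$ and $t_{ij}=a_i\odot b_j$, and using that full crossing is commutative up to redundant atoms (theorem \ref{fullCrossingIsCommutative}) so the double product may be ordered arbitrarily, reproduces the displayed formula. For part (ii) I would invoke theorem \ref{subalgebraTheorem}(ii); the only work is to identify the index sets $G^k$. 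For $c_k=a_p$ one needs $g_{ij}\in G^{a_p}$ iff $a_p\in{\bf{C}}(t_{ij})=\{a_i,b_j\}$, which, since $C_M\cap C_N=\emptyset$, forces $i=p$, so $G^{a_p}=\{g_{p,1},g_{p,2},\dots,g_{p,n}\}$; symmetrically $G^{b_q}=\{g_{1,q},g_{2,q},\dots,g_{m,q}\}$. Plugging these into the rename of theorem \ref{subalgebraTheorem}(ii) gives exactly the double rename asserted in (ii); note each $G^k$ is non-empty, so the rename annihilates no atom.

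For part (iii) I would argue directly from (ii). Take non-redundant atomizations $A$ of $M$ and $B$ of $N$, so $M+N=[A\cup B]$; since the upper constant segments of the atoms of $A$ sit inside $C_M$ and those of $B$ inside $C_N$, with $C_M\cap C_N=\emptyset$, the set $A\cup B$ has exactly $|A|+|B|$ distinct atoms, where $|A|$ and $|B|$ are the numbers of non-redundant atoms of $M$ and of $N$. By (ii), $M\otimes N$ is a rename of $[A\cup B]$, and the rename sends each atom of $A\cup B$ to a single atom with non-empty upper segment, so $M\otimes N$ is atomized by at most $|A|+|B|$ atoms and hence (by the uniqueness of the non-redundant atomization, theorem \ref{uniqueAtomization}) has at most that many non-redundant atoms. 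Alternatively one could quote theorem \ref{subalgebraTheorem}(iii) once it is checked that $M+N$ itself has precisely $|A|+|B|$ non-redundant atoms.

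I do not expect a genuine obstacle here: the three parts are bookkeeping on top of theorems \ref{subalgebraTheorem}, \ref{sumOfModelsWhenNoIntersection} and \ref{fullCrossingIsCommutative}. The one point deserving care is the initial reduction --- justifying that $M\otimes N$ is the subalgebra of $M+N$ generated by the products $a_i\odot b_j$ --- which rests on the explicit description of the elements of a join model (of the forms $x$, $y$, $x\odot y$) together with the observation that $e\odot f\le g\odot h$ iff $(e\le g)\wedge(f\le h)$ when the two constant sets are disjoint.
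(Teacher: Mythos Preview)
Your proposal is correct and follows essentially the same route as the paper: both reduce everything to theorem \ref{subalgebraTheorem} applied to $M\oplus N$, together with the identification $M\oplus N=M+N$ from theorem \ref{sumOfModelsWhenNoIntersection}. Your write-up is in fact more explicit than the paper's (you compute the index sets $G^{a_p}$, $G^{b_q}$ and justify the disjointness of $A\cup B$), but the underlying argument is the same.
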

\begin{proof}
(i) Since $M \otimes N$ is isomorphic to the subalgebra of $M \oplus N$ generated by the terms $t_{ij} = a_i  \odot b_j$, we can use theorem \ref{subalgebraTheorem}: $M \otimes N = [|G] \Pi^{m}_{i = 1} \Pi^{n}_{j=1}  \square_{g_{ij} \leq a_i  \odot b_j} \square_{ a_i  \odot b_j  \leq g_{ij} }  ((M \oplus N) + F_{G}(\emptyset))$. Since $C_M \cap C_N = \emptyset\,$ it is possible to substitute $M \oplus N$ by $M + N$. \\
(ii) This is the result of applying theorem \ref{subalgebraTheorem}(ii) to $M \oplus N = M + N$. \\
(iii) Follows directly from proposition (ii), which says that each atom of $M \otimes N$ is a rename of an atom in the set $M + N$. 
\end{proof}
\bigskip
\bigskip

Theorem \ref{productModelNoIntersectTheorem}(ii) gives a very simple way to find an atomization for $M \otimes N$ from atomizations of $M$ and $N$.

\bigskip
 
There is a theorem by Birkhoff that says that every algebraic structure $A$ is isomorphic to a subdirect product of subdirectly irreducible algebraic structures that are homomorphic images of $A$ (see for example  \cite{Burris,Birkhoff}). Stone's representation theorem can be seen as a particular case for Boolean algebras  \cite{Stone}. We give now a constructive proof for the celebrated result that any non-trivial semilattice is a subdirect product of two-element semilattices. In the next theorem we explicitly build the subdirectly irreducible components and show that each component maps to an atom of $M$ and is a semilattice with two elements and spawned by two atoms.

\bigskip
\bigskip

\begin{theorem}  \label{modelAsASubdirectProduct} 
Let $M$ be an atomized semilattice over $\,C = \{c_1,c_2,.., c_m\}$ and $A = \{\phi_1,\phi_2,.., \phi_z\}$ a non-empty set of atoms of cardinal $z = \vert A \vert$ such that all $\phi_i \not= \ominus_c$ and $M = [ A \cup  \ominus_c]$. Assume $M$ is not trivial, i.e. $M \not= [\ominus_c]$. Let $B_1, B_2,...,B_z$ a set of $z$ models, such that $B_j = [\psi_{z_j}, \psi_{z_j \overline{z}_j}]$ is a model over two constants $z_j, \overline{z}_j$ with $U^c(\psi_{z_j}) = \{z_j\}$ and $U^c(\psi_{z_j \overline{z}_j}) = \{z_j, \overline{z}_j\}$ that satisfies $B_j \models (\overline{z}_j \lneq z_j)$ and is equal to the homomorphic image of $M$ under the homomorphism: $h_j:t  \mapsto z_j$ if $\phi_j < t$ and  $h_j:t  \mapsto \overline{z}_j$ if $\phi_j \not< t$. \\
 i) $M$ is isomorphic to the subalgebra of $\otimes^z_{j=1} B_j$ generated by the constants $u_{c_1},u_{c_2},..., u_{c_m}$, each equal to a tuple $u_{c_i} = (u_{i1}, u_{i2},..., u_{iz})$ with $u_{ij} = z_j$ if $\phi_j < c_i$ or $u_{ij} = \overline{z}_j$ if $\phi_j \not< c_i$.  \\ 
ii) $M$ is isomorphic to the subalgebra of $\oplus^z_{j=1} B_j$ generated by the terms $\{t_1,t_2,.., t_m\}$ over $2Z = \cup^{z}_{j =1} \{ z_j, \overline{z}_j \}$ defined by: $\,z_k$ is a component constant of $t_i$ if and only if $\phi_k < c_i$ and $\,\overline{z}_k$ is a component constant of $t_i$ if and only if $\phi_k \not< c_i$. 
\end{theorem}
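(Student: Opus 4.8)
The plan is to reduce both statements to the rename-as-subalgebra results already established, principally Theorem \ref{renamesAreSubalgebrasTheorem} (equivalently Theorem \ref{subalgebraTheorem}(ii)), once an explicit atomization of the join $\oplus^z_{j=1}B_j$ is in hand. Each $B_j$ is a genuine atomized semilattice: it is spawned by the two atoms $\psi_{z_j}$ and $\psi_{z_j\overline z_j}$, axiom AS6 holds since $\psi_{z_j}<z_j$ and $\psi_{z_j\overline z_j}<\overline z_j$, and $\psi_{z_j\overline z_j}$ is the zero atom $\ominus$ of $B_j$. Because the constant sets $\{z_j,\overline z_j\}$ are pairwise disjoint, iterating Theorem \ref{sumOfModelsWhenNoIntersection} shows that $\oplus^z_{j=1}B_j$ is atomized by $\Psi=\bigcup_{j=1}^{z}\{\psi_{z_j},\psi_{z_j\overline z_j}\}$, a set of $2z$ atoms over the $2z$ constants $2Z$, and in particular $\oplus^z_{j=1}B_j$ is an atomized semilattice to which Theorem \ref{renamesAreSubalgebrasTheorem} applies. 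I would prove (ii) first and deduce (i) from it.

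For (ii), I would apply Theorem \ref{renamesAreSubalgebrasTheorem} to $\oplus^z_{j=1}B_j$ over $2Z$, taking the incidence data $G^{z_k}=\{\,i:\phi_k<c_i\,\}$ and $G^{\overline z_k}=\{\,i:\phi_k\not<c_i\,\}$; these two sets partition $\{1,\dots,m\}$, and both are nonempty precisely because $\phi_k\neq\ominus_c$ (AS1 gives a $c_i$ with $\phi_k<c_i$, and ${\bf{U}}^{c}(\phi_k)\neq C$ gives a $c_i$ with $\phi_k\not<c_i$). The incidence condition the theorem imposes on its generating terms is then exactly the one defining $t_1,\dots,t_m$ in the statement, so the subalgebra of $\oplus^z_{j=1}B_j$ generated by $\{t_1,\dots,t_m\}$ is isomorphic to the rename $\bigl[\tfrac{G^{z_1}}{z_1}\tfrac{G^{\overline z_1}}{\overline z_1}\cdots\tfrac{G^{z_z}}{z_z}\tfrac{G^{\overline z_z}}{\overline z_z}\bigr]\bigl(\oplus^z_{j=1}B_j\bigr)$. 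Computing this rename atom by atom and identifying index $i$ with constant $c_i$: $\psi_{z_k}$, with upper segment $\{z_k\}$, maps to the atom with upper segment $\{\,c_i:\phi_k<c_i\,\}={\bf{U}}^{c}(\phi_k)$, i.e.\ to $\phi_k$ itself; and $\psi_{z_k\overline z_k}$ maps to the atom with upper segment $G^{z_k}\cup G^{\overline z_k}=C$, i.e.\ to $\ominus_c$. Hence the renamed model is atomized by $\{\phi_1,\dots,\phi_z\}\cup\{\ominus_c\}$ and so equals $M=[A\cup\ominus_c]$, which is (ii). (Non-triviality of $M$ is used only to guarantee $A$ is nonempty, i.e.\ $z\geq1$.)

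For (i), I would invoke the embedding of a product into the corresponding join: by the remarks preceding Theorem \ref{productModelNoIntersectTheorem}, two models with disjoint constant sets satisfy $M\otimes N\hookrightarrow M\oplus N$ via $(x,y)\mapsto x\odot y$, and iterating over the pairwise-disjoint $B_1,\dots,B_z$ yields an embedding $\otimes^z_{j=1}B_j\hookrightarrow\oplus^z_{j=1}B_j$ sending each constant tuple $(w_1,\dots,w_z)$ to $w_1\odot\cdots\odot w_z$. In particular the generator $u_{c_i}=(u_{i1},\dots,u_{iz})$ is sent to $u_{i1}\odot\cdots\odot u_{iz}$, which is exactly the term $t_i$ of part (ii) since $u_{ij}=z_j$ precisely when $\phi_j<c_i$. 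An embedding carries the subalgebra generated by $\{u_{c_1},\dots,u_{c_m}\}$ isomorphically onto the subalgebra generated by its image $\{t_1,\dots,t_m\}$, and by (ii) the latter is isomorphic to $M$; this gives (i). Tracing the isomorphisms shows the composite acts by $\nu_M(t)\mapsto(h_1(t),\dots,h_z(t))$, reconnecting the construction with the quotient maps $h_j$.

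Conceptually there is no obstacle — the argument is an assembly of earlier theorems — so the real effort is bookkeeping: confirming $\oplus^z_{j=1}B_j$ is an atomized semilattice so that Theorem \ref{renamesAreSubalgebrasTheorem} applies (just AS6), matching the index sets in that theorem precisely, and checking the short computation that the rename collapses every $\psi_{z_j\overline z_j}$ to $\ominus_c$ while fixing every $\psi_{z_k}$ as $\phi_k$. The hypotheses bite only through $\phi_j\neq\ominus_c$ (so each $B_j$ genuinely has two elements and no degenerate renaming $\tfrac{\emptyset}{z_j}$ or $\tfrac{\emptyset}{\overline z_j}$ occurs) and non-triviality of $M$ (so $z\geq1$). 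I expect the index-matching step to be where the most care is needed.
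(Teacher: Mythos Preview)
Your proposal is correct and uses the same ingredients as the paper; the only real difference is the order of assembly. The paper proves (i) first by a direct verification that $c_i\mapsto u_{c_i}$ is an embedding (checking $\otimes_j B_j\models u_x\leq u_y$ iff $M\models x\leq y$ componentwise), and only afterwards recasts it via the rename machinery; (ii) is then deduced from (i) together with the inclusion $\otimes_j B_j\hookrightarrow\oplus_j B_j$, and the paper closes with exactly the rename computation you give, sending $\psi_{z_j}\mapsto\phi_j$ and $\psi_{z_j\overline z_j}\mapsto\ominus_c$. You simply reverse the flow: prove (ii) cleanly from Theorem~\ref{renamesAreSubalgebrasTheorem} applied to $\oplus_j B_j=[\cup_j\{\psi_{z_j},\psi_{z_j\overline z_j}\}]$, then pull (i) back through the product-to-join embedding. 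Your route is a bit more modular and avoids the direct order computation in the product; the paper's route makes the subdirect-product structure and the role of the maps $h_j$ more visibly explicit up front. Either way the core step is the same rename identity.
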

\begin{proof}
(i) Consider an embedding $h$ that maps each constant $c_i$ of $M$ to a tuple $u_{c_i}$ with $z$ components, that has component $j$ equal to $z_j$ when $\phi_j < c_i$, and equal to $\overline{z}_j$ when $\phi_j \not< c_i$. Note that $\otimes^z_{j=1} B_j$ has $2^z$ constants and that, since $\odot$ operates component-wise in a product and $\overline{z}_j \lneq z_j$, every summation of constants of $\otimes^z_{j=1} B_j$ produce a constant of $\otimes^z_{j=1} B_j$. Our embedding $h$ maps each regular element $x$ of $M$ to a z-tuple $u_x$ that is a constant of $\otimes^z_{j=1} B_j$ and has, at position $j$: either $z_j$ if $\phi_j < x$ or $\overline{z}_{j}$ if $\phi_j \not< x$, and then $\otimes^z_{j=1} B_j \models (u_x \leq u_y)$ if and only if $M \models (x \leq y)$. Therefore, $M$ is isomorphic to the subalgebra of $\otimes^z_{j=1} B_j$ spawned by the constants $u_{c_1},u_{c_2},..., u_{c_m}$, as we wanted to proof.    \\
Let us check that it is possible to use theorem \ref{productModelNoIntersectTheorem} to build $M$ as a subalgebra of $\otimes^z_{j=1} B_j$, i.e. as a rename of $B_1 + B_2 +....+ B_z$. The union $B_1 + B_2 +....+ B_z$ is atomized by $\cup^z_{j = 1} \{\psi_{z_j}, \psi_{z_j \overline{z}_j}\}$, i.e. it is a model with $2z$ non-redundant atoms. Let $H$ be the set of $2^z$ constants of $\otimes^z_{j=1} B_j$. Each $h \in H$ is a z-tuple. The product is then equal to \[
\otimes^z_{j=1} B_j = [\Pi^z_{j = 1} \tfrac{ h \in H \text{ : } h_j = z_j }{z_j} \tfrac{ h \in H \text{ : } h_j = \overline{z}_j }{\overline{z}_j}]  [ \cup^z_{j = 1} \{\psi_{z_j}, \psi_{z_j \overline{z}_j}\}],
\]
and the subalgebra of $\otimes^z_{j=1} B_j$ generated by elements $u_{c_1},u_{c_2},..., u_{c_m}$ named as $c_1,c_2,.., c_m$ can then be obtained as:\[
M = [ \Pi^{2^z}_{k = 1}  \tfrac{c_i \,\in\, C \text{ : } h_k = u_{c_i} }{h_k} ]    \otimes^z_{j=1} B_j,
\]
a rename that annihilates most constants of $ \otimes^z_{j=1} B_j$. In fact, among the $2^z$ constants, the rename leaves just the $m$ constants $u_{c_1},u_{c_2},..., u_{c_m}$. Putting both expressions together: \[
M = [\Pi^z_{j = 1} \tfrac{ c_i \,\in\, C \text{ : }  u_{ij} = z_j }{z_j} \,\, \tfrac{ c_i \,\in\, C \text{ : }  u_{ij} =\overline{z}_j}{\overline{z}_j}]  [ \cup^z_{j = 1} \{\psi_{z_j}, \psi_{z_j \overline{z}_j}\}],
\]
which renames $\psi_{z_j}$ to $\phi_j$ and renames every $\psi_{z_j \overline{z}_j}$ to $\ominus_c$.   \\
Note that, without loss of generality (except for excluding the trivial semilattice), we had to require the atoms of $A$ to satisfy $\phi_i \not= \ominus_c$ because, if not, the homomorphism from $M$ to some component $B_i$ may not be surjective.  \\
(ii) This proposition follows from proposition (i) which says that $M$ is a subalgebra of $\otimes^z_{j=1} B_j$, and from the fact that $\otimes^z_{j=1} B_j$ is a subalgebra of $\oplus^z_{j=1} B_j$. To be explicit, we are going to give a couple of constructions of $M$ as a subalgebra of $\oplus^z_{j=1} B_j$. \\
Consider a construction very similar to the one made for proposition (i) but with z-tuples replaced by terms of $z$ components. Each constant $c_i$ of $M$ maps to a term $t_i$ over $2Z$ with component constant $z_j$ if $\phi_j < c_i$, or with component constant $\overline{z}_j$ if $\phi_j \not< c_i$. Since $\odot$ is associative and commutative and taking into account that $\overline{z}_j \lneq z_j$ then $\odot$ operates over the terms $t_i$ producing elements that can be represented with a term with $z$ component constants. For each regular element $x$ of $M$, the idempotent summation, $\odot_{i:\, c_i \leq x} t_i$ that runs along every $i$ such that $c_i \leq x$ produces a term $t_x$ over $2Z$ with exactly $z$  component constants, a term that has either component constant $z_j$ if $\phi_j < x$ or component constant $\overline{z_j}$ if $\phi_j \not< x$. It follows that $\oplus^z_{j=1} B_j \models (t_x \leq t_y)$ if and only if $M \models (x \leq y)$, so $M$ is isomorphic to the subalgebra generated by the terms $\{t_1,t_2,.., t_m\}$ which proves the theorem.   \\
Here is another explicit construction: Since the constants of the different $B_j$ are all pairwise disjoint $\oplus^z_{j=1} B_j  =  B_1 + B_2 +....+ B_z = [\cup^z_{j = 1} \{\psi_{z_j}, \psi_{z_j \overline{z}_j}\}]$ and $M$ is the rename: \[
M = [\Pi^z_{j = 1} \tfrac{ c_i \,\in\, C \text{ : }   \phi_j < c_i  }{z_j} \,\, \tfrac{  c_i\,\in\, C \text{ : }   \phi_j \not< c_i  }{\overline{z}_j} ]  [ \cup^z_{j = 1} \{\psi_{z_j}, \psi_{z_j \overline{z}_j}\}],
\]
that renames $\psi_{z_j}$ to $\phi_j$ and  $\psi_{z_j \overline{z}_j}$ to $\ominus_c$.  Since $M$ is a rename of the constants of $\oplus^z_{j=1} B_j$, theorem \ref{renamesAreSubalgebrasTheorem} tells us that $M$ is a subalgebra of $\oplus^z_{j=1} B_j$. \\
Equivalently, we could choose the rename: \[
M = [\Pi^z_{j = 1} \tfrac{ c_i\,\in\, C \text{ : }  \phi_j < c_i  }{z_j} \, \tfrac{ C }{\overline{z}_j} ]  [ \cup^z_{j = 1} \{\psi_{z_j}, \psi_{z_j \overline{z}_j}\}],
\]
that also renames $\psi_{z_j}$ to $\phi_j$ and  $\psi_{z_j \overline{z}_j}$ to $\ominus_c$. The difference here is that the element $x$ is mapped to a term $t'_x$ over $2Z$, a term that has the component constant $z_j$ if $\phi_j < x$ and always has the component constant $\overline{z_j}$.  Note that  $\oplus^z_{j=1} B_j \models \forall x (t_x = t'_x)$.
\end{proof}

\bigskip
\bigskip

We have seen in theorem \ref{modelAsASubalgebra} that $M$ is a subalgebra of $F_Z(\emptyset)$, a model with $z$ constants with $z$ the number of atoms of $M$. We have seen in theorem \ref{modelAsASubdirectProduct} that $M$ is a subalgebra of the product $\otimes^z_{j=1} B_j$, a model with $2^{z}$ constants and also that $M$ is a subalgebra of $\oplus^z_{j=1} B_j$, a model with $2z$ constants.

\bigskip
\bigskip

Let  $C_M = \{a_1, a_2,...,a_m \}$ and $C_N  = \{b_1, b_2,...,b_n \}$. We have seen so far that, when $C_M \cap C_N = \emptyset$ the join model $M \oplus N$ is equal to the union $M + N$ and the product $M \otimes N$ is the rename of $M +N$: \[
 M \otimes N = [\Pi^{m}_{i = 1} \tfrac{g_{i,1} g_{i,2} ... g_{i, n}}{a_i}] [\Pi^{n}_{j=1}  \tfrac{g_{1,j} g_{2,j} ... g_{m, j}}{b_j}] (M + N) = 
\]\[
= [\Pi^{m}_{i = 1} \tfrac{g_{i,1} g_{i,2} ... g_{i, n}}{a_i}] M +  [\Pi^{n}_{j=1}  \tfrac{g_{1,j} g_{2,j} ... g_{m, j}}{b_j}] N,
\]
which corresponds to the subalgebra of $M \oplus N$ generated by the terms $t_{ij} = a_i  \odot b_j$ named as constants $g_{ij}$.

We gave in theorem \ref{sumOfModelsWithIntersection} a description of the join model $M \oplus N$ for the general case when $C_M \cap C_N  \not= \emptyset$ with no ambiguity. The definition of $M \otimes N$ in the general scenario, i.e. with constants shared among $M$ and $N$ is not that unambiguous. Suppose we define $M \otimes N$ as the subalgebra of $M \oplus N$ generated by the terms $t_{ij} = a_i  \odot b_j$. If we choose this definition and $c_u, c_v \in C_M \cap C_N$, the constants $(c_u, c_v)$ and $(c_v, c_u)$ become equal. This assumption is not necessarily true in a product of models. The alternative choice is treating models $M$ and $N$ as if their constants were disjoint:\[
M \otimes N \approx W =  [\Pi^{m}_{i = 1} \tfrac{g_{i,1} g_{i,2} ... g_{i, n}}{a_i}]M +  [\Pi^{n}_{j=1}  \tfrac{g_{1,j} g_{2,j} ... g_{m, j}}{b_j}] N.
\]
Optionally, we can identify, for every element $c \in C_M \cap C_N$ the pair $(c, c)$ with $c$. This can be done by renaming the common constants $g_{i,i}$ to $c_i$, i.e with $
M \otimes N = [\Pi_{i\, :\, c_i \in C_M \cap C_N} \tfrac{c_i}{g_{i,i}}] W.$

\section{Acknowledgments} 

We thank Nabil Abderrahaman Elena and Antonio Ricciardo for critical comments on the manuscript. We are grateful for the support from Champalimaud Foundation (Lisbon, Portugal) provided through the Algebraic Machine Learning project, from Portuguese national funds through FCT in the context of the project UIDB/04443/2020, and from the European Commission provided through projects H2020 ICT48 \emph{Humane AI; Toward AI Systems That Augment and Empower Humans by Understanding Us, our Society and the World Around Us} (grant $\# 820437$) and the H2020 ICT48 project \emph{ALMA: Human Centric Algebraic Machine Learning} (grant $\# 952091$). 

\newpage

\section{Notation and basic definitions} \label{Notation}

The reader should be familiar with the following notation and definitions:

We use a minus symbol for the subtraction of sets, i.e. we use $A - B$ instead of $A  \setminus B$. 

We use capital letters such as $M$ or $N$ to refer to semilattice models or to refer to atomized semilattice models. We use small caps to refer to elements of a model. We sometimes use the same letter for a model and the set of atoms of the model. We use Greek letters to refer to atoms.  

A term is a recipe to form an element by using the constants and the idempotent operator, for example $t = c_1 \odot  c_2 \odot c_3$. Multiple terms can yield the same element in a semilattice model. Two terms $s$ and $t$ yield the same element in a model $M$ if $M \models (s \leq t) \wedge (t \leq s)$. 

Atomized semilattices have elements of two sorts, the regular elements and the atoms. Regular elements form a semilattice. Regular elements and atoms form a partial order. 

The free model (or freest model) $F_C(\emptyset)$ over $C$ is the model with a different element for each term over $C$. We sometimes refer to elements of $F_C(\emptyset)$ with the word ``term''. Terms are either constants or idempotent summations of constants formed using the idempotent, binary operator $\odot$. When a term is equal to a single constant we usually refer to it with the word `constant''. Two terms are equal in the freest model if and only if they can be proven equal by using the commutative, associative and idempotent properties, so $c_1 \odot c_2 \odot c_1$ is the same term than $c_2 \odot c_1$. 

Each regular element of an atomized semilattice model $M$ over a set of constants $C$ corresponds to an equivalence class in $F_C(\emptyset)$.  Each atom element of $M$ is defined by a subset of constants taken from $C$.

We use bold to represent functions that yield sets.  We use ${\bf{C}}(M)$ for the constants and and ${\bf{A}}(M)$ for the atoms of a model $M$. The lower and upper segments of an element $x$ are defined, asymmetrically, as ${\bf{L}}_M(x) = \{y:y < x\,\vee \,y = x\}$ and ${\bf{U}}_M(x) = \{y: y > x\}$. The superscript ${ \ }^{a}$ is used to denote the intersection with the atoms: ${{\bf{L}}^{a}_M}(x) = {\bf{L}}_M(x) \cap {\bf{A}}(M)$. We also use the superscript ${ \ }^{c}$ to represent the intersection with the constants ${{\bf{L}}^{c}_M}(x) = {\bf{L}_M}(x) \cap {\bf{C}}(M)$ and  ${{\bf{U}}^{c}_M}(x) = {\bf{U}_M}(x) \cap {\bf{C}}(M)$.  

In an atomized semilattice model, ${\bf{U}}^{c}_M(\phi)$ is defined as the set of constants in the upper segment of the atom $\phi$. Since an atom is defined by the constants in its upper segment we often drop the subindex $M$ and simply write ${\bf{U}}^{c}(\phi)$. Atoms can be identified across models by its upper constant segment.  

We say an atom $\phi$ is \emph{wider} than atom $\eta$ if it is different than $\eta$ and for every constant $c$, $(\eta < c) \Rightarrow (\phi < c)$. Equivalently an atom $\phi$ is \emph{wider} than atom $\eta$ when ${{\bf{U}}^{c}}(\eta) \subsetneq {{\bf{U}}^{c}}(\phi)$.

An atom $\phi$ is \emph{redundant} with model $M$ iff for each constant $c$ such that $\phi < c$ there is at least one atom $\eta < c$ in $M$ such that $\phi$ is wider than $\eta$. An atom of $M$ that is not redundant with $M$ is called a ``non-redundant'' atom of $M$. 

We say an atom $\phi$ is ``in  $M$'' or is ``an atom of $M$'' and write $\phi \in M$ if, there is a set of atoms $A$ such that $M = [A \cup \{ \phi \}]$.

An atom is ``external'' to $M$, written $\phi \not\in M$, if it is not in $M$. With respect to a model $M$ an atom can be either a `non-redundant atom in $M$ or redundant with $M$ or external to $M$.  

The component constants ${\bf{C}}(a)$ of a constant or term is a set defined as ${\bf{C}}(c)  = \{c\}$ if $c$ is a constant and if $t$ is a term $t = c_1 \odot  c_2 \odot ... c_{n-1} \odot c_n$, as the set ${\bf{C}}(t)  = \{c_1,c_2,...,c_n\}$. Atoms have no component constants. 

We use the word "duple" to refer to an ordered pair of elements $r \equiv (a, b)$ of $F_C(\emptyset)$. We say a model $M$ satisfies the positive duple $r^{+}$ if $M \models (a \leq b)$ and satisfies the negative duple $r^{-}$ if $M \not\models (a \leq b)$ or, equivalently, $M \models (a \not\leq b)$.

A set of positive duples $R^{+}$ of a model $M$ is a set of duples that are satisfied by M. When we write $M \models R^{+}$ we mean $M \models r^{+}$ for all $r \in R^{+}$. In the same way, when we write $M \models R^{-}$ we mean $M \models r^{-}$ for all $r \in R^{-}$. 

The theory of a model $M$, written $Th(M)$, is the set of all first order sentences that are true in $M$, and $Th_0(M)$ is used to refer to atomic sentences without quantifiers that are true in $M$. We use here $Th_0(M)$ as the set of positive and negative duples satisfied by the $M$. The positive and negative theories of $M$, written $Th_0^{+}(M)$ and $Th_0^{-}(M)$ are, respectively, the sets of duples satisfied by $M$ (positive duples of $M$) and the set of duples not satisfied by $M$ (negative duples of $M$).

We say element $y$ contains $x$ if $x \leq y$. 

The atom $\ominus_c$ is the atom defined by ${\bf{U}}^{c}(\ominus_c) = C$.

When $A$ is a set of atoms we often refer to the model spawned by the atoms in $A$ simple as $A$, and sometimes by using brackets $[A]$.

The discriminant ${\bf{dis}}_M(a,b)$ in an atomized model $M$ is the set ${{\bf{L}}^{a}_M}(a) - {{\bf{L}}^{a}_M}(b)$ of atoms of $M$. The duple $a \leq b$ holds in $M$ if and only if ${\bf{dis}}_M(a,b)$ is empty. 

For each atom $\phi \not= \ominus_c$, the ``pinning term'' $T_{\phi}$ is equal to the idempotent summation of all the constants in the set $C - {\bf{U}}^{c}(\phi)$. For each constant $c \in {\bf{U}}^{c}(\phi)$ define the ``pinning duple'' as the negative atomic sentence $(c \not< T_{\phi})$.  $PR(\phi)$ is defined as the set of pinning duples of an atom $\phi$. 

We often used $R$ to refer to a set of positive and/or negative duples, i.e. a set of duples each with its own sign (signed duples). We use $R^{+}$ and $R^{-}$ for the positive and the negative duples in the set so $R = R^{+} \cup R^{-}$.

A model $A$ over the constants $C$ is ``freer or as free" then model $B$ over the constants $K \subseteq C$ if for every duple $r$ over $K$, $B \models r^{-} \Rightarrow A \models r^{-}$. In other words, 
$Th_0^{-}(B)  \subseteq Th_0^{-}(A)$. We sometimes use the word freer for short to mean freer or as free.

The freest model $F_{C}({R^{+}})$ is the model such that if $F_{C}({R^{+}}) \models r^{+}$ for some duple $r$ then every model of $R^{+}$ also satisfies $r^{+}$. When every model of $R^{+}$ satisfies $r^{+}$ we write $R^{+} \Rightarrow r^{+}$ or $R^{+} \vdash r^{+}$.

We use $A + B$ or $[A \cup B]$ to refer to the model spawned by the atoms of $A$ and the atoms of $B$. 

$\bigtriangledown$ is the ``union'' operator of atoms. The union of atoms is an operation that produces an atom $\phi \bigtriangledown \psi$ such that ${\bf{U}}^c(\phi \bigtriangledown \psi) \equiv {\bf{U}}^c(\phi) \cup {\bf{U}}^c(\psi)$. 

The ``full crossing'', $\square_r M$, of a duple $r = (r_L,r_R)$ in an atomized model $M$ such that $M \models r^{-}$ is a mechanism that creates another model $N \models Th_0^{+}(M) \cup \{r^{+}\}$. To calculate the full crossing, define the sets $H = {\bf{dis}}_M(r)$ and $B = {\bf{L}}^a_M(r_R)$ and obtain the model atomized by the set $N = (M - H) \cup (H\bigtriangledown B)$ where the letter $M$ has been used to represent the set of atoms of model $M$ and where $H\bigtriangledown B \equiv \{\lambda \bigtriangledown \rho : (\lambda \in H) \wedge (\rho \in B) \}$. 

For models $M$ and $N$ we write $M \subseteq N$ if for every atom $\phi \in M$ also holds $\phi \in N$. 

\bibliographystyle{unsrt}
\bibliography{main}
\end{document}